\theoremstyle{plain}
\newtheorem{theorem}                 {Theorem}      [section]
\newtheorem{proposition}  [theorem]  {Proposition}
\newtheorem{corollary}    [theorem]  {Corollary}
\newtheorem{lemma}        [theorem]  {Lemma}
\theoremstyle{definition}
\newtheorem{example}      [theorem]  {Example}
\newtheorem{remark}       [theorem]  {Remark}
\newtheorem{definition}   [theorem]  {Definition}
\newcommand{\be}[1]{\begin{equation}\label{#1}}
   \newcommand{\ee}{\end{equation}}
\numberwithin{equation}{section}
\DeclareMathOperator{\ad}{ad}
\DeclareMathOperator{\spa}{span}
\DeclareMathOperator{\rank}{rank}
\newcommand{\wt}{\widetilde}
\newcommand{\cc}{\mathbb C}  
\newcommand{\rr}{\mathbb R}  
\newcommand{\T}{\mathrm T} 
\newcommand{\TT}{\mathfrak T} 
\newcommand{\A}{\mathfrak A} 
\newcommand{\aaa}{\mathfrak a} 
\newcommand{\pa}{\partial}
\newcommand{\dbar}{\ov{\partial}}
\newcommand{\zbar}{\ov{z}}
\newcommand{\half}{\tfrac{1}{2}}
\newcommand{\ibar}{\bar{i}}
\newcommand{\Ibar}{\bar{I}}
\newcommand{\jbar}{\bar{j}}
\newcommand{\ellbar}{\bar{\ell}}
\newcommand{\wc}{\wt{c}} 
\def \al{\alpha}
\def \de{\delta}
\def \ga{\gamma}
\def \Ga{\Gamma}
\def \la{\lambda}
\def \La{\Lambda}
\def \N{\mathbb N}
\def \Z{\mathbb Z}
\def \R{\mathbb R}
\def \C{\mathbb C}
\def \H{\mathcal H}
\def \HH{\underline{\mathcal H}}
\def \Gr{Gr}
\def \g{\mathfrak{g}}
\def \t{\mathfrak{t}}
\def\ip#1#2{\langle#1,#2\rangle}
\def \GL{\mathrm{GL}}
\def \SO{\mathrm{SO}}
\def \O{\mathrm{O}}
\renewcommand{\o}{\mathfrak{o}}
\def \U{\mathrm{U}}
\def \SU{\mathrm{SU}}
\def \u{\mathfrak{u}}
\def \Sp{\mathrm{Sp}}
\def \CP{\mathbb{C}P}
\def \RP{\mathbb{R}P}
\def \gl{\mathfrak{gl}}
\def \d{\mathrm{d}}
\newcommand{\marg}[1]{\marginpar{\footnotesize #1}}
\renewcommand{\marg}[1]{} 
\newcommand{\Cinfty}{{\mathbb C} \cup \infty}	
\newcommand{\CC}{\underline{\mathbb C}} 
\newcommand{\ov}{\overline}
\newcommand{\ul}{\underline}
\newcommand{\alg}{{\rm alg}}
\newcommand{\ii}{\mathrm{i}} 
\newcommand{\M}{\mathcal{M}} 
\newcommand{\Sol}{\mathrm{Sol}} 
\newcommand{\ND}{\text{\rm ND}}
\DeclareMathOperator{\diag}{\mathrm{diag}}
\begin{document}

\bigskip

\title{Harmonic maps into the orthogonal group and null curves}

\author{Maria Jo\~ao Ferreira}
\author{Bruno Ascenso Sim\~oes} 
\author{John C. Wood}

\keywords{harmonic map, null curve, Weierstrass representation, non-linear sigma model}

\subjclass[2000]{53C43, 58E20}

\thanks{The authors thank the Universities of Leeds and Lisbon for hospitality during the preparation of this work, and the Funda\c{c}\~ao para a Ci\^encia e Tecnologia, Portugal and London Mathematical Society for partial financial support.} 

\address
{(MJF and BAS) Centro de Matem\'atica e Applica\c{c}\~oes Fundamentais,  Faculdade de Ci\^encias da Universidade de Lisboa, Campo Grande, 1749-016 Lisboa, Portugal}
\email{(MJF) mjferreira@fc.ul.pt;
(BAS) a{\!\_}s{\!\_}bruno@hotmail.com}

\address
{(JCW) School of Mathematics, University of Leeds, Leeds LS2 9JT, Great Britain}
\email{j.c.wood@leeds.ac.uk}

\begin{abstract}  
We find algebraic parametrizations of extended solutions of harmonic maps of finite uniton number from a surface to the orthogonal group
$\mathrm O(n)$ in terms of free holomorphic data which lead to formulae for all such harmonic maps.  Our work reveals an interesting correspondence between certain harmonic maps and the free Weierstrass representation of null curves and minimal surfaces in 3- and 4-space.
\end{abstract}

\maketitle

\section{Introduction}

\emph{Harmonic maps} are smooth maps between Riemannian manifolds which extremize the `Dirichlet' energy integral (see, for example, \cite{eells-lemaire, urakawa-book}).
They include many interesting classes of mappings, including \emph{geodesics}, \emph{minimal submanifolds} and \emph{harmonic functions}.
Harmonic maps from surfaces to Lie groups and their symmetric spaces are of particular interest, as they admit an integrable systems formulation in terms of \emph{extended solutions},
and they constitute the \emph{chiral} or \emph{non-linear $\sigma$-model} of particle physics, see for example \cite{zakrzewski-book}. 

We give an algorithm (Theorem \ref{th:gen}) which determines, inductively, algebraic parametrizations of extended solutions of harmonic maps of finite uniton number from a surface to the orthogonal group
$\O(n)$ in terms of free holomorphic data; this determines all such harmonic maps.
In contrast to previous work, e.g.\ \cite[\S 6]{unitons}, the holomorphic data is \emph{free}. 
 The parametrizations involves no integration: to avoid that, the algorithm replaces the initial choice of data by new data; this gives \emph{global formulae} for the parametrizations. These formulae determine all harmonic maps locally by choosing the free holomorphic data to be meromorphic functions on open subsets of $M$.  There are two important cases where all extended solutions, and so harmonic maps, are determined globally by our formulae:
 
 (i) \emph{$S^1$-invariant extended solutions for harmonic maps into
$\O(n)$.}  These relate to 
harmonic maps which which arise from \emph{twistor constructions}; these have extended solutions which are invariant under the natural $S^1$-action of C.-L. Terng, see \cite[\S 7]{uhlenbeck}.  An early twistor construction was that of 
E.~Calabi who gave \cite{calabi-quelques, calabi-JDG} a construction of all harmonic maps from the $2$-sphere to real projective spaces or spheres in terms of \emph{totally isotropic holomorphic maps}.   We give a correspondence (Theorem \ref{th:11-appl}) between $S^1$-invariant extended solutions for harmonic maps into $\O(n)$ of maximum uniton number and such totally isotropic holomorphic maps, and so, harmonic maps to spheres.  Using our algorithm, we can give totally explicit global formulae for all these objects (Theorem \ref{th:S1-11}). 

(ii) \emph{The case $n \leq 6$.}  In \S \ref{sec:exs}, by modifying our algorithm in some cases (see, for ex \S \ref{subsec:n=6}(c)), we find global formulae for all harmonic maps of finite uniton number and their extended solutions from a surface to $\O(n)$.  Our formulae have the following interesting application:

A \emph{null curve} is a holomorphic (or meromorphic) map from a surface to $\C^n$ whose derivative is null (isotropic).  The real part of a null curve is a \emph{minimal surface} in $\R^n$ and all minimal surfaces are given that way, locally.   As well as the usual Weierstrass representation involving integration, K.~Weierstrass \cite{weierstrass} gave a formula for such null curves in $\C^3$, called the \emph{free Weierstrass representation}; M.~de~Montcheuil \cite{montcheuil} gave a similar formula for $\C^4$, thus giving (locally) all minimal surfaces in $\R^3$ and $\R^4$ without integration.
Our parametrizations for $n=5,6$ lead to correspondences between certain extended solutions for harmonic maps into $\O(n)$ and null curves
(Theorems \ref{th:Weier3} and \ref{th:Weier4}), where the free Weierstrass data appear very simply in a matrix giving the extended solution.

The starting point is the seminal work of K.~Uhlenbeck \cite{uhlenbeck} who, by introducing a \emph{spectral parameter} $\la$, showed that all harmonic maps from a surface to the unitary group
$\U(n)$ can be obtained, locally at least, from certain maps into its loop group $\Omega\U(n)$, namely
the \emph{extended solutions} mentioned above.  If there is an extended solution polynomial in $\la$, the harmonic map is said to be of \emph{finite uniton number}; all harmonic maps from a compact Riemann surface with a globally defined extended solution, and so all harmonic maps from the $2$-sphere, are of finite uniton number.  Further, Uhlenbeck gave a factorization of a polynomial extended solution into certain linear factors called
\emph{unitons}.  Using the Grassmannian model of the loop group, G.~Segal \cite{segal} showed how to
represent an extended solution by a subbundle $W$ of a trivial bundle with 
fibre a Hilbert space, and showed how to find uniton factorizations from a certain 
natural filtration of $W$. This was put into a general framework in \cite
{unitons}, which led to formulae for uniton factorizations including those of
\cite{dai-terng,ferreira-simoes-wood} (which had been found by different methods).  The minimum number of unitons needed to obtain a given harmonic map is called its \emph{uniton number}.

In \cite{burstall-guest}, a different approach was taken by F.E.~Burstall and M.A.~Guest using a finer classification than that given by uniton number based on a Bruhat decomposition of the algebraic loop group.  This reduced the problem of finding harmonic maps of finite uniton number and their extended solutions into a compact Lie group to solving a sequence of ordinary differential equations in the Lie \emph{algebra}, amounting to successive integrations.  They also solve the corresponding equations in the Lie \emph{group} $\U(n)$ in some special cases of low dimension.
  
Now any compact Lie group can be embedded in $\U(n)$, but this imposes conditions on the data so that it can be hard to find, cf.\ \cite[\S 6]{unitons}.  Using the framework of \cite{burstall-guest}, we solve this problem for $\O(n)$ and give an algorithm which is inductive on dimension, finding formulae for extended solutions for the group $\O(n)$ from those for $\O(n-2)$  to end up with algebraic formulae for all harmonic maps of finite uniton number and their extended solutions from a surface to $\O(n)$ of finite uniton number in terms of free holomorphic data.  Our method is to interpret the extended solution equations in the Lie group and replace the initial data of Burstall and Guest, which had to be integrated in \cite{burstall-guest}, by data which gives the solution by differentiation and algebraic operations.

Note that it does not seem easy to extend our method to general compact Lie groups; however, a modification of our method has been developed for the symplectic group \cite{oliver} where harmonic maps and extended solutions were found in \cite{pacheco-sympl}, but with constrained holomorphic data.

The authors thank Fran Burstall, Joe Oliver, Rui Pacheco, Martin Svensson and the referee for some useful comments on this paper.

\section{preliminaries} \label{sec:prelims}

\subsection{Harmonic maps into a Lie group} \label{subsec:Lie-groups}
We recall the basic theory of harmonic maps from Riemann surfaces to Lie groups and symmetric spaces. 
 Throughout this paper, all manifolds, bundles, and structures on them, will be taken to be
$C^{\infty}$-smooth, and all manifolds will be without boundary.
\emph{Throughout this paper $M$ will denote a Riemann surface}, i.e., a 
connected $1$-dimensional complex manifold, equivalently a (smooth) oriented $2$-dimensional manifold with a conformal structure.
Since harmonicity of a map from a $2$-dimensional manifold only depends on the conformal structure
\cite[\S 4B]{eells-sampson} (see also, for example,
\cite[\S 1.2]{wood60}), the concept of harmonicity for a map from a Riemann surface is well defined.
 
 In the case of maps from a Riemann surface $M$
to a Lie group $G$, we can formulate the harmonicity equations in the following way \cite{uhlenbeck,guest-book}.
For any smooth map $\varphi:M\to G$, set $A^{\varphi}=\half \varphi^{-1}\d\varphi$;
thus $A^{\varphi}$ is a $1$-form with values in the Lie algebra $\g$ of $G$;
in fact, it is half the pull-back of the Maurer--Cartan form of $G$.
Now, any compact Lie group can be embedded in the unitary group
$\U(n)$; such an embedding is totally geodesic.  {}From the composition law
\cite[\S 5A]{eells-sampson}, a smooth map into a totally geodesic submanifold $N$ of a Riemannian manifold $P$ is harmonic into $N$ if and only if it is harmonic as a map into $P$; thus it is natural to first consider harmonic maps into $\U(n)$.  
Let $\CC^n$ denote the trivial complex bundle $\CC^n = M \times \C^n$, then
$D^{\varphi} = \d+A^{\varphi}$
defines a unitary connection on $\CC^n$.
We decompose $A^{\varphi}$ and $D^{\varphi}$ into $(1,0)$- and $(0,1)$- parts; explicitly, in a (local complex) coordinate domain
$(U,z)$,  writing
$\d\varphi = \varphi_z \d z + \varphi_{\zbar}\d\zbar$,
$A^{\varphi} = A^{\varphi}_z \d z +  A^{\varphi}_{\zbar} \d\zbar$,
$D^{\varphi} = D^{\varphi}_z \d z + D^{\varphi}_{\zbar} \d\zbar$,
$\pa_z = \pa/\pa z$ and $\pa_{\zbar} = \pa/\pa\zbar$, we have
\be{type-decomp}
A^{\varphi}_z=\half \varphi^{-1}\varphi_z\,,\quad A^{\varphi}_{\zbar}=\half \varphi^{-1}\varphi_{\zbar}\,, \quad
D^{\varphi}_z = \pa_z + A^{\varphi}_z \,,\quad
D^{\varphi}_{\zbar} = \pa_{\zbar} + A^{\varphi}_{\zbar} \,.
\ee

By the \emph{(Koszul--Malgrange) holomorphic structure \cite{koszul-malgrange} induced by $\varphi$} we mean the unique holomorphic structure on $\CC^n$ with $\dbar$-operator given on each coordinate domain $(U,z)$ by $D^{\varphi}_{\zbar}$\,;
we denote the resulting holomorphic vector bundle by $(\CC^n, D^{\varphi}_{\zbar})$.   
Uhlenbeck \cite{uhlenbeck} provided the following nice formulation of harmonicity: \emph{a smooth map $\varphi:M\to G$ is harmonic if and only if, on each coordinate domain, $A^{\varphi}_z$ is a holomorphic endomorphism of the holomorphic vector bundle $(\CC^n, D^{\varphi}_{\zbar})$}. 
We call harmonic maps $\varphi$ and $\wt{\varphi}$ with
$\wt{\varphi} = g\varphi$ for some $g \in \U(n)$ \emph{(left-)equivalent}; if $\varphi$ is replaced by an equivalent harmonic map $\wt{\varphi}$,  then all the quantities in \eqref{type-decomp} are unchanged. 

Let $\N = \{0,1,2,\ldots\}$. For any $N \in \N$ and $k \in \{0,1,\ldots,N\}$, let $G_k(\C^N)$ denote the Grassmannian of
$k$-dimensional subspaces of $\C^N$; it is convenient to write
 $G_*(\C^N)$ for the disjoint union
$\cup_{k=0,1,\ldots,N}G_k(\C^N)$.
We shall often identify, without comment, a smooth map
$\varphi:M \to G_k(\C^N)$ with the rank $k$ subbundle 
of $\CC^N = M \times \C^N$ whose fibre at $p \in M$ is $\varphi(p)$; we denote this subbundle also by $\varphi$, not underlining
this as in, for example, \cite{burstall-wood, ferreira-simoes, ferreira-simoes-wood}.

For a subspace $V$ of $\C^n$ we denote by $\pi_V$
(resp.\ $\pi_V^{\perp}$) orthogonal projection from $\C^n$ to $V$
(resp.\ to its orthogonal complement $V^{\perp}$); we use the same notation for orthogonal projection from $\CC^n$ to a subbundle.
The \emph{Cartan embedding} \cite[p.~66]{cheeger-ebin} of the complex Grassmannian is given by 
\begin{equation} \label{cartan}
\iota:G_*(\C^n)\hookrightarrow \U(n),\quad \iota(V)=\pi_V-\pi_{V}^{\perp}\,;
\end{equation}
this is totally geodesic, and isometric up to a constant factor.
We shall identify $V$ with its image $\iota(V)$; since $\iota(V^{\perp}) = -\iota(V)$,
this identifies $V^{\perp}$ with $-V$.

\subsection{Extended solutions and the Grassmannian model} \label{subsec:extd-solns}

Let $G$ be a compact connected Lie group with complexification $G^{\cc}$; denote the corresponding Lie algebras by
$\g$ and $\g^{\cc} = \g \otimes \C$.

For any Lie group, we define the \emph{free} and \emph{based loop groups} by
$\La G = \{\gamma:S^1 \to G : \ga \text{ smooth}\}$ and
$\Omega G = \{\ga \in \La G : \ga(1) = e\}$, respectively, where $e$ denotes the identity of $G$; their corresponding Lie algebras $\La\g$ and $\Omega\g$ are similarly defined.
By an \emph{extended solution} \cite{uhlenbeck} we mean a smooth map
$\Phi:M \to \Omega G$ from a (Riemann) surface which satisfies
$\Phi^{-1}\Phi_z = (1-\la^{-1})A$ on each coordinate domain $(U,z)$ for some map $A:U \to \g^{\cc}$.
We frequently write $\Phi_{\la}(z) = \Phi(z)(\la)$ \ ($z \in M$, $\la \in S^1$). 
Given an extended solution $\Phi:M \to \Omega G$, for any $g \in G$, $\varphi = g \Phi_{-1}$ is harmonic with the $A^{\varphi}_z$ of \eqref{type-decomp} equal to the $A$ just defined; $\varphi$ and $\Phi$ are said to be \emph{associated} to each other.
Any harmonic map on a \emph{simply connected} domain has an associated extended solution.
Any two extended solutions $\Phi$ and $\wt\Phi$ associated to the same or equivalent harmonic map are related by a loop: $\wt\Phi = \eta\Phi$ where $\eta \in \Omega G$: we shall say that such extended solutions are \emph{equivalent}; we are interested in finding harmonic maps and extended solutions up to equivalence. 

We specialize to $G=\U(n)$ with complexification
$G^{\cc} = \GL(n,\C)$ and corresponding Lie algebras
$\g=\u(n)$ and $\g^{\cc} = \gl(n,\C)$. 
Define the \emph{algebraic loop group} to be the subgroup $\Omega_{\alg}\U(n)$ of those $\gamma \in \Omega\U(n)$ given by finite Laurent (i.e., Fourier) series:
$\gamma = \sum_{i=s}^t \lambda^k S_k$ where $s \leq t$ are integers and the $S_k$ are $n \times n$ complex matrices, and define
$\La_{\alg}\U(n)$ similarly.  We say that $\Phi$ has \emph{finite uniton number}
if it is a map from $M$ to
$\Omega_{\alg}\U(n)$; more precisely, the uniton number is defined to be $t-s$ assuming $S_s$ and $S_t$ are non-zero. For $r \in \N$, let
$\Omega_r\U(n)$ denote the set of polynomials of degree at most $r$:
\be{Omega_r}
\Omega_r\U(n) = \big\{\ga \in \Omega_{\alg}\U(n): \ga = \sum_{k=0}^r \lambda^k S_k, \quad S_k \in \gl(n,\C) \big\}.
\ee
 
Following \cite{uhlenbeck} a harmonic map $\varphi:M \to \U(n)$ is said to be of \emph{finite uniton number} if it has an associated polynomial extended solution $\Phi:M \to \Omega_r\U(n)$.  
Then the \emph{($\U(n)$) (minimal) uniton number of\/ $\varphi$} is the minimum degree of
such a $\Phi$.  Any harmonic map from a \emph{compact} surface $M$ to $\U(n)$ which has an associated extended solution defined on the whole of $M$ is of finite uniton number at most $n-1$
\cite{uhlenbeck}; in particular, this applies to any harmonic map from $S^2$.

Now let $\H = \H^{(n)}$ denote the Hilbert space
$L^2(S^1,\C^n)$. By expanding into Fourier series, we have
$$\H = \text{ linear closure of }
	\spa\{\lambda^i e_j\ :\ i\in\Z,\ j=1,\ldots,n\},
$$
where $\{e_1=(1,0,0,\ldots,0),\, 
e_2=(0,1,0,\ldots,0), \ldots, e_n = (0,0,\ldots,0,1)\}$ is the standard basis
 for $\C^n$.
Thus, elements of $\H$ are of the form $v = \sum_i \lambda^i v_i$ where each
$v_i \in \C^n$.
If $w = \sum_i \lambda^i w_i$ is another element of $\H$, its
$L^2$ inner product with $v$ is given by $\ip{v}{w} = \sum_i v_i \ov w_i$.
The natural action of $\U(n)$ on $\C^n$ induces an action on $\H$ which is isometric with respect to this $L^2$ inner product.  We consider the closed subspace 
$$
\H_+= \H_+^{(n)} = \text{ linear closure of } \spa\{\lambda^i e_j\ :\ i \in \N,\ j=1,\dots,n\}.
$$
The action of $\Omega\U(n)$ on $\H$ induces an action on subspaces of $\H$; denote by $\Gr = \Gr^{(n)}$ the orbit of $\H_+$ under that action, see \cite{pressley-segal} for a description of that orbit.
The action gives a bijective map
\begin{equation} \label{Grass-model}
\Omega\U(n)\ni\Phi \mapsto W := \Phi\H_+\in\Gr.
\end{equation}
We will sometimes write $W_{\la} = \Phi_{\la}\H_+$ when we need to consider dependence on $\la \in S^1$.
Note that $W = \Phi\H_+$ is `shift-invariant', i.e., closed under multiplication by $\la$, indeed $\la W = \Phi\la\H_+ \subset \Phi\H_+ = W$, so that $\Phi$ gives an isomorphism between $\H_+/\la\H_+ \cong \C^n$ and
$W/\la W$.  

The map \eqref{Grass-model} restricts to a bijection from the
algebraic loop group $\Omega_{\alg}\U(n)$ to the
set of $\lambda$-closed subspaces $W$ of $\H$
satisfying $\lambda^r\H_+ \subset W\subset\lambda^{s}\H_+\,$
for some integers $r \geq s$; it further restricts to a bijection
from $\Omega_r\U(n)$ to the subset $\Gr_r \subset \Gr$ of those $\lambda$-closed subspaces $W$ of $\H$ satisfying
\be{W-finite}
\lambda^r\H_+ \subset W \subset \H_+\,.
\ee

Now let $\Phi:M \to \Omega\U(n)$ be a smooth map and set $W =\Phi\H_+:M \to \Gr$. 
We can regard $W$ as a subbundle of the trivial bundle $\ul{\H}:= M \times \H$.   Then
G.\ Segal \cite{segal} showed that $\Phi$ is an extended solution if and only if $W$ satisfies two conditions:
\be{W-extd}
\left\{ \quad
 \begin{matrix} \text{(i)} & W \text{ is holomorphic subbundle of
$\ul{\H}$\,, i.e., }
\pa_{\zbar} (\Gamma(W)) \subset \Gamma(W), \\
\text{(ii)}  & \Gamma(W)\text{ is closed under the operator } \lambda \pa_z\,, \text{ i.e., }
\lambda \pa_z \Gamma(W) \subset \Gamma(W). \end{matrix}
\right. 
\ee
Here $\Ga(\cdot)$ denotes the space of smooth sections.
We call $W=\Phi\H_+$ the \emph{Grassmannian model} of the extended solution $\Phi$.
The assignment $\Phi \mapsto W=\Phi\H_+$ induces a one-to-one correspondence between polynomial extended solutions
$\Phi:M \to \Omega_r\U(n)$ and smooth maps $W:M \to Gr_r$ satisfying \eqref{W-extd}.

\subsection{Complex extended solutions}
\label{subsec:cx-extd-solns}
Let $\La^+\U(n)^{\cc}$ (resp. $\La^*\U(n)^{\cc}$) denote the subgroup of $\La\U(n)^{\cc}$ consisting of smooth maps
$S^1 \to \U(n)^{\cc} = \GL(n,\C)$ which extend holomorphically to $\{\la \in \C: |\la| < 1\}$ (resp.\ $\{\la \in \C: 0 < |\la| < 1\}$);
$\La^+\u(n)^{\cc} = \La^+\gl(n,\C)$ is similarly defined.Following \cite{burstall-guest}, by a \emph{complex extended solution} we mean a smooth map $\Psi:M \to \La^*\U(n)^{\cc}$ which satisfies, on each coordinate domain $(U,z)$,
\be{cx-extd}
\la\Psi^{-1}\Psi_z \in \La^+\u(n)^{\cc},
\ee
and is holomorphic with respect to the complex structure induced from $\U(n)^{\cc} = \GL(n,\C)$, i.e., for fixed $\la$,
 the entries of $M \ni z \mapsto \Psi(z)(\la) \in \U(n)^{\cc}$ are holomorphic.
Recall \cite[Theorem 8.11]{pressley-segal} that the product map
$\Omega \U(n) \times \La^+\U(n)^{\cc} \to \La\U(n)^{\cc}$ is a diffeomorphism.  This gives the \emph{Iwasawa decomposition} or \emph{loop group factorization} of $\La\U(n)^{\cc}$ as the product of the two given factors.  It also gives an identification between $\Omega\U(n)$
and the homogeneous space $\La\U(n)^{\cc}/\La^+\U(n)^{\cc}$; thus $\Omega\U(n)$ acquires the structure of a complex manifold. {}From \cite{dorfmeister-pedit-wu}, given a complex extended solution $\Psi$, its projection $\Phi = [\Psi]$ onto $\Omega \U(n)$ is an extended solution; note that this is holomorphic with respect to the complex structure just defined. 
Further, the corresponding Grassmannian model $W=\Phi\H_+$ is also given by 
$W=\Psi\H_+$.
Conversely, as in \cite{dorfmeister-pedit-wu, burstall-guest}, any extended solution $\Phi$ is locally the projection of a complex extended solution.

More generally, we shall say that a meromorphic map $\Psi:M \to \La^*\U(n)^{\cc}$ is a
\emph{meromorphic complex extended solution} if it is a complex extended solution away from its poles.  Then we can extend $W=\Psi\H_+$, and so $\Phi=[\Psi]$, smoothly over the poles: indeed the columns of $\Psi$ give meromorphic sections of $W$ which span $W \!\!\mod \la W$, i.e., writing
$Y$ for the span of the columns of $\Psi$ so that $Y=\Psi(\C^n)$, then $W = \sum_{i=0}^{\infty} \la^i Y$.  
Note that $Y$, and so $W$, extend as in \cite[Lemma 4.1(ii)]{unitons}; in fact,
the columns of $\Psi$ form a \emph{meromorphic basis} for $Y$,
cf. \cite[\S 7]{dai-terng}. 
We will continue to write
$\Phi =[\Psi]$ for the projection of $\Psi$ onto $\Omega\U(n)$ even when $\Psi$ is meromorphic.

The process of finding $\Phi$ explicitly from $\Psi$ can be tricky in the general case;
however, in the finite uniton number case, $\Phi$ can be found explicitly from $W$ by the formulae in \cite{unitons}, see the next section.
Conversely, \emph{given an extended solution $\Phi:M \to \Omega\U(n)$ of finite uniton number (i.e., with values in $\Omega_{\alg}\U(n)$), 
there is a meromorphic complex extended solution $\Psi:M \to \La^*\U(n)^{\cc}$ with
$\Phi = [\Psi]$}; this follows from Proposition \ref{prop:Psi-form} below.

\subsection{Uniton factorizations from extended solutions} \label{subsec:uniton-fact}

Let $\varphi : M \to \U(n)$ be a harmonic map.  K.\ Uhlenbeck called a subbundle $\al$ of $\CC^n$ a \emph{uniton} (for $\varphi$) if
(i) $\al$ is holomorphic with respect to the Koszul--Malgrange holomorphic structure induced
by $\varphi$, i.e., 
$D^{\varphi}_{\zbar}(\sigma) \in \Ga(\al)$ for all
$\sigma \in \Ga(\al)$; and 
(ii) $\al$ is closed under the endomorphism $A^{\varphi}_z$, i.e.,
$A^{\varphi}_z(\sigma) \in \Ga(\al)$ for all
$\sigma \in \Ga(\al)$.
She showed \cite{uhlenbeck} that given a harmonic map $\varphi$ and a uniton $\alpha$, the product $\wt{\varphi} = \varphi(\pi_{\al} - \pi_{\al}^{\perp})$ gives a new harmonic map, a process she called \emph{adding a uniton}.
If $\Phi$ is an extended solution, we say that $\alpha$ is a uniton for $\Phi$ if it is a uniton for any associated harmonic map $\varphi = g\Phi_{-1}$ \ $(g \in \U(n))$; then we have
\cite[Corollary 12.2]{uhlenbeck}: 
\emph{given an extended solution $\Phi:M \to \Omega\U(n)$,
a subbundle $\al$ of\/ $\CC^n$ is a uniton for $\Phi$ 
 if and only if $\wt{\Phi} = \Phi(\pi_{\al} + \la\pi_{\al}^{\perp})$ is an extended solution}.

 Let $\Phi:M \to \Omega_r\U(n)$ be a \emph{polynomial} extended solution (see \S \ref{subsec:extd-solns}).  By a \emph{uniton factorization} of $\Phi$
we mean a product:
\be{fact}
\Phi = (\pi_{\al_1} + \la\pi_{\al_1}^{\perp}) \cdots
(\pi_{\al_r} + \la\pi_{\al_r}^{\perp})
\ee
where each $\al_i$ is a uniton for the \emph{partial product}
$\Phi_{i-1} = (\pi_{\al_1} + \la\pi_{\al_1}^{\perp}) \cdots (\pi_{\al_{i-1}} + \la\pi_{\al_{i-1}}^{\perp})$; here we set $\Phi_0 = I$.
Uhlenbeck \cite{uhlenbeck} proved that any polynomial extended solution has a uniton factorization.
A tool for finding uniton factorizations was proposed by Segal \cite{segal}, namely
that they are equivalent to certain filtrations; this was developed in \cite{unitons} where the following terminology was introduced:  Let $\HH_+$ denote the trivial bundle $M \times \H_+$\,.
By a \emph{$\la$-filtration $(W_i)$ of\/ $W$} we mean a nested sequence
$$
W = W_r \subset W_{r-1} \subset \cdots \subset W_0 = \HH_+
$$
of $\la$-closed subspaces of $\HH_+$ with 
$\la W_{i-1} \subset W_i \subset W_{i-1}$ \
$(i = 1, \ldots, r)$.
Two examples of $\la$-filtrations are the
\emph{Segal filtration} $(W_i^S$) \cite{segal} and the
\emph{Uhlenbeck filtration} $(W_i^U)$
\cite[\S 2.2]{uhlenbeck} given by $W_i^S = W + \la^i\HH_+$ and $W_i^U = (\la^{i-r}W) \cap \HH+$.
These are obtained by applying the following steps (called \emph{$\la$-steps} in \cite{unitons}) for $i = r,r-1,\ldots,2,1$, starting with $W_r^S = W_r^U = W$:
\be{segal-uhl}
W_{i-1}^S = W_i^S + \la^{i-1}\HH_+ \quad \text{and} \quad
W_{i-1}^U = (\la^{-1}W_i^U) \cap \HH_+ 
= (\la^{-1}W_i^U) \cap \HH_+ + \la^{i-1}\HH_+\,.
\ee
If we apply these steps alternately, we get a filtration called an \emph{alternating filtration}
\cite[Example 4.5]{unitons}. Starting with an Uhlenbeck step
on $W =W_r$\,, this is given by
\be{alt-fact}
W_{r-2k+1} = \la^{-k}W \cap \HH_+ + \la^{r-2k+1}\HH_+\,,
\quad
W_{r-2k} = \la^{-k}W \cap \HH_+ + \la^{r-2k}\HH_+ \quad (k=1,2,\ldots).
\ee

Let $W = \Phi\H_+$ for an extended solution $\Phi$ and let $(W_i)$ be a $\la$-filtration of $W$.  
Then \cite[\S 3]{unitons} the $W_i$ satisfy \eqref{W-extd} so $W_i = \Phi_i\H_+$ for some extended solution $\Phi_i$.  
Let $P_0:\H_+ \to \C^n$ denote evaluation at $\la=0$, i.e., $P_0(\sum \la^i L_i) = L_0$. Then \cite[Proposition 2.3]{unitons}, \emph{setting
\be{alpha}
\al_i = P_0\Phi_{i-1}^{-1}W_i \qquad (i=1,2,\ldots,r)
\ee
gives a uniton factorization \eqref{fact} with partial products given by the $\Phi_i$}\,; all uniton factorizations are given this way
\cite[\S 3]{unitons}. 
The formula \eqref{alpha} gives \emph{explicit} formulae for any uniton factorization; these include the  formulae of \cite{dai-terng,ferreira-simoes-wood} for the Segal and Uhlenbeck factorizations.  Applying \eqref{alpha} to the alternating filtration gives the \emph{alternating factorization} which has the useful property in the $\O(n)$ case
that adjacent unitons combine to give real quadratic factors, see \cite[\S 6.1]{unitons}. We shall use this factorization in \S \ref{subsec:r=1,2}ff. 
  
\subsection{Maps into complex Grassmannians and $S^1$-invariant maps}\label{subsec:cx-Grass}

Recall the Cartan embedding \eqref{cartan}.  Let $\Phi$ be an extended solution and set $W = \Phi\H_+$.
Then $\Phi$ satisfies the symmetry condition: 
\be{Phi-Grass}
\Phi_{\la}\Phi_{-1} = \Phi_{-\la} \qquad (\la \in S^1) 
\ee
if and only if $W_{-\la} = W_{\la}$ \ $(\la \in S^1)$\,.
 In this case, the corresponding harmonic map $\varphi = \Phi_{-1}$
satisfies $\varphi^2 = I$ and so is a (harmonic) map into a complex Grassmannian $G_*(\C^n)$; conversely, it follows from \cite[\S 15]{uhlenbeck} that 
\emph{any harmonic map $\varphi:M \to G_*(\C^n)$ of finite uniton number
 is of the form $\varphi = \Phi_{-1}$ for some polynomial extended solution $\Phi$} satisfying
\eqref{Phi-Grass}, see
\cite[\S 5.1]{unitons} where bounds on the degree of $\Phi$ are given.
See \cite{ferreira-simoes} for more information and explicit formulae.

As a special case of the above, an extended solution $\Phi:M \to \Omega\U(n)$ is called \emph{$S^1$-invariant} if
\be{Phi-S1-invt}
\Phi_{\la\mu} = \Phi_{\la}\Phi_{\mu} \quad (\la,\mu \in S^1),
\ee
equivalently, $W=\Phi\H_+$ satisfies $W_{\la\mu} = W_{\la}$ \ $(\la,\mu \in S^1)$\,.
Note that this implies \eqref{Phi-Grass}, so that $\Phi_{-1}$ is a harmonic map into a complex Grassmannian.
In fact (cf.\ \cite[Proposition 2.10]{unitons}),
\emph{an extended solution $\Phi$ is $S^1$-invariant if and only if it
 has a uniton factorization \eqref{fact} with nested unitons}:
\be{nested}
0=\al_0 \subset \al_1 \subset \al_2 \subset \cdots \subset \al_r \subset \al_{r+1} = \CC^n
\ee
 \emph{for some $r$}.  Further, the $\al_i$ are holomorphic subbundles of $\CC^n$ which form a
 \emph{superhorizontal sequence} (see, for example \cite[Definition 3.13]{unitons}), i.e.,
for all $i \in \{0,1,\ldots, r\}$, $\pa_z(s) \in \Ga(\al_{i+1})$ for all $s \in \Ga(\al_i)$.
The corresponding Grassmannian model $W =\Phi\H_+$ is given by
\be{Grass-S1}
W = \al_1 + \la \al_2 + \ldots + \la^{r-1} \al_r + \la^r \HH_+\,,
\ee
 and the corresponding harmonic map $\varphi = \Phi_{-1}$ is the map into a complex Grassmannian given by
\be{phi-S1-invt}
 \varphi = \sum_{i=0}^{[r/2]}\psi_{2i} \quad \text{where}
\quad \psi_i = \al_i^{\perp} \cap \al_{i+1} \quad (i=0,1,\ldots, r) \,.
\ee
The map $(\psi_i) \mapsto \varphi$ can be interpreted as a \emph{twistor fibration}, see \cite[\S 3]{burstall-guest} and \cite{burstall-rawnsley} for the general theory, \cite{twistor} for further constructions, and \S \ref{subsec:On-1} for the real case.
 
\emph{An example of an $S^1$-invariant extended solution
with $r=n-1$ is given by
setting $\al_i =$ the $(i-1)$th associated curve $f_{(i-1)}$ \cite[Definition 4.2]{unitons} of a full holomorphic map $f:M \to \CP^{n-1}$.}

\subsection{The method of Burstall and Guest for $\U(n)$}
\label{subsec:BuGu}

The starting point for the theory in \cite{burstall-guest} is a finer classification than that provided by uniton number by using `canonical elements':
Let $G$ be a compact connected Lie group with complexification $G^{\cc}$; denote the corresponding Lie algebras by
$\g$ and $\g^{\cc} = \g \otimes \C$.
Let $\de_1, \ldots, \de_{\ell}$ be a choice of simple roots
for some Cartan subalgebra $\t$.
Then a \emph{canonical element} (for $\g$) \cite{burstall-guest,burstall-rawnsley} is an element $\xi \in \t$ such that $\de_j(\xi) = 0$ or $\ii\ (\,= \sqrt{-1})$ for all $j$.
The eigenvalues of $\ad \xi$ are of the form $\ii k$ where $k$ is an integer with $-r \leq k \leq r$ where $r=r(\xi) = \max\{k : \g_k(\xi) \neq 0\}$;
we define $\g_k=\g_k(\xi)$ to be the corresponding eigenspace; we then have
$\g^{\cc} = \sum_{k=-r}^r \g_k$\,.

We now apply this to $\u(n)$:
we shall denote the eigenspace $\g_k(\xi)$ of $\ad\xi$ in $\u(n)^{\cc}=\gl(n,\C)$ by $\g_k^{\cc} = \g_k^{\cc}(\xi)$ to distinguish it from the $\o(n)$ case in \S \ref{subsec:On}.
According to \cite[Proposition A1]{burstall-eschenburg-ferreira-tribuzy}, the canonical elements of $\u(n)$ are of the form 
$\xi = \ii\diag(\xi_1+\la_0,\ldots,\xi_n+\la_0)$ where $\la_0 \in \R$ and
the $\xi_i$ are non-negative integers satisfying
\be{xi}
\xi_i - \xi_{i+1} = 0 \text{ or }1, \quad \xi_n = 0.
\ee
Note that this implies that $\xi_1 = r(\xi)$.
As in \cite[p.~562]{burstall-guest}, essentially by considering the centreless group
$\U(n)/Z(\U(n))$, we may take $\la_0 = 0$, so that
\emph{by a canonical element of\/ $\Omega_r\U(n)$ we mean a diagonal matrix
$\xi = \ii\diag(\xi_1,\ldots,\xi_n)$ where the\/ $\xi_i$ are non-negative integers satisfying \eqref{xi}}.
We have a corresponding \emph{canonical geodesic}
$\ga_{\xi}:S^1 \to \U(n)$ defined by
$\ga_{\xi}(\la) = \diag(\la^{\xi_1},\ldots,\la^{\xi_n})$, thus
$\ga_{\xi} \in \Omega_r\U(n)$.

The canonical element $\xi$ is determined by the $(r+1)$-tuple
$(t_0,t_1,\ldots,t_r)$ of positive integers where $t_j := \#\{i:\xi_i = j\}$; 
we call $(t_0,t_1,\ldots,t_r)$ the \emph{type} of $\xi$.
Note that $\sum_{j=0}^r t_j =n$; we shall see that the type determines the block structure of the $n \times n$-matrices below.  In particular,
$\g_k^{\cc} = \{B = (b_{ij}) \in \gl(n,\C): b_{ij}=0 \ \text{ if} \ \xi_i-\xi_j \neq k\}$, i.e., $\g_k^{\cc}$ consists of matrices with entries zero unless they are on the \emph{$k$th block superdiagonal: $\xi_i-\xi_j = k$} (if $k$ is negative this is below the diagonal).
As in \cite[Corollary 14.4]{uhlenbeck}, $r \leq n-1$; equality is attained by type $(1,1,\ldots,1)$, in which case
$\xi_i = r+1-i$ and $\xi_i-\xi_j = j-i$. The example at the end of \S \ref{subsec:cx-Grass} is of this type.

Write $\La^+_{\alg}\U(n)^{\cc} =  \La^+_{\alg}\GL(n,\C) := \La^+\U(n)^{\cc} \cap \La_{\alg}\U(n)^{\cc}$ and similarly for
$\La^+_{\alg}\u(n)^{\cc} =  \La^+_{\alg}\gl(n,\C)$.
To apply the above to find polynomial extended solutions, and so harmonic maps of finite uniton number into $\U(n)$, we need
\begin{definition} \label{def:A-complex}
Define a finite-dimensional Lie subgroup $\A_{\xi}^{\cc}$ of $\La^+_{\alg}\GL(n,\C)$ by 
\begin{multline*}
\A_{\xi}^{\cc} = 
\{A = (a_{ij}) \in \La^+_{\alg}\GL(n,\C):\\ a_{ij} = \delta_{ij}
\text{ if\/ } \xi_i \leq \xi_j\,,
\text{ otherwise }
a_{ij} \text{ is polynomial in } \la 
\text{ of degree at most } \xi_i-\xi_j-1\}.
\end{multline*}
\end{definition}

In the sequel, $[ \hphantom{\Psi} ]$ denotes the projection $\La\U(n)^{\cc} \to \Omega\U(n)$ onto the first factor in the Iwasawa decomposition of \S \ref{subsec:cx-extd-solns}.

\begin{proposition} \label{prop:Psi-form}
Let\/ $\wt\Phi:M \to \Omega_{\wt r}\U(n)$ be a polynomial extended solution for some $\wt r \in \N$.  Then there is an equivalent extended solution $\Phi:M \to \Omega_r\U(n)$ with
$0 \leq r \leq \wt r$, a canonical element
$\xi =\ii\diag(\xi_1,\ldots,\xi_n)$  of\/ $\Omega_r\U(n)$
and a meromorphic map $A:M \to \A_{\xi}^{\cc}$ such that $\Phi = [A\ga_{\xi}]$.

Further, $A$ and $\xi$ are uniquely determined by $\Phi$.

All harmonic maps $\varphi:M \to \U(n)$ of finite uniton number have such an associated extended solution
$\Phi$\,.  
\end{proposition}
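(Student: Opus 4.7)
The plan is to translate $\wt\Phi$ into its Grassmannian model $W=\wt\Phi\H_+$ and exploit the Birkhoff/Bruhat decomposition of the algebraic loop group. By \eqref{W-finite} and \eqref{W-extd}, $W:M\to\Gr^{(n)}$ is a shift-invariant holomorphic subbundle of $\ul{\H}$ with $\la^{\wt r}\HH_+\subset W\subset\HH_+$. The Pressley--Segal Birkhoff decomposition (cf.\ \cite[\S 8]{pressley-segal} and its use in \cite{burstall-guest}) gives a stratification of $\Gr^{(n)}$ into locally closed cells $\Sigma_\xi$ indexed by canonical elements $\xi$, where $\Sigma_\xi$ is precisely the image of the injection $\A_\xi^{\cc}\ni A\mapsto A\ga_\xi\H_+$. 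The block upper-triangular polynomial shape prescribed in Definition \ref{def:A-complex}, with degree bounds $\xi_i-\xi_j-1$, is tailored so that these images are disjoint and satisfy $A\ga_\xi\H_+\subset\HH_+$.

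For the given holomorphic $W$, the sets $\{z\in M:W(z)\in\Sigma_\xi\}$ stratify $M$. Since $M$ is irreducible, there is a unique canonical $\xi$ for which this set, call it $M^\circ$, is open and dense; on $M^\circ$ the inverse Schubert parametrisation produces a holomorphic $A:M^\circ\to\A_\xi^{\cc}$ with $W=A\ga_\xi\H_+$. A pole-removal argument in the spirit of \cite[Lemma 4.1]{unitons} then extends $A$ meromorphically to all of $M$. Setting $\Psi:=A\ga_\xi$ gives $W=\Psi\H_+$, so the extended solution $\Phi:=[\Psi]$ has Grassmannian model $W=\wt\Phi\H_+$ and is therefore equivalent to $\wt\Phi$ in the sense of \S\ref{subsec:extd-solns}. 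The degree bound $r:=r(\xi)\le\wt r$ follows from the fact that $r=\xi_1$ is the smallest integer with $\la^r\HH_+\subset W$, combined with the inclusion $\la^{\wt r}\HH_+\subset W$.

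Uniqueness of $\xi$ reduces to the disjointness of the cells $\Sigma_\xi$, and uniqueness of $A$ is the injectivity of the Schubert parametrisation $A\mapsto A\ga_\xi\H_+$ on $\A_\xi^{\cc}$. The final assertion of the proposition is immediate since, by \cite{uhlenbeck}, every harmonic map of finite uniton number is associated to some polynomial extended solution, to which the first part of the proposition then applies.

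The delicate step is the precise identification of the cells $\Sigma_\xi$ with $\A_\xi^{\cc}\cdot\ga_\xi\cdot\H_+$: one must verify that the degree bounds $\xi_i-\xi_j-1$ in Definition \ref{def:A-complex} cut out exactly those entries of $A$ producing distinct elements of the Grassmannian, i.e., that any higher-degree term in a candidate $A$ can be absorbed by right multiplication $A\ga_\xi\mapsto A\ga_\xi B$ with $B\in\La^+_{\alg}\GL(n,\C)$ without altering $W=A\ga_\xi\H_+$. This reduces to a direct calculation using the explicit form $\ga_\xi=\diag(\la^{\xi_1},\ldots,\la^{\xi_n})$, comparing the shift in each block, but it is the technical heart of the argument.
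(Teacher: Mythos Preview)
Your outline has the right architecture (pass to the Grassmannian model, use a cell decomposition, extend meromorphically), but there is a genuine gap in the key structural claim. The Birkhoff/Bruhat cells of $\Omega_{\alg}\U(n)$ (equivalently, of $\Gr^{(n)}$) are indexed by \emph{all} cocharacters, i.e.\ by all non-increasing integer $n$-tuples $(\xi_1,\ldots,\xi_n)$, not only by the canonical ones satisfying $\xi_i-\xi_{i+1}\in\{0,1\}$ and $\xi_n=0$. Hence the sets $\A_{\xi}^{\cc}\cdot\ga_{\xi}\cdot\H_+$ with $\xi$ canonical do \emph{not} cover $\Gr^{(n)}$, and your stratification of $M$ by preimages of such $\Sigma_\xi$ can be empty. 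A concrete counterexample: the constant polynomial extended solution $\wt\Phi=\diag(\la^2,1)\in\Omega_2\U(2)$ lies in the cell of the non-canonical tuple $(2,0)$ and is not of the form $[A\ga_\xi]$ for any canonical $\xi$; one must first left-multiply by the constant loop $\diag(\la^{-1},1)$ to obtain the equivalent $\Phi=\diag(\la,1)$ of canonical type $(1,0)$. This is exactly why the statement says ``equivalent'' rather than ``equal''. Relatedly, your concluding sentence ``$\Phi:=[\Psi]$ has Grassmannian model $W=\wt\Phi\H_+$ and is therefore equivalent to $\wt\Phi$'' actually forces $\Phi=\wt\Phi$ by the bijectivity of \eqref{Grass-model}, which is stronger than what is true.

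The step you are missing is precisely the normalisation provided by \cite[Theorem~4.5]{burstall-guest}: given an arbitrary polynomial extended solution $\wt\Phi$, one produces a constant loop $\eta\in\Omega\U(n)$ so that $\Phi:=\eta\wt\Phi$ lands (off a discrete set) in an unstable manifold $U_\xi$ with $\xi$ canonical; this uses the Morse/energy flow on $\Omega_{\alg}\U(n)$ rather than the bare Birkhoff stratification. The paper's proof simply invokes this theorem for existence, and for uniqueness gives, in addition to the citation of \cite{burstall-guest}, the direct degree computation: if $[\wt A\ga_\xi]=[A\ga_\xi]$ then $B:=\ga_\xi^{-1}A^{-1}\wt A\,\ga_\xi\in\La^+\U(n)^{\cc}$, and comparing block degrees forces $B=I$. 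Your injectivity-of-Schubert-parametrisation remark is correct in spirit but would benefit from this explicit calculation.
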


Given a canonical element $\xi$ of type $(t_0,\ldots, t_r)$, we shall say that $A:M \to \A_{\xi}^{\cc}$,  $\Phi = [A \ga_{\xi}]$ and the associated Grassmannian model $W=\Phi\H_+$
are of \emph{canonical type}, specifically, \emph{of type $\xi$}, or \emph{of type $(t_0,\ldots, t_r)$}.
Note that $\Psi=A\ga_{\xi}$ is a meromorphic extended solution with $\Phi=[\Psi]$, see \S \ref{subsec:cx-extd-solns},
and $\Phi$ and $\Psi$ are both polynomial of degree $r$ in $\la$.
 
\begin{proof}
Define a finite-dimensional Lie subalgebra $\aaa_{\xi}^{\cc}$ of
 $\La^+_{\alg}\gl(n,\C)$ by
\begin{multline} \label{u0xi}
\aaa_{\xi}^{\cc} =
\{b = (b_{ij}) \in \La^+_{\alg}\gl(n,\C): \\
b_{ij} = 0 \text{ if\/ } \xi_i \leq \xi_j\,,
\text{ otherwise }
b_{ij} \text{ is polynomial in } \la 
\text{ of degree at most } \xi_i-\xi_j-1\};
\end{multline}
this is the $\u^0_{\xi}$ of \cite[Proposition 2.7]{burstall-guest} for $\g =\u(n)$.
It is the Lie algebra of $\A_{\xi}^{\cc}$ and
the exponential map
$B \mapsto A = \exp B = \sum_{i=0}^{\infty} B^i/i!$
maps $\aaa_{\xi}^{\cc}$ to $\A_{\xi}^{\cc}$. 
{}From \cite[Theorem 4.5 and p.~560]{burstall-guest}, 
given $\wt\Phi$, there is an equivalent extended solution
$\Phi:M \to \Omega_r\U(n)$, canonical element $\xi =\ii\diag(\xi_1,\ldots,\xi_n)$ of $\Omega_r\U(n)$ and discrete subset\/ $D$ of $M$ such that a complex extended solution
$\Psi:M \setminus D \to \La^+_{\alg}\GL(n,\C)$ with
$[\Psi]=\Phi$ is given by $\Psi = A \ga_{\xi}$ where
$A = \exp B$ for some holomorphic map $B:M \setminus D \to \aaa_{\xi}^{\cc}$\,; thus $A$ is a holomorphic map from  $M \setminus D$
to $\A^{\cc}_{\xi}$\,.
Uniqueness of $\xi$ is from the Bruhat decomposition,	 cf.\ \cite[Corollary 2.2]{burstall-guest}; uniqueness of $B$ and so $A$ follows from \cite[Proposition 2.7]{burstall-guest}.
Alternatively, Suppose $[\wt{A}\ga_{\xi}] = [A\ga_{\xi}]$ for $A,\wt{A}:M \setminus D \to \A_{\xi}^{\cc}$. Then $\wt{A}\ga_{\xi} = A\ga_{\xi}B$ for some
$B: M \to \La^+\U(n)^{\cc}$.   Then $B = \ga_{\xi}^{-1}\wt{B}\ga_{\xi}$ where $\wt{B} = A^{-1}\wt{A}$;  the matrix $B$ is the product of block upper-triangular matrices, so is block upper-triangular, i.e.\  
$b_{ij} = \delta_{ij}$ ($\xi_i \leq \xi_j$).  On the other hand, the entries of $B$ below the block diagonal are given by $b_{ij} = \la^{\xi_j - \xi_i}\wt{b}_{ij}$ ($\xi_i > \xi_j$) which, since $\wt{B} \in \A_{\xi}^{\cc}$, has degree at most
$(\xi_j - \xi_i) + (\xi_i - \xi_j-1) = -1$, a contradiction to $B$ having values in $\La^+\U(n)^{\cc}$ unless $b_{ij} = 0$.
Hence $B = I$ and uniqueness is established.

Since $\Phi:M \to \Omega_r\U(n)$ is holomorphic map to a projective algebraic variety,
$B$, and so $A$ and $\Psi = A\ga_{\xi}$, are meromorphic on $M$ as in \cite[p.~560]{burstall-guest}.

All harmonic maps of finite uniton number have a polynomial associated extended solution $\wt\Phi:M \to \Omega_{\wt r}\U(n)$,
and so an associated extended solution $\Phi:M \to \Omega_r\U(n)$
given as described.
\end{proof}

\begin{remark} \label{rem:A}
(i) The method of Burstall and Guest applies to centreless groups, see
\cite{correia-pacheco} for a study of extended solutions into groups with centre, using a related notion of `$I$-canonical element'.

(ii) The matrices $B$ in $\aaa_{\xi}^{\cc}$ are nilpotent, and the matrices $A$ in $\A_{\xi}^{\cc}$ are
\emph{block unitriangular} by which we mean upper block-triangular with identity matrices on the block diagonal;  in particular $A-I$ is nilpotent.
The exponential map $B \mapsto A = \exp B$ is given by a finite power series in $B$; further, it is surjective with inverse given  $A \mapsto \log A$, a finite power series in $A-I$.
  
(iii) We exemplify the form of $A$ by showing it for types $(1,1,1,1,1,1)$ (so $r=5$) and $(1,2,2,1)$ (so $r=3$), respectively: the superscript in the notation $a_{ij}^{[k]}$ show the maximum degree $\xi_i-\xi_j-1$ of the polynomial $a_{ij}$; observe that this equals
$k-1$ on the $k$th block superdiagonal ($k=1,2,\ldots,r$): 
\be{A-ex}
A = \left(\begin{array}{c|c|c|c|c|c}
	1 & a_{12}^{[0]} & a_{13}^{[1]} & a_{14}^{[2]} & a_{15}^{[3]} & a_{16}^{[4]} \\ \hline
	0 & 1 & a_{23}^{[0]} & a_{24}^{[1]} & a_{25}^{[2]} & a_{26}^{[3]} \\ \hline  
	0 & 0 & 1 & a_{34}^{[0]} & a_{35}^{[1]} & a_{36}^{[2]} \\ \hline
	0 & 0 & 0 & 1 & a_{45}^{[0]} & a_{46}^{[1]}  \\ \hline
	0 & 0 & 0 & 0 & 1 & a_{56}^{[0]} \\ \hline
	0 & 0 & 0 & 0 & 0 & 1 	
\end{array}\right),
\qquad
A = \left(\begin{array}{c|cc|cc|c}
	1 & a_{12}^{[0]} & a_{13}^{[0]} & a_{14}^{[1]} & a_{15}^{[1]} & a_{16}^{[2]} \\ \hline
	0 & 1 & 0 & a_{24}^{[0]} & a_{25}^{[0]} & a_{26}^{[1]} \\   
	0 & 0 & 1 & a_{34}^{[0]} & a_{35}^{[0]} & a_{36}^{[1]} \\ \hline
	0 & 0 & 0 & 1 & 0 & a_{46}^{[0]}  \\
	0 & 0 & 0 & 0 & 1 & a_{56}^{[0]}  \\ \hline
	0 & 0 & 0 & 0 & 0 & 1 	
\end{array}\right).
\ee

(iv) The Grassmannian model $W = \Phi \H_+$ is given by $W = A \ga_{\xi}\H_+$
and so by \eqref{Grass-S1}
where $\al_i$ is the span of the columns $c_j$ of $A$ with $\xi_j < i$ (these $\al_i$ are functions of $\la$ as well as of points of $M$); clearly, the $\al_i$ are nested.  The columns of the matrix $A$ provide a canonical (a sort of `reduced echelon form') meromorphic basis for $Y = A \ga_{\xi}\C^n$ (and so for $W$), adapted to the nested sequence $(\al_i)$.
In the $S^1$-invariant case, the $\al_i$ do not depend on $\la$ and are the subbundles (2.14).

(v) $\Phi = [A\ga_{\xi}]$ satisfies the symmetry condition \eqref{Phi-Grass}, and so $\Phi_{-1}$ is a harmonic map into a Grassmannian, if and only if $A$ is a function of $\la^2$, i.e., its entries only involve polynomials with even powers of $\la$. 
Further $\Phi$ is $S^1$-invariant if and only $A$ is independent of $\la$.
Both statements follow from (iv), \S \ref{subsec:cx-Grass}, and the uniqueness of $A$.
\end{remark}

\smallskip

We now give a converse to Proposition \ref{prop:Psi-form}.
As above, denote the columns of $A$ by $c_1,\ldots, c_n$ so that
$c_j = (a_{1j},\ldots, a_{nj})^{\T}$.   We write  
$\sum_{j:\,P(j)}$ to mean the sum over all $j$ satisfying the condition $P(j)$; for example, $\sum_{j:\,\xi_j > \xi_k}$ means the sum over all columns $c_j$ in the blocks to the left of the block containing $c_k$.
Primes ${}'$ denote derivatives with respect to any local complex coordinate on $M$.
Recall the concept of `complex extended solution' from \S \ref{subsec:cx-extd-solns}. 

\begin{proposition} \label{prop:GrM}
Let $\xi$ be a canonical element of\/ $\Omega_r\U(n)$.
Let $A:M \to \A_{\xi}^{\cc}$ be a holomorphic map,
and set $\Psi = A\ga_{\xi}$.
Then $\Psi$ is a complex extended solution if and only if the columns of\/ $A$ satisfy
\be{GrM}
c_k' = \sum_{j:\,\xi_j > \xi_k} \la^{\xi_j-\xi_k -1} \rho_{jk}'c_j
		\qquad (r > \xi_k \geq 0)
\ee
where $\rho_{jk}$ is the coefficient of the term of degree $\xi_j-\xi_k -1$ in $a_{jk}$.

This equation is equivalent to
\be{GrM1}
a_{ik}' = \sum_{j:\,\xi_i \geq \xi_j > \xi_k}
		\la^{\xi_j-\xi_k -1} \rho_{jk}' a_{ij} \qquad (r \geq \xi_i > \xi_k \geq 0).
\ee

The equation \eqref{GrM1} holds if and only if it holds
 $\mod \la^{\xi_i-\xi_k-1}$ and is equivalent to
\be{GrM2}
a_{ik}' = \sum_{j:\,\xi_i > \xi_j > \xi_k} \la^{\xi_j-\xi_k -1} \rho_{jk}'a_{ij}
\qquad \mod \la^{\xi_i-\xi_k-1} \qquad
(r \geq \xi_i > \xi_k+1 \geq 1).
\ee 
\end{proposition}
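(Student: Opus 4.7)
The plan is a direct computation of the logarithmic derivative of $\Psi=A\ga_{\xi}$. Since $\ga_{\xi}$ is independent of $z$,
\[
\la\Psi^{-1}\Psi_z = \la\,\ga_{\xi}^{-1}\,\omega\,\ga_{\xi}, \qquad \omega:=A^{-1}A_z,
\]
so its $(i,j)$-entry equals $\la^{\xi_j-\xi_i+1}\omega_{ij}$. Because $A$ is holomorphic in $z$, $\Psi$ is automatically holomorphic, and therefore $\Psi$ is a complex extended solution precisely when every $\omega_{ij}$ has $\la$-order at least $\xi_i-\xi_j-1$.

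The first key step is to pin down $\omega$ entirely. Since $A-I$ is nilpotent and block strictly upper-triangular, so is each of its powers, and hence $A^{-1}=\sum_{m\ge 0}(-1)^{m}(A-I)^{m}$ is again block unitriangular with entries of $\la$-degree at most $\xi_i-\xi_j-1$ in position $(i,j)$ for $\xi_i>\xi_j$. Consequently $\omega$ is block strictly upper, with $\omega_{ij}=0$ for $\xi_i\le\xi_j$ and $\omega_{ij}$ of $\la$-degree at most $\xi_i-\xi_j-1$ otherwise. Combined with the order lower bound coming from the complex-extended-solution condition, every nonzero $\omega_{ij}$ is squeezed into a pure monomial of degree $\xi_i-\xi_j-1$. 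Writing $\omega=A_z+(A^{-1}-I)A_z$, the second summand has entries of $\la$-degree at most $\xi_i-\xi_j-2$ in position $(i,j)$, so only $A_z$ contributes at the top degree; its coefficient there is the top coefficient of $(a_{ij})_z$, namely $\rho'_{ij}$. Thus
\[
\omega_{ij}=\rho'_{ij}\la^{\xi_i-\xi_j-1}\ \ (\xi_i>\xi_j), \qquad \omega_{ij}=0\ \ (\xi_i\le\xi_j).
\]

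Reading $A_z=A\omega$ column-by-column now yields \eqref{GrM}, and entry-by-entry it gives \eqref{GrM1}; the summation range $\xi_i\ge\xi_j>\xi_k$ arises from intersecting the support condition $\xi_j>\xi_k$ for $\omega_{jk}\ne 0$ with the support of $a_{ij}$, which is nonzero exactly when $j=i$ or $\xi_i>\xi_j$. Conversely, if $A$ satisfies \eqref{GrM1}, the right-hand side can be rewritten as $(A\beta)_{ik}$, where $\beta_{jk}=\rho'_{jk}\la^{\xi_j-\xi_k-1}$ for $\xi_j>\xi_k$ and $0$ otherwise; thus $A_z=A\beta$, so $\omega=\beta$, and $\la\ga_{\xi}^{-1}\omega\ga_{\xi}$ is constant in $\la$, hence lies in $\La^+\u(n)^{\cc}$.

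The equivalence \eqref{GrM1}$\Leftrightarrow$\eqref{GrM2} is pure bookkeeping. The $j=i$ contribution on the right of \eqref{GrM1} is $\la^{\xi_i-\xi_k-1}\rho'_{ik}$, while every other term ($\xi_i>\xi_j>\xi_k$) has $\la$-degree at most $(\xi_j-\xi_k-1)+(\xi_i-\xi_j-1)=\xi_i-\xi_k-2$; reducing \eqref{GrM1} modulo $\la^{\xi_i-\xi_k-1}$ therefore deletes exactly the $j=i$ summand and produces \eqref{GrM2}. Conversely, the top coefficient of $a'_{ik}$ is tautologically $\rho'_{ik}$ by the definition of $\rho_{ik}$, so adjoining this top-degree identity to \eqref{GrM2} recovers \eqref{GrM1}. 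The only genuine input throughout is the degree/order squeeze that pins $\omega$ to a monomial form with explicit coefficients; everything else is index manipulation.
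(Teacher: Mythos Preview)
Your proof is correct and follows the same overall strategy as the paper: compute $\la\Psi^{-1}\Psi_z$ in terms of $\omega=A^{-1}A_z$, show that its entries are forced to be the specific monomials $\rho'_{jk}\la^{\xi_j-\xi_k-1}$, and then read off \eqref{GrM}--\eqref{GrM2}. The one genuine difference is in how you establish that $\omega_{jk}$ is a pure monomial. The paper works with $P=\la\ga_{\xi}^{-1}\omega\ga_{\xi}$ and proves by induction on $\xi_i-\xi_k$, using the recursive equation $a'_{ik}=\sum_j\la^{\xi_j-\xi_k-1}p_{jk}a_{ij}$, that each $p_{ik}$ is constant in $\la$ and equals $\rho'_{ik}$. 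You instead argue globally: since $\A_{\xi}^{\cc}$ is a group, $A^{-1}$ inherits the same degree bounds as $A$, so $\omega=A^{-1}A_z$ automatically has $\deg\omega_{jk}\le\xi_j-\xi_k-1$; combined with the order bound $\ge\xi_j-\xi_k-1$ from the extended-solution condition, this squeezes $\omega_{jk}$ to a single monomial, whose coefficient you then identify by the decomposition $\omega=A_z+(A^{-1}-I)A_z$. Your route avoids the induction and is arguably cleaner; the paper's route is more self-contained in that it does not appeal to the closure of $\A_{\xi}^{\cc}$ under inversion. The treatment of the equivalence \eqref{GrM1}$\Leftrightarrow$\eqref{GrM2} is essentially identical in both.
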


We shall call any of the above three equations the 
\emph{extended solution equation (for $A$)}.

\begin{proof}
On a coordinate domain $(U,z)$, set
\be{P}
P = \la\Psi^{-1}\Psi_z\,, \quad \text{equivalently,} \quad
\la\Psi_z = \Psi P.
\ee
Then $P$ is algebraic, i.e., its entries $p_{jk}$ are polynomial in $\la$ and $\la^{-1}$ (with coefficients holomorphic in $z$); further, from the block structure of $A$, $P$ is strictly upper block-triangular, i.e., $p_{jk} = 0$ for $\xi_j \leq \xi_k$,
so \eqref{P} reads 
\be{ck}
\la (\la^{\xi_k}c_k)' = \sum_{j:\,\xi_j > \xi_k} p_{jk} \la^{\xi_j}c_j\,,
\text{ equivalently, } 
c_k' = \sum_{j:\,\xi_j > \xi_k} \la^{\xi_j-\xi_k -1}p_{jk}\,c_j \qquad (r > \xi_k \geq 0).
\ee 
Taking the $i$th row, since $A$ is block unitriangular,
$a_{ij} = 0$ for $\xi_i < \xi_j$, so \eqref{ck} is equivalent to
\be{aik}
a_{ik}' = \sum_{j:\,\xi_i \geq \xi_j > \xi_k}
		\la^{\xi_j-\xi_k -1} p_{jk} a_{ij}
\qquad (r \geq \xi_i > \xi_k \geq 0).
\ee

\emph{Suppose that\/ $\Psi$ is a complex extended solution.}
Then, from \eqref{cx-extd}, each $p_{ik}$ is polynomial in $\la$ (with no $\la^{-1}$).
We prove by induction on $\xi_i-\xi_k$ that
(*): \emph{each $p_{ik}$ is of degree $0$ and equals $\rho_{ik}'$}.

First, if $\xi_i-\xi_k =1$, since $a_{ij} = \delta_{ij}$ when $\xi_j=\xi_i$, \eqref{aik}
reads $a'_{ik} = p_{ik}$, which establishes (*) since $a'_{ik}$ has degree $0$.
 
Now suppose that (*) holds for $\xi_i-\xi_k \leq s$ for some $s \geq 1$.  Then
for $\xi_i-\xi_k =s+1$, \eqref{aik} reads
$$
a_{ik}' = \la^s p_{ik}
	+ \sum_{j:\,\xi_i > \xi_j > \xi_k}\la^{\xi_j-\xi_k -1} p_{jk} a_{ij}. 
$$
By the induction hypothesis, all the terms in the sum have degree at most
$(\xi_j-\xi_k-1) + 0 + (\xi_i-\xi_j-1) = \xi_i-\xi_k-2 = s-1$ whereas the left-hand side
$a_{ik}'$ has degree at most $\xi_i-\xi_k-1 = s$.  Then equating coefficients of degree $\geq s$
establishes (*) for $\xi_i-\xi_k =s+1$, and the induction step is complete. 

Equation \eqref{GrM} follows.
Equation \eqref{GrM1} is the $i$th row of \eqref{GrM} and so is equivalent to it.

Now, by definition of $\rho_{ik}$, the term of maximum possible degree $\xi_i-\xi_k-1$ on the left-hand side of \eqref{GrM1} equals the term of that degree, $\la^{\xi_i-\xi_k -1}\rho'_{ik}a_{ii} =  \la^{\xi_i-\xi_k -1}\rho'_{ik}$, on the right-hand side
--- all other terms in that sum are of degree
at most $(\xi_j-\xi_k-1)+(\xi_i-\xi_j-1) = \xi_i-\xi_k-2$.  Hence \eqref{GrM1} holds if and only if it holds $\mod \la^{\xi_i-\xi_k-1}$, and we can miss out terms of degree
$\xi_i-\xi_k-1$, i.e., those with with $\xi_i = \xi_j$, in the summation.  In particular, \eqref{GrM1} is equivalent to \eqref{GrM2}.

\smallskip  

\emph{Conversely, suppose that \eqref{GrM} holds.}
Then \eqref{P} holds with each $p_{jk}$ polynomial in $\la$, so that \eqref{cx-extd} holds and $\Psi$ is a complex extended solution. 
\end{proof}

Let $\xi$ be a canonical element of $\Omega_r\U(n)$ and let $(\A_{\xi}^{\cc})_0 = \A_{\xi}^{\cc} \cap \U(n)$, the group of block unitriangular $n \times n$ matrices with complex entries.
Let $\Sol_{\xi}^{\cc}$ (resp.\ $(\Sol_{\xi}^{\cc})_0$) denote the space of meromorphic maps $A$ from $M$ to $\A_{\xi}^{\cc}$ (resp.\ $(\A_{\xi}^{\cc})_0$) which satisfy the equation \eqref{GrM} away from the poles of $A$.  Combining Propositions \ref{prop:Psi-form} and
\ref{prop:GrM}, we have
 
\begin{corollary} \label{cor:one-one}
Let $\xi$ be a canonical element of\/ $\Omega_r\U(n)$. The assignment $A \mapsto \Phi=[A\ga_{\xi}]$ defines a one-to-one correspondence between $\Sol_{\xi}^{\cc}$ and the space of extended solutions\/ $\Phi:M \to \Omega_r\U(n)$ of type $\xi$.  It restricts to a one-to-one correspondence between $(\Sol_{\xi}^{\cc})_0$ and
the space of\/ $S^1$-invariant extended solutions\/ $\Phi:M \to \Omega_r\U(n)$ of type $\xi$.
\qed \end{corollary}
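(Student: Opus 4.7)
The plan is to combine Propositions \ref{prop:Psi-form} and \ref{prop:GrM} together with Remark \ref{rem:A}(v); the corollary is essentially a repackaging of these two results, so I do not expect any serious new difficulty.

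First, I would check that the forward map $A \mapsto [A\ga_{\xi}]$ is well defined. Given $A \in \Sol_{\xi}^{\cc}$, Proposition \ref{prop:GrM} guarantees that $\Psi := A\ga_{\xi}$ is a meromorphic complex extended solution on $M$, being a genuine complex extended solution away from the poles of $A$. Hence $\Phi := [\Psi]$ is an extended solution on that open set, and by the discussion at the end of \S \ref{subsec:cx-extd-solns} it extends smoothly across the poles of $A$ to give an extended solution $\Phi:M \to \Omega_r\U(n)$; it is of type $\xi$ by the very definition stated after Proposition \ref{prop:Psi-form}.

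Next, I would verify bijectivity. Surjectivity is the existence clause of Proposition \ref{prop:Psi-form}: any extended solution $\Phi:M \to \Omega_r\U(n)$ of type $\xi$ admits a meromorphic $A:M \to \A_{\xi}^{\cc}$ with $\Phi = [A\ga_{\xi}]$, and applying Proposition \ref{prop:GrM} to the resulting complex extended solution $\Psi = A\ga_{\xi}$ forces $A$ to satisfy the extended solution equation, so $A \in \Sol_{\xi}^{\cc}$. Injectivity is then immediate from the uniqueness clause of Proposition \ref{prop:Psi-form}, which asserts that $A$ (and $\xi$) are determined by $\Phi$.

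Finally, for the restricted statement I would invoke Remark \ref{rem:A}(v), which characterizes $S^1$-invariance of $\Phi = [A\ga_{\xi}]$ by $A$ being independent of $\la$, equivalently $A$ taking values in $(\A_{\xi}^{\cc})_0 = \A_{\xi}^{\cc}\cap\U(n)$. Restricting the bijection already established to such $A$ then yields the claimed one-to-one correspondence with $S^1$-invariant extended solutions of type $\xi$. The only mild subtlety worth flagging is that the correspondence mixes \emph{meromorphic} $A$ with \emph{smooth} $\Phi$, but the removability of the poles of $A$ after projection to $\Omega\U(n)$ was already established in the proof of Proposition \ref{prop:Psi-form}, so no genuine obstacle arises.
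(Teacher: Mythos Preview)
Your proposal is correct and matches the paper's approach: the corollary is stated with a \qed\ immediately after the line ``Combining Propositions \ref{prop:Psi-form} and \ref{prop:GrM}, we have'', so the paper treats it as the direct consequence of exactly the two results you invoke, with Remark \ref{rem:A}(v) handling the $S^1$-invariant restriction. Your write-up simply spells out what the paper leaves implicit.
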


\begin{remark} \label{rem:GrM}
(i) In \eqref{GrM1}, we take the sum from the diagonal block onwards, as the entries $a_{ij}$ are zero to the left of that block.  However, since we only need this equation to hold $\mod \la^{\xi_i-\xi_k-1}$, we may additionally omit any entries in that diagonal block; in \eqref{GrM2},
we omit \emph{all} such entries.

(ii) An extended solution $\Phi$ of some type $\xi$ can be deformed to an $S^1$-invariant solution of the same type, called its \emph{$S^1$-invariant limit}, see \cite[\S 2]{burstall-guest}, and  \cite{aleman-martin-persson-svensson} for a treatment of smoothness.  For any $\mu \in \C$, define $A_{\mu}:M \to \A^{\C}_{\xi}$ by $A_{\mu}(z)(\la) = A(z)(\mu\la)$ \ $(z \in M, \ \la \in \C)$. If $A$ satisfies the extended solution equation \eqref{GrM}, so does  $A_{\mu}$ for all $\mu  \in \C$ including $\mu=0$.  Then the deformation is implemented by $\mu \mapsto A_{\mu}$ with $\mu$ going from $1$ to $0$.

(iii) As in Remark \ref{rem:A}(iv), the Grassmannian model $W = \Phi\H_+$ is given by \eqref{Grass-S1} where 
$\al_i = \spa\{c_j\,: \xi_j < i\}$ .  In the above deformation, these $\al_i$ tend to the unitons \eqref{nested} of the $S^1$-invariant limit.
\end{remark}

\smallskip   

The equations \eqref{GrM} for $\U(n)$ are easy to solve, see \cite[\S 4]{burstall-guest} and \cite[Ch.~22]{guest-book}.  However, finding all solutions in $\O(n)$ is not so easy: we turn to that problem now.

\section{Harmonic maps of finite uniton number into $\O(n)$} \label{sec:On}

\subsection{Generalities on harmonic maps into $\O(n)$ and its symmetric spaces}
\label{subsec:On-1}

Let $z=x+\ii y \mapsto \ov{z} = x-\ii y$ denote standard complex conjugation on $\C$.  To adapt the theory of the last section to $\O(n)$, we include $\R^n$ in $\C^n$ so that
$\R^n = \{(z_1,\ldots,z_n) \in \C^n: \ov{z}_i = z_i \ (i=1,2,\ldots, n)\}$, 
and then $\O(n)$ is the subgroup of $\U(n)$ given by
$\O(n) = \{A\in \U(n) : \ov{A} = A\} = \{A\in \U(n) : A^{\T} A = I\}$ where, for $A=(a_{ij})$,
we have $\ov{A} = (\ov{a_{ij}})$ and $A^{\T} = (a_{ji})$.
Similarly 
$\Omega\O(n)  = \Omega\SO(n) = \{\Phi \in \Omega\U(n): \ov{\Phi} = \Phi\} 
= \{\Phi \in \Omega\U(n) : \Phi^{\T}\,\Phi = I\}$
where, for $\Phi = \sum \la^i \Phi_i$, we set
$\ov{\Phi} = \sum \la^{-i}\ov{\Phi_i}$ and $\Phi^{\T} = \sum \la^i \Phi_i^{\T}$\,.

Now, given $\Phi \in \Omega\U(n)$, set $W = \Phi\H_+$ as in 
\eqref{Grass-model}. Then \cite[\S 8.5]{pressley-segal}, $\Phi \in \Omega\O(n)$ if and only if $\ov{W}^{\perp} = \la W$.  
 However, to deal with polynomial extended solutions, as in \cite{unitons} we define for each $r \in \N$
the following subset of $\Omega_r\U(n)$ (cf.\ \eqref{Omega_r}):
\be{Omega-R}
\Omega_r\U(n)^{\rr}
 = \{\Phi \in \Omega_r\U(n) : \ov{\Phi} = \la^{-r}\Phi\}
  = \{\Phi \in \Omega_r\U(n) : \Phi^{\T}\,\Phi = \la^r I\}.
\ee
Then (cf.\ \cite[\S 6]{unitons}),
\emph{$\Phi \in \Omega_r\U(n)^{\rr}$
 if and only if\/
$\ov{W}^{\perp} = \la^{1-r}W$}, in which case we say that
$\Phi$ and $W$ are \emph{real of degree $r$}. 

Let $\Phi:M \to \Omega_r\U(n)^{\rr}$ be an extended solution, and set $W = \Phi\H_+$\,.   
\emph{If $r$ is even}, then $\ov{\Phi_{-1}} = \Phi_{-1}$ so that
$\varphi = \pm\Phi_{-1}$ are harmonic maps into $\O(n)$. 
By \cite[Lemma 6.4]{unitons}, 
all harmonic maps $M \to \O(n)$ of finite uniton number have an extended solution 
 $\Phi:M \to \Omega_r\U(n)^{\rr}$ with $r$ even, and $\varphi = \pm\Phi_{-1}$ --- note that the (minimal) uniton number of $\varphi$ may be less than $r$ and may be even or odd. 
\emph{If $r$ is odd}, then, following \cite[\S 6.3]{unitons},
$n$ must be even, say $n = 2m$, and $\ov{\Phi_{-1}} = -\Phi_{-1}$
so that $\varphi = \pm \ii\Phi_{-1}$ are maps into $\O(2m)$.
In all cases, the alternating factorization \cite[\S 6.1]{unitons} of $\Phi$, which can be calculated from $W$ by 
\eqref{alt-fact}, \eqref{alpha} and \eqref{fact}, gives an explicit factorization into unitons.

The \emph{symmetric spaces} of $\O(n)$ and $\SO(n)$ are the real Grassmannians $G_k(\R^n) = \O(n)/\O(k) \times \O(n-k) = \SO(n)/{\mathrm S}(\O(k) \times \O(n-k))$ with double cover the Grassmannian of oriented subspaces, $\SO(n)/\SO(k) \times \SO(n-k)$ ($k=0,1,\ldots, n$), and, when $n=2m$, the space $\O(2m)/\U(m)$ of orthogonal complex structures $J$ on $\R^{2m}$ and its identity component $\SO(2m)/\U(m)$.  Note that mapping each $J$ to its $\ii$-eigenspace identifies $\O(2m)/\U(m)$ with the space of all maximally isotropic subspaces of $\C^{2m}$.
 Let $\Phi:M \to \Omega_r\U(n)^{\rr}$ be an extended solution which satisfies
the symmetry condition \eqref{Phi-Grass}.
\emph{If\/ $r$ is even}, 
$\varphi = \pm\Phi_{-1}$ are harmonic maps of finite uniton number into a real Grassmannian
$G_*(\R^n)$, all such harmonic maps can be obtained this way \cite[Lemma 6.6]{unitons};
note that $-\varphi = \varphi^{\perp}$.
If\/ $r$ is odd, then $n$ is even, and
$\pm\Phi_{-1}$ define harmonic maps of finite uniton number into $\O(2m)/\U(m)$ for $m=n/2$; all such harmonic maps are obtained this way
\cite[Lemma 6.9]{unitons}.

Lastly, let\/ $\Phi:M \to \Omega_r\U(n)^{\rr}$ be an extended solution
which is $S^1$-invariant, i.e., satisfies \eqref{Phi-S1-invt}.  Then $\Phi$ is given by \eqref{fact}
for some superhorizontal sequence \eqref{nested} of holomorphic subbundles of $\CC^n$ which is \emph{real} in the sense that
\emph{the \emph{polar} $\al_i^{\circ} := \ov{\al_i}^{\perp}$ of\/ $\al_i$ is $\al_{r+1-i}$ for all $i$, equivalently, with
$\psi_i$ defined by \eqref{phi-S1-invt}, $\psi_i = \ov{\psi_{r-i}}$ for all $i$}, see, for example, \cite[\S 6.4]{unitons}.
The corresponding harmonic map $\varphi:=\Phi_{-1}$ is given by \eqref{phi-S1-invt}; it defines a map into a \emph{real}  Grassmannian (resp.\ $\O(2m)/\U(m)$ with $n=2m$) according as $r$ is even (resp.\ odd).

\subsection{Analysis of harmonic maps into $\O(n)$}
\label{subsec:On}

To analyse further harmonic maps into $\O(n)$,
we equip $\C^n$ with its standard symmetric inner product
$(x,y) = \sum_{i=1}^n x_i y_i$ for $x = (x_1,\ldots,x_n)$,
$y = (y_1,\ldots,y_n)$.  Then
the \emph{complexification} $\O(n,\C)$ of $\O(n)$ is given by
$\{ A \in \GL(n,\C): A^{\T} A = I\}$ where $A^{\T}$ is the linear map characterized by
$(Ax,y) = (x,A^{\T} y)$ \ $(x,y \in \C^n)$. 
With respect to the standard basis $\{e_1=(1,0,0,\ldots,0),\, 
e_2=(0,1,0,\ldots,0), \ldots, e_n = (0,0,\ldots,0,1)\}$, the matrix for $A^{\T}$ is the usual transpose $(a_{ji})$ obtained from the matrix $A = (a_{ij})$ by reflection in the principal diagonal
 $i=j$.   However, calculations are aided by taking a \emph{null basis}
$\{\wt{e}_i\}$ for $\C^n$, i.e., one with $(\wt{e}_i,\wt{e}_j) = \delta_{i\jbar}$ where, for any $j \in \{1,\ldots, n\}$ we write
$\jbar = n+1-j$.  Such a basis is given by 
$\wt{e}_j = (1/\sqrt{2})(e_j + \ii e_{\jbar})$,
$\wt{e}_{\jbar} = (1/\sqrt{2})(e_j - \ii e_{\jbar})$ 
for $j \leq n/2$, together with $\wt{e}_{(n+1)/2} = e_{(n+1)/2}$ if $n$ is odd.
\emph{{}From now on, we shall write all vectors and
matrices with respect to this null basis};
then \emph{the standard symmetric bilinear inner product on $\C^n$
of\/ $v = \sum_j v_j \wt{e}_j$ and $w = \sum_j w_j \wt{e}_j$ is given by $(v,w) = \sum_{j=1}^n v_j w_{\jbar}$}.  In this null basis the transpose $A^{\T}$ is represented by the matrix $A^{\TT}$ with entries
$(A^{\TT})_{ij} = a_{\jbar\ibar}$; we shall call this the \emph{second transpose of $A$}.  
This definition makes sense for any (rectangular) matrix; for a square matrix $A$,
$A^{\TT}$ is obtained from $A$ by reflection in the \emph{second diagonal} $i = \ov{j}$.

As before, denote the $i$th column of $A$ by $c_i$.  Then $A \in \O(n,\C)$ if and only if
\be{cx-orthog}
(c_i,c_j) = \delta_{i \jbar} \qquad (i,j=1,\ldots, n).
\ee
Now, according to  \cite{burstall-personal}, the canonical elements of $\o(n)$ are of the form  $\xi = \ii\diag(\xi_1,\ldots,\xi_n)$ where $\xi_i$ are
integers or half-integers with $\xi_i - \xi_{i+1} = 0$ or $1$, $\xi_1=r/2$ for some $r=r(\xi) \in \N$ and 
$\xi_{\ibar} = -\xi_i$ $\forall i$, which satisfy the \emph{rider} (R): \emph{if $r$ is odd, $\#\{i:\xi_i =1/2\} \geq 2$}.   This corrects \cite[Proposition A.2]{burstall-eschenburg-ferreira-tribuzy} which omits the rider and gives a condition (C2) which is incorrect in the $\o(n)$ case.  
The corresponding eigenspaces of $\ad\xi$, which we shall denote by
$\g_k^{\rr} = \g_k^{\rr}(\xi)$, are the intersections with $\o(n,\C) = \o(n) \otimes \C$ of the eigenspaces
$\g_k^{\cc} = \g_k^{\cc}(\xi)$ for $\u(n)$ described in \S \ref{subsec:BuGu}, thus $\g_k^{\rr}$ is the $k$th block superdiagonal $\xi_i-\xi_j = k$ of $\o(n,\C)$.

When the $\xi_i$ are half-integers, the canonical elements above 
do not exponentiate to geodesics in $\O(n)$. 
However, we can work in $\Omega_r\U(n)^{\R}$ by adding the constant matrix $(r/2)I$ on to each canonical element (cf. \S \ref{subsec:BuGu})  to give the following definition.	

\begin{definition} \label{def:can-el-R}
By a \emph{canonical element of\/ $\Omega_r\U(n)^{\rr}$} we mean a diagonal matrix $\xi = \ii\diag(\xi_1,\ldots,\xi_n)$ where the $\xi_i$ are \emph{integers} with
$\xi_i - \xi_{i+1} = 0$ or $1$, $\xi_1 = r$, $\xi_n = 0$,
$\xi_{\ibar} = r-\xi_i$ and, if $r$ is odd, we have that
(R): $\xi_{n/2-1}=\xi_{n/2}$.
\end{definition}

Recall that, if $r$ is odd, $n$ is even.  In this case, the rider (R) says
$\xi_{n/2-1}=\xi_{n/2} = (r+1)/2$ and
$\xi_{n/2+1}=\xi_{n/2+2} = (r-1)/2$.
Noting that the canonical elements of $\Omega_r\U(n)^{\rr}$ form a subset of those in
$\Omega_r\U(n)$, we may define `type' as in \S \ref{subsec:BuGu}.  Then the possible types of canonical elements for $\Omega_r\U(n)^{\rr}$ are
$(t_0,t_1, \ldots,t_r)$ where the $t_i$ are positive integers such that $t_i = t_{r-i}$ for all $i$, and (by the rider (R))
if $r$ is odd, the two middle entries $t_{(r-1)/2} = t_{(r+1)/2}$ are at least $2$.

\begin{remark} \label{rem:type111} (i) \emph{When the type is $(1,t_1,\ldots,t_{r-1},1)$,
$\g_r^{\rr}$ is zero.}  Indeed, it consists of matrices with only possible non-zero entry in the top-right position, but this is zero by the skew-symmetry ($B^{\TT} = -B$) of matrices $B$ in $\o(n,\C)$.

(ii) If $n$ is odd, the maximal uniton number is $n-1$ attained by type $(1,1,\ldots,1)$. If $n$ is even, the rider (R) shows that this type is not possible, and the maximal uniton number is $n-2$ attained by type $(1,\ldots,1,2,1,\ldots,1)$.  This confirms the bounds
on the uniton number in \cite[Proposition 6.17]{unitons};
we shall see how to construct extended solutions of all types in Theorem \ref{th:gen}. 
\end{remark}

Let $\xi$ be a canonical element of $\Omega_r\U(n)^{\rr}$.
Recall the space $\A_{\xi}^{\cc}$ from Definition \ref{def:A-complex}, and set $\A_{\xi}^{\rr} = \A_{\xi}^{\cc} \cap \Omega\O(n,\C)$.
Let $A \in \A_{\xi}^{\rr}$. By definition of $\A_{\xi}^{\cc}$, each entry $a_{ij}$ of $A$ above the block diagonal, i.e., with
$\xi_i - \xi_j \geq 1$\,, is polynomial of degree at most $\xi_i-\xi_j-1$. 
We now show that when $A \in \A_{\xi}^{\rr}$, the degrees of the entries $a_{i\ibar}$ on the second diagonal which lie above the
block \emph{super}diagonal, i.e.\ with $\xi_i - \xi_{\ibar} \geq 2$, are at most one less than this.

\begin{lemma} \label{lem:top-right-deg}
Let $A \in \A_{\xi}^{\rr}$.
The degree of an element $a_{i\ibar}$ of\/ $A$ with
$\xi_i-\xi_{\ibar} \geq 2$ is at most $\xi_i-\xi_{\ibar}-2 = r-2\xi_{\ibar}-2$.
\end{lemma}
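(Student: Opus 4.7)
The plan is to exploit the orthogonality $A^{\TT}A = I$, written in column form as $(c_i,c_j) = \delta_{i\jbar}$, and locate the single relation that directly involves $a_{i\ibar}$. The hypothesis $\xi_i - \xi_{\ibar} \geq 2$ forces $i \neq \ibar$, so that $(c_{\ibar}, c_{\ibar}) = \delta_{\ibar i} = 0$; expanding this in the null basis yields
\[
\sum_{k=1}^n a_{k\ibar}\, a_{\kbar\ibar} = 0.
\]

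The core of the argument is a term-by-term analysis of this sum, using the block structure of $A \in \A_{\xi}^{\cc}$ (Definition~\ref{def:A-complex}). The factor $a_{k\ibar}$ equals $\delta_{k\ibar}$ when $\xi_k \leq \xi_{\ibar}$ and otherwise has degree at most $\xi_k - \xi_{\ibar} - 1$; dually, $a_{\kbar\ibar}$ equals $\delta_{\kbar\ibar}$ when $\xi_{\kbar} \leq \xi_{\ibar}$ (equivalently $\xi_k \geq \xi_i$) and otherwise has degree at most $\xi_{\kbar} - \xi_{\ibar} - 1 = r - \xi_k - \xi_{\ibar} - 1$. Precisely two indices, $k = i$ and $k = \ibar$, produce contributions in which one factor is the diagonal $1$ and the other is $a_{i\ibar}$; these contribute $2\,a_{i\ibar}$ in total. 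Every remaining nonzero summand must have both factors strictly above the block diagonal, which forces $\xi_{\ibar} < \xi_k < \xi_i$. Thus the identity reduces to
\[
2\, a_{i\ibar} \;=\; -\!\!\sum_{k:\,\xi_{\ibar} < \xi_k < \xi_i}\!\! a_{k\ibar}\, a_{\kbar\ibar}.
\]

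To conclude, I would add the two degree bounds: for each $k$ in the summation range the product $a_{k\ibar}\,a_{\kbar\ibar}$ has degree at most $(\xi_k - \xi_{\ibar} - 1) + (r - \xi_k - \xi_{\ibar} - 1) = r - 2\xi_{\ibar} - 2 = \xi_i - \xi_{\ibar} - 2$, a bound that is independent of $k$. Dividing by $2$ delivers $\deg a_{i\ibar} \leq \xi_i - \xi_{\ibar} - 2$, which is the assertion.

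The main obstacle is really just making sure the bookkeeping is right: one must verify that the contributions from $k = i$ and $k = \ibar$ are genuinely distinct (which uses $i \neq \ibar$, guaranteed by $\xi_i > \xi_{\ibar}$); and, if $n$ is odd and the fixed index $k = (n+1)/2 = \kbar$ happens to lie in the summation range, the resulting square $a_{k\ibar}^2$ still obeys the same degree bound, since then $\xi_k = r/2$ and each factor contributes exactly $r/2 - \xi_{\ibar} - 1$ to the degree.
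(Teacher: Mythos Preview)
Your proof is correct and follows essentially the same route as the paper: expand $(c_{\ibar},c_{\ibar})=0$ in the null basis, isolate the two terms giving $2a_{i\ibar}$, and bound the degree of each remaining product $a_{k\ibar}a_{\kbar\ibar}$ by $(\xi_k-\xi_{\ibar}-1)+(\xi_{\kbar}-\xi_{\ibar}-1)=\xi_i-\xi_{\ibar}-2$. Your extra bookkeeping about $i\neq\ibar$ and the possible self-conjugate index when $n$ is odd is fine but not strictly needed for the bound.
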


\begin{proof} 
Complex-orthogonality \eqref{cx-orthog} gives $(c_{\ibar},c_{\ibar})=0$.
When $\xi_i-\xi_{\ibar} \geq 2$, expanding this gives
$2 a_{i\ibar} = -\sum_{\ell=i+1}^{\ibar-1} a_{\ell\ibar} a_{\ellbar\ibar}$ which equals
$-\sum_{\ell:\xi_{\ibar} < \xi_{\ell} < \xi_i} a_{\ell\ibar} a_{\ellbar\ibar}$\,,
since $a_{\ell\ibar}=\delta_{\ell\ibar}$ when $\xi_{\ell}=\xi_{\ibar}$.
The degree of each product in this sum is at most
$(\xi_{\ell}-\xi_{\ibar}-1) + (\xi_{\ellbar}-\xi_{\ibar}-1)$, which gives the stated bound.
\end{proof}

We now give a version of Proposition \ref{prop:Psi-form} for $\O(n)$.
Let $\Sol_{\xi}^{\rr}$ (resp.\ $(\Sol_{\xi}^{\rr})_0$\,) denote the space of meromorphic maps
$A$ from $M$ to $\A_{\xi}^{\rr}$ (resp.\ $(\A_{\xi}^{\rr})_0$\,) which satisfy the extended solution equation \eqref{GrM} away from the poles of $A$.

\begin{proposition} \label{prop:BuGu-On}
Let $\wt\Phi: M \to \Omega_{\wt r}\U(n)^{\rr}$ be a polynomial extended solution for some $\wt r \in \N$. Then
there is an equivalent extended solution $\Phi:M \to \Omega_r\U(n)^{\rr}$ with
$0 \leq r \leq \wt r$, a
canonical element\/ $\xi =\ii\diag(\xi_1,\ldots,\xi_n)$ 
 of\/ $\Omega_r\U(n)^{\rr}$ and meromorphic map $A:M \to \A_{\xi}^{\rr}$ such that $\Phi = [A\ga_{\xi}]$.
 
Further, $A$ and $\xi$ are uniquely determined by $\Phi$, in fact,
 the assignment $A \mapsto \Phi=[A\ga_{\xi}]$ defines a one-to-one correspondence between $\Sol_{\xi}^{\rr}$ and the space of extended solutions $\Phi:M \to \Omega_r\U(n)^{\rr}$ of type $\xi$.  It restricts to a one-to-one correspondence between $(\Sol_{\xi}^{\rr})_0$ and the space of\/ $S^1$-invariant extended solutions $\Phi:M \to \Omega_r\U(n)^{\rr}$ of type $\xi$.

All harmonic maps of finite uniton number $\varphi:M \to \O(n)$ have an associated extended solution $\Phi \in \Sol_{\xi}^{\rr}$ for some canonical element $\xi$. 
\end{proposition}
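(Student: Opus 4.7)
My plan is to reduce the statement to Proposition~\ref{prop:Psi-form} and Corollary~\ref{cor:one-one}, by exploiting the reality involution
\begin{equation*}
\sigma:\La\U(n)^{\cc}\to\La\U(n)^{\cc},
\qquad
\sigma(\Psi)(\la)=\la^{r}\bigl(\Psi(\la)^{\TT}\bigr)^{-1}
\end{equation*}
(written in the null basis), whose fixed set inside $\Omega_r\U(n)$ is precisely $\Omega_r\U(n)^{\rr}$. A direct check shows that $\sigma$ sends (complex) extended solutions to (complex) extended solutions, and that it is equivariant with respect to left multiplication by constant loops $\eta \in \Omega\U(n)$ up to the anti-involution $\eta\mapsto(\eta^{\TT})^{-1}$, hence descends to extended-solution equivalence classes.

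Given $\wt\Phi:M\to\Omega_{\wt r}\U(n)^{\rr}$, I apply Proposition~\ref{prop:Psi-form} to obtain an equivalent $\Phi=[A\ga_\xi]\in\Omega_r\U(n)$ with $\xi$ a canonical element of $\Omega_r\U(n)$ and $A:M\to\A_\xi^{\cc}$ meromorphic. Using that $A$ is block-unitriangular and that $\ga_\xi^{\TT}=\diag(\la^{\xi_{\ibar}})$, a null-basis computation gives
\begin{equation*}
\sigma(A\ga_\xi)=(A^{\TT})^{-1}\,\ga_{\xi''},
\qquad \xi''_i:=r-\xi_{\ibar},
\end{equation*}
where $\xi''$ is itself a canonical element of $\Omega_r\U(n)$ (it inherits the monotonicity and endpoint conditions from $\xi$) and $(A^{\TT})^{-1}\in\A_{\xi''}^{\cc}$, so $\sigma(A\ga_\xi)$ is again in Burstall--Guest canonical form. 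Since $\wt\Phi$ is $\sigma$-fixed, the equivalence class $[\Phi]=[\wt\Phi]$ is $\sigma$-stable, so that by selecting the canonical representative afforded by Proposition~\ref{prop:Psi-form} inside this stable class, the uniqueness half of that proposition forces the two canonical-form representations to agree: $\xi''=\xi$ and $(A^{\TT})^{-1}=A$. The first identity is exactly the symmetry $\xi_i+\xi_{\ibar}=r$ of Definition~\ref{def:can-el-R}, and the second is $A^{\TT}A=I$, giving $A\in\A_\xi^{\cc}\cap\Omega\O(n,\C)=\A_\xi^{\rr}$, and in particular $\Phi\in\Omega_r\U(n)^{\rr}$.

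The rider~(R) when $r$ is odd is then obtained by subtracting $(r/2)I$ from $\xi$ to produce a canonical element of $\o(n)$ in the half-integer sense and invoking the $\o(n)$-classification recalled just before Definition~\ref{def:can-el-R}, cf.\ also Remark~\ref{rem:type111}. Uniqueness of $(A,\xi)$ is inherited from Proposition~\ref{prop:Psi-form}. For the bijection with $\Sol_\xi^{\rr}$, the forward direction is what we have just shown; for the converse, given $A\in\Sol_\xi^{\rr}$, the computation
\begin{equation*}
(A\ga_\xi)^{\TT}(A\ga_\xi)=\ga_\xi^{\TT}\,(A^{\TT}A)\,\ga_\xi=\ga_\xi^{\TT}\ga_\xi=\la^r I
\end{equation*}
shows $[A\ga_\xi]\in\Omega_r\U(n)^{\rr}$, while Proposition~\ref{prop:GrM} ensures $\Psi=A\ga_\xi$ is a complex extended solution. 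The $S^1$-invariant case is immediate from Remark~\ref{rem:A}(v), and the last assertion follows by applying the above to the $\wt\Phi\in\Omega_{\wt r}\U(n)^{\rr}$ produced, for any harmonic map of finite uniton number into $\O(n)$, by \cite[Lemma~6.4]{unitons}.

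The main obstacle will be the passage from $\sigma$-invariance of the equivalence class to the existence of a $\sigma$-fixed canonical representative within it, so that uniqueness in Proposition~\ref{prop:Psi-form} may be applied directly at the level of $(A,\xi)$; this requires a careful check that the residual ambiguity in the choice of canonical representative (left multiplication by a constant loop in $\Omega\U(n)$) interacts compatibly with $\sigma$. The subsidiary delicate point is the rider~(R), which emerges most naturally from the $\o(n)$-classification of canonical elements combined with the $(r/2)I$-shift, rather than directly from the reality $\sigma(\Phi)=\Phi$.
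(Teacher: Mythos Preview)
Your approach---deducing the $\O(n)$ statement from the $\U(n)$ one via the reality involution $\sigma$---is genuinely different from the paper's. The paper instead applies the Burstall--Guest machinery \emph{directly} to the centreless group $\SO(n)/C$: it passes to $\la^{-\wt r/2}\,\pi\circ\wt\Phi:M\to\Omega(\SO(n)/C)$, invokes \cite[Theorem~4.5]{burstall-guest} there to obtain a canonical element $\check\xi\in\o(n)$ (which automatically carries the symmetry and the rider~(R)) together with $A=\exp B$ for $B:M\to\aaa_\xi^{\rr}$, and then shifts by $(r/2)I$ to produce $\xi$ and $\Phi=[A\ga_\xi]\in\Omega_r\U(n)^{\rr}$, finally checking that $\wt\Phi\Phi^{-1}$ is constant. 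This buys two things at once: the real structure on $A$ and the rider on $\xi$ are built in, because the canonical-element classification being used is that of $\o(n)$, not $\u(n)$.

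Your computation $\sigma(A\ga_\xi)=(A^{\TT})^{-1}\ga_{\xi''}$ is correct, and the argument that the equivalence class is $\sigma$-stable and that $\xi''=\xi$ (hence $\xi_i+\xi_{\ibar}=r$) goes through via uniqueness of $\xi$ for the class. However, the step you flag as ``the main obstacle'' is a genuine gap and is not easy to close within this framework: the uniqueness in Proposition~\ref{prop:Psi-form} says that $A$ and $\xi$ are determined by the specific representative $\Phi$, \emph{not} that a canonical-form representative is unique within its equivalence class. From $\sigma_r(\Phi)\sim\Phi$ you therefore cannot directly conclude $\sigma_r(\Phi)=\Phi$, and hence cannot force $(A^{\TT})^{-1}=A$. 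Filling this in amounts to showing that the unstable manifold $U_\xi$ meets each equivalence class in at most one point, which is essentially reproving the Burstall--Guest normalisation in the real setting---exactly what the paper's direct approach avoids.

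The treatment of the rider~(R) has a related problem. Your involution argument yields only $\xi_i+\xi_{\ibar}=r$; subtracting $(r/2)I$ gives a symmetric diagonal matrix, but not \emph{a priori} a canonical element of $\o(n)$ in the root-theoretic sense (the simple roots of $\o(n)$, of type $B$ or $D$, are not those of $\u(n)$). So ``invoking the $\o(n)$-classification'' assumes precisely what needs to be checked: that the $\u(n)$-canonical $\xi$ you produced is already $\o(n)$-canonical. In the paper's approach this issue never arises, since $\check\xi$ is produced as an $\o(n)$-canonical element from the outset. Your treatment of uniqueness, the bijection with $\Sol_\xi^{\rr}$, the $S^1$-invariant restriction, and the final assertion via \cite[Lemma~6.4]{unitons} is fine.
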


\begin{proof}
Let $C$ be the centre of $\SO(n)$, this is trivial if $n$ is odd and $\{\pm I\}$ if $n$ is even; let $\pi:\SO(n) \to \SO(n)/C$ be the natural projection.  Recall that, if $\wt r$ is odd, then $n$ is even \cite[\S 6.3]{unitons}.
Then, in all cases, $\la^{-\wt r/2}\,\pi\circ\wt\Phi:M \to \Omega(\SO(n)/C)$ is an extended solution.
We apply \cite[Theorem 4.5]{burstall-guest} to the centreless group $\SO(n)/C$ which has Lie algebra $\o(n)$.
We set $\aaa_{\xi}^{\rr}$ equal to the intersection of the set $\aaa_{\xi}^{\cc}$ defined by \eqref{u0xi} with $\La\o(n,\C)$,
then $\aaa_{\xi}^{\rr}$ is the $\u_{\xi}^0$
of \cite[Proposition 2.7]{burstall-guest} for $\g=\o(n)$, and the exponential map sends
$\aaa_{\xi}^{\rr}$ to $\A_{\xi}^{\rr}$.

By \cite[p.\ 560]{burstall-guest} there is an associated extended solution $\check\Phi:M \to \Omega(\SO(n)/C)$,  canonical element
$\check{\xi} \in \o(n)$ and meromorphic map $B:M \to \aaa_{\xi}^{\rr}$ such that, setting
$A = \exp B$,  
$\check\Psi = A \ga_{\check{\xi}}:M \to \La_{\alg}(\O(n,\C)/C)$ is a meromorphic complex extended solution with $[\check\Psi] = \check\Phi$; explicitly, there is a loop $\eta \in \Omega\O(n)/C$ such that
$\la^{-\wt r/2}\pi\circ\check\Phi = \eta [\check{A}\ga_{\check{\xi}}]$.
   
Set $r = 2\check\xi_n$, then $\ga_{\xi} = \la^{r/2}\ga_{\check\xi}$ is a canonical element in 
$\Omega_r\U(n)^{\rr}$ and $A\ga_{\xi}:M \to \La_{\alg}\O(n,\C)$ is a complex extended solution.
Set $\Phi = [A\ga_{\xi}]:M \to \Omega_r\U(n)^{\rr}$.  Then $\wt\Phi\Phi^{-1}:M \to \Omega_{\alg}\U(n)$ satisfies
$\pi\circ(\wt\Phi\Phi^{-1}) = \eta\la^{(\wt r-r)/2}$, which is independent of $z \in M$.  Hence
$\wt\Phi\Phi^{-1}$ is also independent of $z$, i.e., is a loop in $\U(n)$, so that $\Phi$ is equivalent to $\wt\Phi$ and we are done.
\end{proof}

\begin{remark} \label{rem:parity}
(i) Let $f$ be a holomorphic map $M \to \O(2m)/\U(m)$ \ $(m >1)$ so that $f$ is a maximally isotropic
holomorphic subbundle of $\CC^{2m}$, then $f$ has polynomial associated extended solution
$\wt\Psi = \pi_{f} + \la\,\pi_{f}^{\perp}:M \to \Omega_1\U(2m)^{\rr}$.
The above proof constructs the extended solution
 $\check{\Phi} =  \la^{-1/2}\wt\Psi  = (1/\la^{1/2})\pi_{f} + \la^{1/2}\,\pi_{f}^{\perp}:M \to \Omega(\SO(2m)/C)$ which can be written in the form $[A\ga_{\check{\xi}}]$ with 
$\check{\xi} = \ii\diag(1/2,\ldots,1/2,-1/2,\ldots,-1/2)$, a canonical element of $\o(2m)$, and $A$ as in \eqref{A-r1}.  Then $\Psi = \la^{1/2}\check{\Phi} = \wt\Psi$ is of the form $[A\ga_{\xi}]$ with $\xi=\ii\diag(1,\ldots,1,0,\ldots,0)$,
a canonical element of $\Omega_1\U(2m)^{\rr}$, and $A:M\to \A^{\rr}_{\xi}$.

(ii) Given $\xi$ and $A \in \Sol_{\xi}^{\rr}$, we can find the extended solution $\Phi = [A\ga_{\xi}]$ and the resulting harmonic map $\Phi_{-1}$ explicitly from $W = A\ga_{\xi}\H_+$ as a product of unitons, by using the alternating factorization
\cite[\S 6.1]{unitons} given by \eqref{alt-fact}, \eqref{alpha} and \eqref{fact}.
In the $S^1$-invariant case, we have the simpler procedure: set $\al_i = $ the span of the columns $c_j$ of $A$ with $\xi_j<i$; then the corresponding factors $\pi_{\al_i} + \la \pi_{\al_i}^{\perp}$ are unitons which commute and give the Segal, Uhlenbeck and alternating factorizations depending on the order in which they are written.
\end{remark}

\subsection{Adding a border to increase dimension} \label{subsec:add-border}

We will give a method of finding parametrizations of complex extended solutions of finite uniton number from a Riemann surface $M$ to $\O(n)$ by induction on the dimension $n$.  Our starting point is Proposition \ref{prop:BuGu-On} which reduces the problem to finding, for each canonical element $\xi$, all meromorphic maps $A:M \to \A_{\xi}^{\rr}$ satisfying the extended solution equation \eqref{GrM}.
We shall give an algorithm for parametrizing such $A$.

So let $\xi = \ii\diag(\xi_1,\xi_2, \ldots,\xi_{n-1},\xi_n)$
be a canonical element of $\Omega_r\U(n)^{\rr}$ for some $r \in \N$, $n \geq 3$;
denote its type by $(t_0,t_1,\ldots, t_{r-1}, t_r)$.
Set $\wt\xi = \ii\diag(\xi_2,\ldots, \xi_{n-1})$. 
Then, unless it has type $(1,1)$, $\wt\xi$ is a canonical element of
$\Omega_{\wt r}\U(n-2)^{\rr}$  
whose type $(\wt t_0, \ldots, \wt t_{\wt r})$ is equal to
$(t_0\!-\!1,t_1,\ldots, t_{r-1}, t_r\!-\!1)$ with $\wt r = r$ if
$t_0\ ( =t_r) \geq 2$, and $(t_1,\ldots, t_{r-1})$ with
$\wt r = r-2$ otherwise.  If $\wt\xi$ has type $(1,1)$ and so is not canonical, then $n=4$ and $\xi$ is of type $(2,2)$; we will treat that case separately.

Given $A:M \to \Omega\O(n,\C)$ with values in
$\A_{\xi}^{\rr}$,
the matrix $\wt{A}$ obtained by \emph{removing the border}, i.e., $\wt{A} = (a_{ij})_{i,j=2,\ldots,n-1}$ defines a map from $M$ to $\A_{\wt\xi}^{\rr}$.
Conversely, given $\wt A = (a_{ij})_{i,j=2,\ldots, n-1}:M \to \A_{\wt\xi}^{\rr}$ 
we define a map $A:M \to \A_{\xi}^{\rr}$ by a process of \emph{adding a border}. 
This consists of adding a \emph{new top row} \ 
$(a_{12}, \ldots, a_{1,n-1})$, \emph{new last column}
$(a_{2n}, \ldots, a_{n-1,n})^{\T}$ and \emph{new top-right element} $a_{1n}$, and then completing the border by  setting $a_{i1}=\delta_{i1}$ and $a_{nj} = \delta_{nj}$ for $i,j=1,\ldots,n$. 
Note that our definitions of `new top row' and `new last column' exclude the new top-right element $a_{1n}$. Note also that, given $\wt{A}$ and either the new top row or the new last column, we can find the rest of the matrix by imposing the complex-orthogonality \eqref{cx-orthog} of the columns $c_i$ of $A$; in fact, using $(c_i,c_n)=0$ for $i=2,\ldots, n-1$ in turn gives the new top row from the new last column or vice-versa, and then using $(c_n,c_n) = 0$ gives the new top-right element.  We refer to this as completing the matrix \emph{by algebra}.   Note that, although removing the border preserves symmetry and $S^1$-invariance (by Remark \ref{rem:A}(v)), adding a border may destroy these, depending on the data chosen.

The following lemma underpins the induction step.
For a canonical element $\xi$ of type $(t_0,\ldots, t_r)$, define integers $0 = T_{r+1} < T_r < \cdots < T_0 = n$ by $T_k = \sum_{j=k}^r t_j$.
Note that $\xi_i=k$ precisely when $T_{k+1} < i \leq T_k$.

\begin{lemma} \label{lem:algebra}
Let $\xi$ be a canonical element of\/ $\Omega_r\U(n)^{\rr}$
(Definition \ref{def:can-el-R}) not of type $(2,2)$, and let $A = (a_{ij})_{i,j=1,\ldots, n}:M \to \A_{\xi}^{\rr} \subset \Omega\O(n,\C)$ be holomorphic.
Define $\wt A:M \to \Omega\O(n-2,\C)$
by
$\wt A = (a_{ij})_{i,j = 2, \ldots, n-1}$.  Then
$\wt A$ is holomorphic and has values in $\A_{\wt\xi}^{\rr}$ for the canonical element $\wt\xi$ obtained from $\xi$ as above.

\begin{enumerate}
\item[(i)] Suppose that $A:M \to \A_{\xi}^{\rr}$ satisfies the extended solution equation \eqref{GrM}.  Then so does $\wt{A}:M \to \A_{\wt\xi}^{\rr}$\,.

\item[(ii)] Conversely, suppose that
$\wt A:M \to \A_{\wt\xi}^{\rr}$ satisfies \eqref{GrM}.  Then the following are equivalent$:$
\begin{enumerate}
\item[(a)]  $A:M \to \A_{\xi}^{\rr}$ satisfies
\eqref{GrM}$;$

\item[(b)] the entries in the new top row satisfy \eqref{GrM1}, i.e., 
\be{GrM1j}
a_{1j}' = \sum_{i:\,\xi_i > \xi_j}\la^{\xi_i-\xi_j-1}\rho_{ij}'a_{1i} \quad \mod \la^{r-\xi_j-1} \qquad (j=T_r,\ldots, n-1);
\ee

\item[(c)] the entries of the new last column satisfy \eqref{GrM1}, i.e.,
\be{GrMin}
a_{in}' = \sum_{j:\, \xi_i \geq \xi_j > 0}\la^{\xi_j-1}\rho_{jn}'a_{ij} \quad \mod \la^{\xi_i-1}
\qquad (i= 2,\ldots, T_1).
\ee
\end{enumerate}
\end{enumerate}
\end{lemma}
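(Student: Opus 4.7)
The plan is to derive both parts from one structural observation. Writing $\Psi = A\ga_{\xi}$ and $P = \la\Psi^{-1}\Psi_z = \la\ga_{\xi}^{-1}A^{-1}A_z\ga_{\xi}$, the extended solution equation \eqref{GrM} is equivalent to each entry of $P$ being polynomial in $\la$. Since $A \in \O(n,\C)$, differentiating $A^{\TT}A = I$ gives $(A^{\TT}A_z)^{\TT} = -A^{\TT}A_z$, and since a direct calculation yields $\ga_\xi^{\TT} = \la^r \ga_\xi^{-1}$, one obtains the global symmetry $P^{\TT} = -P$, equivalently $P_{jk} = -P_{\bar k \bar j}$ for all $j, k$. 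Thus, for every pair $(j,k)$, the ES equation at $(j,k)$ holds iff the ES equation at $(\bar k, \bar j)$ does.

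For part (i), I would restrict the entry form \eqref{GrM1} of the ES equation for $A$ at an interior position $(i,k)$ with $i, k \in \{2, \ldots, n-1\}$. The summation index $j = 1$ contributes $a_{i1} = \delta_{i1} = 0$; the index $j = n$ would require $\xi_n = 0 > \xi_k \geq 0$, which is impossible. The remaining sum, over $j \in \{2, \ldots, n-1\}$, is literally the ES equation for $\wt A$ at $(i,k)$, with identical $\rho_{jk}$.

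For part (ii), assume $\wt A$ satisfies its ES equation. The ES equations for $A$ not already covered are those at positions $(i,k)$ with $i$ or $k$ in $\{1, n\}$. The cases $(i,1)$ and $(n,k)$ are vacuous (they require respectively $\xi_i > r$ and $0 > \xi_k$); what remains are the new top row $(1,k)$ (statement~(b)), the new last column $(i,n)$ (statement~(c)), and the top-right corner $(1,n)$. Using the symmetry $t_i = t_{r-i}$ of the type, one has $T_r = n - T_1$, and the involution $(j,k)\mapsto(\bar k, \bar j)$ restricts to a bijection between the top row index set $\{(1,k) : T_r < k \leq n-1\}$ and the last column index set $\{(\bar k, n) : 2 \leq \bar k \leq T_1\}$. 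Via $P_{1k} = -P_{\bar k n}$, the top row ES equations and last column ES equations are equivalent, giving (b)$\Leftrightarrow$(c). The corner position $(1,n)$ is self-conjugate under the involution, so $P_{1n} = -P_{1n}$, hence $P_{1n} \equiv 0$ and the top-right ES equation is automatic. Combining these, (a)$\Leftrightarrow$(b)$\Leftrightarrow$(c).

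The main delicate point is verifying $P^{\TT} = -P$ carefully in the null basis, and matching the $\mod \la^{\xi_i - \xi_k - 1}$ form of equations \eqref{GrM1j} and \eqref{GrMin} with the polynomial-in-$\la$ characterisation of ES: the degree-$(\xi_i-\xi_k-1)$ term on each side is always $\rho_{ik}'$ by definition, so modding out this top degree is automatic, as already noted in the proof of Proposition~\ref{prop:GrM}. The remaining bookkeeping --- chiefly the bijection $k\leftrightarrow\bar k$ between top row and last column indices --- rests on the symmetry of the type $(t_0, \ldots, t_r)$ built into Definition~\ref{def:can-el-R}, and the exclusion of type $(2,2)$ ensures $\wt\xi$ is a bona fide canonical element so that the inductive hypothesis makes sense.
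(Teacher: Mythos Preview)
Your approach via the skew-symmetry $P^{\TT}=-P$ is genuinely different from the paper's and more conceptual.  The paper proves (c)$\Rightarrow$(b) and the corner equation by differentiating the orthogonality relations $(c_j,c_n)=0$ and substituting, then proves (b)$\Rightarrow$(c) by a somewhat involved downward induction on the row index $I$.  You instead observe that, once the interior block of $P$ is known to equal $\wt P$ (hence polynomial by hypothesis), condition (a) reduces to the first row and last column of $P$ being polynomial, and these are interchanged by $P^{\TT}=-P$ while the corner $P_{1n}$ is forced to vanish.  This bypasses the induction entirely.

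There is, however, one step you elide.  Conditions (b) and (c) are stated as instances of \eqref{GrM1} for entries of $A$, and you silently identify them with ``first row of $P$ polynomial'' and ``last column of $P$ polynomial''.  For (b) this identification is immediate: writing the defect $c_k'-\sum_j\la^{\xi_j-\xi_k-1}\rho_{jk}'c_j = Av^{(k)}$ with $v^{(k)}_j=\la^{\xi_j-\xi_k-1}(P_{jk}-\rho_{jk}')$, the hypothesis kills $v^{(k)}_j$ for interior $j$, so only $v^{(k)}_1$ survives and \eqref{GrM1j} at $k$ becomes exactly $P_{1k}=\rho_{1k}'$.  But for (c) the column index is $n$, and \emph{all} the $v^{(n)}_j=\la^{\xi_j-1}(P_{jn}-\rho_{jn}')$ are a priori unknown: \eqref{GrMin} says $(Av^{(n)})_i=0$ for $i\geq 2$ (the $\bmod\,\la^{\xi_i-1}$ is automatic since top degrees cancel, as you note), and to deduce $v^{(n)}_j=0$ for $j\geq 2$ you must apply $A^{-1}$, using that its first column is $e_1$ because $A$ is block unitriangular.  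The same issue affects the corner: $P_{1n}=0$ alone does not give \eqref{GrM1} at $(1,n)$, which involves all the $P_{jn}$; it only follows once (b) or (c) is in hand.  With this short linear-algebra step inserted, your proof is complete and arguably tidier than the paper's.
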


\begin{proof}
\emph{First, suppose that {\rm (c)} holds}.  As usual, let $c_1,\ldots, c_n$ denote the columns of $A$; let $\wc_1,\ldots, \wc_n$ denote the same columns omitting top and bottom entries, i.e.,
$\wt c_j = (a_{2j}, \ldots, a_{n-1,j})^{\T}$ (note that $\wt c_1$ has all entries zero).
As in Proposition \ref{prop:GrM}, hypothesis (c) is equivalent to
\be{(c)}
\wc_n' = \sum_{j:\ \xi_j >0} \la^{\xi_j-1}\rho_{jn}'\wc_j\,.
\ee
By complex-orthogonality \eqref{cx-orthog},  $(c_j,c_n) = 0$ \ $(j =2,\ldots, n)$. 
Expanding this gives
$a_{1j} + (\wc_j,\wc_n) = 0$, then 
differentiating the last equation gives
\be{a1j}
a_{1j}' = -(\wc_j',\wc_n) - (\wc_j,\wc_n')\,.
\ee
By \eqref{(c)}, the second term on the right-hand side of
\eqref{a1j} is
$(\wc_j,\wc_n')
= \sum_{i:\, \xi_i >0}\la^{\xi_i-1} \rho_{in}'(\wc_j,\wc_i)
= \la^{\xi_{\jbar}-1} \rho_{\jbar n}' = 0 \mod \la^{r-\xi_j-1}$
using complex-orthogonality for $\wt{A}$ and $\xi_{\jbar} = r-\xi_j$.

As for the first term on the right-hand side of \eqref{a1j}, by the extended solution equation \eqref{GrM} for $\wt{A}$, we have $\wc_j' = \sum_{i \geq 2: \xi_i > \xi_j} \la^{\xi_i-\xi_j-1}\rho_{ij}' \wc_i$ so that
$(\wc_j',\wc_n) = \sum_{i \geq 2: \xi_i > \xi_j} \la^{\xi_i-\xi_j-1}\rho_{ij}'(\wc_i,\wc_n)$.
Now $(\wc_i,\wc_n) + a_{1i} = (c_i,c_n)$ which is zero for $i \geq 2$ by \eqref{cx-orthog}. Hence,
$(\wc_j', \wc_n) = -\sum_{i \geq 2: \xi_i > \xi_j} \la^{\xi_i-\xi_j-1} \rho_{ij}'a_{1i}$, and
then, adding in the second term calculated above,
\eqref{a1j} gives
$a_{1j}' =  \sum_{i \geq 2: \xi_i > \xi_j} \la^{\xi_i-\xi_j-1}\rho_{ij}'a_{1i}$ $\mod \la^{r-\xi_{j}-1}$, which is equivalent to (b) by Remark \ref{rem:GrM}(i).

\medskip

We also see that \eqref{GrM} holds for the top-right entry, indeed,
expanding $(c_n,c_n) = 0$ gives 
$a_{1n} = -\half ( \wc_n, \wc_n)$.  Differentiating this and using
\eqref{(c)} gives
$a_{1n}' = - \sum_{i:\xi_i > 0} \la^{\xi_i-1}\rho_{in}' (\wc_i,\wc_n) = \sum_{i:\xi_i > 0} \la^{\xi_i-1}\rho_{in}' a_{1i}$.
So \emph{{\rm (c)} implies that \eqref{GrM} holds for all columns of $A$ including the last, i.e., {\rm (a)} holds}.

\medskip

\emph{Next, assume that {\rm (b)} holds}.  We prove that (c) holds by downward induction on $i \in [2,T_1]$.  
For $T_2 < i \leq T_1$ so that $\xi_i = 1$, \eqref{GrMin} is trivially true as it says
$a_{in}' = \rho_{in}'$.
We may thus use $I=T_2+1$ as the starting point of our induction.

We now use the notations $\hat{c}_k^i = (a_{1 k}, \ldots, a_{i-1,k})^{\T}$ for the part of $c_k$ `above' $a_{ik}$ and
$\check{c}_k^{\ibar} = (a_{\ibar+1,k}, \ldots, a_{nk})^{\T}$ for the part of $c_k$ `below' $a_{\ibar k}$\,; note these are both columns of length $i-1$.
Suppose \eqref{GrMin} holds for $i > I$ for some
$I \in \{2,\ldots, T_2\}$.   We show that it holds for $i=I$,
i.e., that
\be{aIn'}
a_{In}' = \sum_{j:\,\xi_I \geq \xi_j >0}\la^{\xi_j-1}\rho_{jn}'a_{Ij}
\mod \la^{\xi_I-1}. 
\ee
Clearly, $a_{In} + (\hat{c}_{\Ibar}^{\Ibar}, \check{c}_n^I) =(c_{\Ibar},c_n)$ which is zero since $\Ibar > 1$.   Differentiating this gives
\be{ain'}
(a_{In})' = - \bigl((\hat{c}_{\Ibar}^{\Ibar})', \check{c}_n^I \bigr)
	- \bigl(\hat{c}_{\Ibar}^{\Ibar}, (\check{c}_n^I)'\bigr).
\ee
By \eqref{GrM}, the first term on the right-hand side of \eqref{ain'} is
$\bigl((\hat{c}_{\Ibar}^{\Ibar})', \check{c}_n^I \bigr)
	= \sum_{j:\,\xi_j > \xi_{\Ibar}} \la^{\xi_j-\xi_{\Ibar}-1}\rho_{j\Ibar}'(\hat{c}_j^{\Ibar}, \check{c}_n^I)$.
But $(\hat{c}_j^{\Ibar}, \check{c}_n^I) = (c_j,c_n) = \delta_{\jbar n}$ by \eqref{cx-orthog}, so that
$\bigl((\hat{c}_{\Ibar}^{\Ibar})', \check{c}_n^I \bigr)
	= \la^{\xi_I-1}\rho'_{1\Ibar}$\,.
 
By the induction hypothesis, the second term on the right-hand side of \eqref{ain'} is
\be{second}
\bigl(\hat{c}_I^I, (\check{c}_n^I)'\bigr)
	= \sum_{j:\, \xi_j >0} \la^{\xi_j-1}\rho_{jn}' \bigl(\hat{c}_{\Ibar}^{\Ibar},
\check{c}_j^I\bigr).
\ee
We show the general term in the sum on the right-hand side of \eqref{second} is given by
\be{general}
\la^{\xi_j-1} \rho_{jn}'
		\bigl(\hat{c}_{\Ibar}^{\Ibar}, \check{c}_j^I\bigr)
= -\la^{\xi_j-1} \rho_{jn}' a_{Ij} \mod \la^{\xi_I-1}.
\ee	
First, $(\hat{c}_{\Ibar}^{\Ibar}, \check{c}_j^I) + a_{Ij} = (c_{\Ibar}, c_j)$,
which is zero for $j \neq I$ by \eqref{cx-orthog}, so \eqref{general} holds for this case.
On the other hand, if $j = I$, then the left-hand side of \eqref{general} is zero since
$\check{c}_I^I$ is a zero column and the right-hand side is a multiple of $\la^{\xi_I-1}$, so the two sides are equal $\mod \la^{\xi_I-1}$ as required.

Substituting \eqref{general} into \eqref{second} and then into \eqref{ain'} we obtain \eqref{GrMin} for $i=I$ completing the induction step, and so (c) holds.  This completes the proof of the lemma.
\end{proof}

\subsection{Parametrization of extended solutions for $\O(n)$}
\label{subsec:param}

By a \emph{generalized derivative} of a meromorphic function $\nu$
on $M$ we mean a quotient $\nu'/e$, where ${}'$ denotes derivative with respect to some local complex coordinate $z$ on $M$ and
$e = \sum \beta'_j b_j$ is a finite sum which is not identically zero.
Here $\beta_j$ and $b_j$ are meromorphic functions on  $M$; note that the 
quotient $\nu'/e$ is independent of the choice of complex coordinate $z$ on $M$.  In particular, we shall call a generalized derivative of the form
$\nu'/\beta'$ with $\beta$ meromorphic and non-constant the \emph{generalized derivative of\/ $\nu$ with respect to $\beta$}; all generalized derivatives are locally of this form.  Away from points where $\beta$ has a pole or $\beta'$ is zero, $\beta$ gives an alternative complex coordinate to $z$ and
$\nu'/\beta'$ is the derivative of $\nu$ with respect to that complex coordinate.
When the denominator is unimportant, we shall often denote a generalized derivative by $\nu^{(1)}$ and higher generalized derivatives by $\nu^{(2)},\nu^{(3)}, \ldots$, and we set $\nu^{(0)} = \nu$;
 thus for any $d \geq 1$,
$\nu^{(d)}$ is the generalized derivative of $\nu^{(d-1)}$ given 
by
\be{nu}
\nu^{(d)} = (\nu^{(d-1)})'/e_{d-1}
\ee
where $e_{d-1} = \sum_j \beta'_{d-1,j} b_{d-1,j}$ is a finite
 sum with $\beta_{d-1,j}$ and $b_{d-1,j}$ meromorphic on $M$.
For example, if $\nu^{(1)} = \nu'/e_0$ then
$\nu^{(2)} = (\nu^{(1)})'/e_1 = (\nu'/e_0)'/e_1$.

Let $\M(M)$ denote the space of meromorphic functions on the surface.  Let $\xi$ be a canonical element of $\Omega_r\U(n)^{\rr}$ for some $r$, set
$p = p(\xi) = \sum_{k=1}^r \dim \g_k^{\rr}(\xi)$ and $p_1 = p_1(\xi) =  \dim\g_1^{\rr}(\xi)$. 
Recall the spaces $\Sol_{\xi}^{\rr}$ and $(\Sol_{\xi}^{\rr})_0$ from
Corollary \ref{cor:one-one}. 
 
\begin{proposition} \label{prop:alg} The algorithm below defines a mapping $h = h_{\xi}:\M(M)^p \to \Sol_{\xi}^{\rr}$.
It restricts to an algorithm which defines a mapping
$h_0 = (h_0)_{\xi}: \M(M)^{p_1} \to (\Sol_{\xi}^{\rr})_0$. 
\end{proposition}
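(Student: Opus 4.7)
The plan is to prove the existence and properties of $h_{\xi}$ by induction on the dimension $n$. The key reduction is Lemma \ref{lem:algebra}, which relates $A\in\A_{\xi}^{\rr}$ to its interior $\wt A\in\A_{\wt\xi}^{\rr}$ where $\wt\xi$ is the canonical element of $\Omega_{\wt r}\U(n-2)^{\rr}$ obtained from $\xi$ (shifted so that $\wt\xi_{n-2}=0$). The base cases ($n=2$ trivially, and the types $\xi$ of small $n$ for which $\wt\xi$ is not canonical, including the special type $(2,2)$ in $n=4$) are handled separately by direct construction.

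For the inductive step, assume $h_{\wt\xi}:\M(M)^{p(\wt\xi)}\to\Sol_{\wt\xi}^{\rr}$ has been constructed. Given input $(\nu_1,\ldots,\nu_p)\in\M(M)^p$, first form $\wt A = h_{\wt\xi}(\nu_1,\ldots,\nu_{p(\wt\xi)})\in\Sol_{\wt\xi}^{\rr}$. The remaining $p-p(\wt\xi)$ input functions are then used to specify the new last column entries $a_{in}$ ($i=2,\ldots,T_1$); the dimension count $p-p(\wt\xi)=T_1-1$ may be verified from the skew-symmetry of $\o(n,\C)$ matrices. The lowest-degree coefficient of each $a_{in}$ is taken as a free input function, and the higher-degree coefficients are defined, one at a time in increasing degree, by rearranging the coefficients of the extended solution equation \eqref{GrMin} to express each new coefficient as a generalized derivative of previously-defined quantities. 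This is precisely the step that replaces the integration in the Burstall--Guest method by algebraic operations and differentiation. Once the new last column is defined, Lemma \ref{lem:algebra}(ii)(c)$\Rightarrow$(a) ensures that the full extended solution equation \eqref{GrM} holds on $A$; the new top row is completed by algebra using $(c_i,c_n)=0$, and the top-right entry $a_{1n}$ via $(c_n,c_n)=0$, with its sharper degree bound guaranteed by Lemma \ref{lem:top-right-deg}.

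For the $S^1$-invariant part, Remark \ref{rem:A}(v) tells us $A\in(\Sol_{\xi}^{\rr})_0$ exactly when every entry of $A$ is independent of $\la$. In the recursion this is achieved by restricting the inputs to produce only the constant-in-$\la$ coefficients on the first block superdiagonal; these number exactly $p_1=\dim\g_1^{\rr}(\xi)$, and applying the same algorithm to just these inputs yields the required map $(h_0)_\xi:\M(M)^{p_1}\to(\Sol_{\xi}^{\rr})_0$.

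The main obstacle will be the combinatorial bookkeeping of the generalized-derivative substitutions on the new last column, particularly showing that the higher-degree coefficients of $a_{in}$ defined recursively give a consistent meromorphic matrix respecting complex-orthogonality \eqref{cx-orthog} and the degree constraints of $\A_\xi^{\rr}$. Extra care is needed when the type $(t_0,\ldots,t_r)$ of $\xi$ contains blocks with $t_j\geq 2$, since then multiple columns share a common $\xi$-value and the linear algebra of the coefficient equations becomes more intricate; this is also where the rider (R) on canonical elements of odd $r$ must be respected to keep the inductive hypothesis applicable.
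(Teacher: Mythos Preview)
Your overall inductive framework is sound and matches the paper's: induction on $n$ via border-removal, with Lemma \ref{lem:algebra} supplying the reduction step. You choose to parametrize the new last column rather than the new top row; the paper works with the top row but explicitly remarks afterward that the last-column variant is equivalent, so this choice is legitimate.

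The genuine gap is in your description of how the new column is parametrized. You write that ``the lowest-degree coefficient of each $a_{in}$ is taken as a free input function, and the higher-degree coefficients are defined, one at a time in increasing degree \ldots\ as a generalized derivative of previously-defined quantities.'' This does not match the structure of \eqref{GrMin}. Equating $\la^q$-coefficients there gives
\[
(a_{in}^q)' \;=\; \sum_{j:\,1\le\xi_j\le q+1}\rho'_{jn}\,a_{ij}^{\,q-\xi_j+1},
\]
where $\rho_{jn}=a_{jn}^{\xi_j-1}$ is the \emph{top} coefficient of $a_{jn}$. Even at $q=0$ (the $S^1$-invariant case) these relations couple the $a_{in}^0$ among themselves: the entries with $\xi_i=1$ are unconstrained, while those with $\xi_i\ge 2$ satisfy genuine first-order ODEs in terms of them. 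So one cannot take all $T_1-1$ constant terms as independent inputs, and the higher $\la$-coefficients are certainly not obtained from the lower ones by differentiation---the equations go the other way.

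What actually avoids integration is a repeated \emph{replacement of parameters}, which you allude to but do not carry out. The paper takes the top coefficients as \emph{initial} data, integrates by parts to write the next entry as an integral $\int e\,\nu_{K-1}$, and then absorbs that integral by introducing a new parameter $\nu_K$ with $\nu_{K-1}=(\nu_K)'/e$; iterating, every entry becomes a rational expression in the \emph{final} parameters and their generalized derivatives. This substitution mechanism is the heart of the proof and is missing from your outline. You also omit the degenerate case $e\equiv 0$, where a stray constant of integration must be removed by premultiplying $A$ by a suitable constant matrix $E\in\O(n,\C)$.
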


{\it Proof.}
\emph{We first give the algorithm which defines $h_0:\M(M)^{p_1} \to (\Sol_{\xi}^{\rr})_0$
for any $\xi$.}   
This is trivial when $n=1,2$, as $\O(n,\C) = \{I\}$ so $\xi=\ii I$ and $\Sol_{\xi}^{\rr} = (\Sol_{\xi}^{\rr})_0 = \{I\}$.
We use these as a base for an induction on the dimension $n$: in the induction step $n$ is increased by $2$.

Let $n \geq 3$.  Given a canonical element $\xi=\ii\diag(\xi_1,\ldots,\xi_n)$ of $\Omega_r\U(n)^{\rr}$ 
define a canonical element $\wt\xi =\ii\diag(\wt\xi_1,\ldots, \wt\xi_{n-2})$ of $\Omega_{\wt r}\U(n-2)^{\rr}$ as in \S \ref{subsec:add-border}.
As induction hypothesis, \emph{suppose that we have determined 
$\wt h_0:\M(M)^{\wt p_1} \to (\Sol_{\wt\xi}^{\rr})_0$}. 
We show how to find $h_0:\M(M)^{p_1} \to (\Sol_{\xi}^{\rr})_0$
explicitly from $\wt h_0$.
Recall that all $A:M \to (\A_{\xi}^{\rr})_0$ in $(\Sol_{\xi}^{\rr})_0$ are obtained from some
$\wt{A}:M \to (\A_{\wt\xi}^{\rr})_0$ in $(\Sol_{\wt\xi}^{\rr})_0$ by adding a border as in \S \ref{subsec:add-border}.  We shall find a parametrization of the new first row
$(a_{12}, \ldots, a_{1,n-1})$ by solving the equation
\eqref{GrM1j} which now reads
\be{GrM1k}
a_{1k}' = \sum_{j:\,\xi_j = \xi_k+1}\rho_{jk}'a_{1j}
\quad  \qquad (k=t_r+1,\ldots, n-1).
\ee
Now $a_{1k} = \delta_{1k}$ when $\xi_k =r$, i.e., 
for $1 \leq k \leq t_r\ (= t_0)$. 
The next $t_{r-1}\ (=t_1)$ entries $\{a_{1k} : t_r+1  \leq k \leq T_{r-1}\}$
(where $T_{r-1} = t_{r-1}+t_r$) give the entries of $\g_1^{\rr}(\xi)$ which are not in $\g_1^{\rr}(\wt{\xi})$, thus
$t_{r-1} = t_1 = \dim\g_1^{\rr}(\xi) -\dim\g_1^{\rr}(\wt\xi)$. 
These entries $a_{1k}$ have no equation to satisfy: \eqref{GrM1k} holds identically for them. 

To find our parametrization, we initially parametrize the above entries by meromorphic functions $\nu = (\nu_{0,1},\ldots, \nu_{0,t_{r-1}})$, setting $a_{1,t_r+i} = \nu_{0,i}$ \ $(i=1,\ldots, t_{r-1})$.  These are essentially the parameters used in \cite{burstall-guest}, however, they will not usually be our final choice of parameters.
For the next entry, \eqref{GrM1k} reads 
\be{start}
a_{1k}' = \sum_{j=t_r+1}^{T_{r-1}} \rho_{jk}' a_{1j} =  \sum_{i=1}^{t_{r-1}} \rho_{i+t_r,k}' \nu_{0,i}
	\quad (k = T_{r-1}+1)\,.
\ee
By the inductive hypothesis, the $\rho_{jk}$ are known functions of the parameters $\mu$ for $\wt{A}$. We now replace our initial choice of parameters $\nu_{0,i}$ by a new choice $\nu_{1,i}$ of parameters where the `old' parameters $\nu_{0,i}$ are given in terms of the new ones by 
$\nu_{0,i} = \nu_{1,i}$ if $\rho_{i+t_r,k}'$ is identically zero, and 
$\nu_{0,i} =$ the generalized derivative $\nu_{1,i}^{(1)}:=(\nu_{1,i})'/\rho_{i+t_r,k}'$, otherwise.
Then integrating \eqref{start} gives
$$
a_{1k} = \sum_{i=1}^{t_{r-1}}b_{ik}(\mu)\nu_{1,i} 
\qquad (k = T_{r-1}+1) \,
$$
where $b_{ik}(\mu) = 0$ when $\rho_{i+t_r,k}'$ is identically zero, and $b_{ik}(\mu) = 1$ otherwise; thus the value of $b_{ik}(\mu)$ depends on $\mu$. Note that the previous entries
$a_{1k}$ can now be written in terms of the new parameters,
in fact,
\be{ind-hyp1}
a_{1k} = \nu_{0,1} = \sum_{i=1}^{t_{r-1}}
\bigl\{ b_{ik0}(\mu)\nu_{1,i} + b_{ik1}(\mu)\nu_{1,i}^{(1)}
\bigr\} \qquad (k =t_r+1,\ldots, T_{r-1})
\ee
for some functions $b_{ikp}(\mu)$ (which are here just $0$ or $1$).

We prove by induction that,
\emph{for each $K = 1,\ldots, n-1-T_{r-1}$, there are parameters
$\nu = (\nu_{K,1},\ldots,\nu_{K,t_{r-1}})$ with each $\nu_{K\!\!-1,i}$ equal either to $\nu_{K,i}$, or to a generalized derivative $\nu_{K,i}^{(p)}$ of
$\nu_{K,i}$ with respect to a function of $\mu$, such that} 
\be{ind-hypK}
a_{1k} = \sum_{i=1}^{t_{r-1}} \sum_{p=0}^{K\!\!-1}  b_{ikp}(\mu)
\nu_{K,i}^{(p)}
= \sum_{i=1}^{t_{r-1}}  b_{ik0}(\mu)\nu_{K,i} 
+ \sum_{i=1}^{t_{r-1}} \sum_{p=1}^{K\!\!-1}  b_{ikp}(\mu)
\nu_{K,i}^{(p)}
\quad (T_{r-1}+1 \leq k \leq T_{r-1}+K).
\ee
Here each $b_{ikp}$ is now a rational function of the parameters $\mu$ for $\wt{A}$ and the derivatives of those parameters,
and $\{\ \ \}^{(p)}$ denotes a $p$th generalized derivative as explained above.
This is established for $K=1$ by \eqref{ind-hyp1}.
 
Suppose we know that, \emph{for some $K$ with $2 \leq K \leq n-1-T_{r-1}$, \eqref{ind-hypK} holds with $K$ replaced by
$K\!\!-1$, i.e.},
\be{ind-hypK-1}
a_{1k} = \sum_{i=1}^{t_{r-1}} \sum_{p=0}^{K-2}{\wt b}_{ikp}(\mu)
\nu_{K\!\!-1,i}^{(p)} \qquad (T_{r-1}+1 \leq k \leq T_{r-1}+K-1);
\ee
then we shall deduce that \eqref{ind-hypK} holds.
{}From \eqref{GrM1k}  we have
$a_{1K}' = \sum_{j=1}^{K\!\!-1} \rho_{jK}' a_{1j}$.  Using the induction hypothesis \eqref{ind-hypK-1} for each $a_{1j}$ gives us
$$
a'_{1K} = \sum_{i=1}^{t_{r-1}} \Bigl\{ \sum_{j=1}^{K\!\!-1} \rho_{jK}'
\wt{b}_{ij0} \nu_{K\!\!-1,i} + \sum_{p=1}^{K-2} c_{ip} \nu_{K\!\!-1,i}^{(p)} \Bigr\}
\quad \text{where} \quad
c_{ip} = \sum_{j=1}^{K\!\!-1} \rho'_{jK} \wt{b}_{ijp}\,.
$$
We integrate by parts each term in the last sum, first interpreting
$\nu_{K\!\!-1,i}^{(p)}$ using \eqref{nu}, as follows:
$$
\int\! c_{ip} \nu_{K\!\!-1,i}^{(p)} = \int\! \wt{c}_{ip} (\nu_{K\!\!-1,i}^{(p-1)})'
= \wt{c}_{ip}\nu_{K\!\!-1,i}^{(p-1)} - \int\! \wt{c}_{ip}' \nu_{K\!\!-1,i}^{(p-1)}
$$
for some functions $\wt{c}_{ip}(\mu)$.  Repeating the procedure $p$ times gives
\be{a1K}
a_{1K} = \sum_{i=1}^{t_{r-1}} \Bigl\{ \sum_{p=0}^{K-2} d_{ip}
\nu_{K\!\!-1,i}^{(p)} +  \int\! e_i \nu_{K\!\!-1,i} \Bigr\} + c_{K}
\ee
for some functions $d_{ip}(\mu), f_i(\mu)$ and constant of integration $c_K$.
Here, for each $i=1,\ldots, t_{r-1}$, $e_i = \sum_{j=1}^{K\!\!-1}\rho_{jK}'\wt{b}_{ij0}
+ f_i'$\,.

We now replace the parameters $\nu_{K\!\!-1,i}$ by `new' parameters $\nu_{K,i}$ where the `old' parameters $\nu_{K\!\!-1,i}$ are given in terms of the new ones as follows.
If $e_i$ is identically zero, $\nu_{K\!\!-1,i} = \nu_{K,i}$; we call this a \emph{degenerate step} and say that the algorithm is \emph{degenerate} if this ever occurs.
  Otherwise, $\nu_{K\!\!-1,i}$ is equal to the generalized derivative $(\nu_{K,i})'/e_i$, so that the integral in \eqref{a1K} evaluates to $\nu_{K,i}$.
  If not \emph{all} $e_i$ are identically zero, we may absorb the constant $c_K$ of integration into one of the new parameters $\nu_{K,i}$; however, if all $e_i$ are identically zero, then we cannot.
In this case, we remove $c_K$ by premultiplying $A$ by a matrix $E = (e_{ij}) \in \O(n,\C)$ which is the identity matrix except that $e_{1K}=-c$, $e_{n+1-K,n} = c$ and, if $n$ is odd and $K = (n+1)/2$,  $e_{1n} = -\half c^2$.  This does not alter $\wt{A}$ or any previous entries $a_{1k}$ \ $(k < K)$ of the new first row.
   This establishes \eqref{ind-hypK} for $k=K$.  

Finally, for $k < K$ we replace the $\nu_{K\!\!-1,i}$ in \eqref{ind-hypK-1} by the expressions in terms of $\nu_{K,i}$ just given, and the induction step is complete.
This gives the new first row $(a_{12}, \ldots, a_{1,n-1})$;
we complete the matrix finding the new last column
$(a_{2n}, \ldots, a_{n-1,n})^{\T}$ and new top-right element $a_{1n}$ by algebra, i.e.,
imposing that $A$ has values in $\O(n,\C)$ by using \eqref{cx-orthog}, see \S \ref{subsec:add-border}.
We have now given an algorithm for finding $h_0:\M(M)^{p_1} \to (\Sol_{\xi}^{\rr})_0$ from $\wt h_0$ which completes the induction on dimension.

Note that the subset of data where the algorithm is degenerate at some stage in the induction forms an algebraic subvariety of $\M(M)^{p_1}$; define $\M(M)^{p_1}_{\wt{\ND}}$ to be its complement. 

\emph{We now extend the algorithm to define a map
$h:\M(M)^p \to \Sol_{\xi}^{\rr}$.}
We follow the same method of adding a border, then the equations to satisfy for the first row are again \eqref{GrM1j} but now each element is a polynomial in $\la$; we write $a_{ij}^q$ for the coefficient of $\la^q$ in $a_{ij}$.
When $i=1$, for each $j$, $a_{1j}$ is a polynomial of degree at most
$\xi_i-\xi_j - 1 = r-\xi_j -1$.
We now equate coefficients of $\la^q$ in equation \eqref{GrM1k}.
For the highest possible degree on the left-hand side, $q = r-\xi_k -1$, there is no equation to satisfy since we are working $\!\!\mod \la^{r-\xi_k-1}$.
Thus our initial choice of data for the first row will be $\{a_{1k}^{r-\xi_k-1} : t_r+1 \leq k \leq n-1\}$;
note that this does not include $a_{1n}$, which is determined by algebra, see \S \ref{subsec:add-border}.
We set $a_{1k}^{r-\xi_k-1} = \nu^0_{k-t_r}$ \ $(k=t_r+1,\ldots, n-1)$ giving our initial choice of parameters $\nu^0 = (\nu^0_1,\ldots, \nu^0_{n-1-t_r})$.

For $q < r-\xi_k-1$, by equating coefficients of $\la^q$ we obtain the equations:
\be{la-q}
(a_{1k}^q)' = \sum_{j:\,0 \leq \xi_j - \xi_k - 1 \leq q}
\rho'_{jk} a_{1j}^{q-(\xi_j-\xi_k-1)}
\qquad (T_{r-1}+1 \leq k \leq n-1,\  0 \leq q < r-\xi_k-1).
\ee
Note that the sum is over the $q+1$ blocks preceding that containing $a_{1k}$: since $q+\xi_k+1 < r$, this never includes the entries $\{a_{1j}:\xi_j = r\}$ in the left-most block.  
Note also that, for each $j$ the sum concerns the coefficient of $\la^{q-(\xi_j-\xi_k-1)}$ of $a_{1j}$;
since $q < r-\xi_k-1$, this is at most $r-\xi_j -1$, the maximum possible power for $a_{1j}$.
Finally note that the condition $\xi_k \leq r-q-1$ is saying that $a_{1k}$ is in the block where
$\xi_k = r-q-1$ or in a block to the right of that.
For clarity, we write out the first three equations of \eqref{la-q}: 
\begin{eqnarray*}
(a_{1k}^0)' &=& \sum_{j:\,\xi_j-\xi_k-1=0} \rho'_{jk} a_{1j}^0 \qquad (k \leq n-1,\,\xi_k \leq r-1), \\
(a_{1k}^1)' &=&  \sum_{j:\,\xi_j-\xi_k-1=0} \rho'_{jk} a_{1j}^1
			\!\! + \sum_{j:\,\xi_j-\xi_k-1=1} \rho'_{jk} a_{1j}^0
									\qquad (k \leq n-1,\,\xi_k \leq r-2),\\
(a_{1k}^2)' &=& \sum_{j:\,\xi_j-\xi_k-1=0} \rho'_{jk} a_{1j}^2
+ \!\! \sum_{j:\,\xi_j-\xi_k-1=1} \rho'_{jk} a_{1j}^1 + \!\!
\sum_{j:\,\xi_j-\xi_k-1=2} \rho'_{jk} a_{1j}^0
										\qquad \! (k \leq n-1,\,\xi_k \leq r-3).
\end{eqnarray*}

We solve \eqref{la-q} for each $k$ by induction on $q$ with initial data $\nu^0$ as above; we omit the details.

Putting the initial data for each new first row together shows that our \emph{initial data} for finding
$A \in \Sol_{\xi}^{\R}$ is $\{a_{ij}^{\xi_i-\xi_j-1} : \xi_i > \xi_j\,, j < \ibar\, \}$, i.e., the $\la^q$-coefficient of each entry of $A$ on the part of the $(q+1)$st block superdiagonal of $A$ above the second diagonal, for $q=0, 1,2,\ldots, r-1$. 
Note that this initial data is related to that in \cite{burstall-guest} by the exponential map; it is, however, our \emph{final data} which forms $\mu \in \M(M)^p$.   

Again, the subset of (final) data where the algorithm is degenerate at some stage in the induction forms an algebraic subvariety of $\M(M)^p$; define $\M(M)^p_{\wt{\ND}}$ to be its complement.
\qed

\medskip

We now see how the above algorithm gives parametrizations of extended solutions of canonical type: recall that by Proposition \ref{prop:BuGu-On}, any extended solution is equivalent to one of canonical type.

\begin{theorem} \label{th:gen}
Let $M$ be a Riemann surface.  Let\/ $\xi=\ii\diag(\xi_1,\ldots,\xi_n)$ be a canonical element of\/ $\Omega_r\U(n)^{\rr}$ for some $r \in \N;$
set $p = p(\xi) = \sum_{k=1}^r \dim \g_k^{\rr}(\xi)$ and
$p_1 = p_1(\xi) = \dim \g_1^{\rr}(\xi)$.   Let
$h=h_{\xi}:\M(M)^p \to \Sol_{\xi}^{\rr}$ and $h_0 = (h_0)_{\xi}:\M(M)^{p_1} \to (\Sol_{\xi}^{\rr})_0$ be the mappings of Proposition \ref{prop:alg}.
\begin{enumerate}
\item[(i)] The maps $h$ and $h_0$ are locally surjective up to replacing $A$ by $EA$ for some constant matrix
$E \in \O(n,\C)$.

\item[(ii)] The map $h$ restricts to a locally surjective mapping $h:\M(M)^p_{\wt{\ND}} \to (\Sol_{\xi}^{\rr})_{\wt{\ND}}$, $\mu \mapsto A(\mu)$ to an open dense subset of\/ $\Sol_{\xi}^{\rr}$; this map
is algebraic in the sense that each entry of\/ $A(\mu)$ is polynomial
in $\la$ with coefficients rational functions of the $\mu_i$ and their derivatives of order less than or equal to $n-3$.  
  
The extended solution $\Phi = [A\ga_{\xi}]:M \to \Omega_r\U(n)^{\rr}$ corresponding to a choice of
$\mu \in \M(M)^p$  is
given explicitly by \eqref{alt-fact}, \eqref{alpha} and \eqref{fact}$;$ each entry of\/ $\Phi$ is
polynomial in $\la$ with coefficients rational functions of the $\mu_i$, their derivatives of order less than or equal to $n-3$ and the complex conjugates of these.

\item[(iii)] The map $h_0$ restricts to a locally surjective mapping
$h_0:\M(M)^{p_1}_{\wt{\ND}} \to (\Sol_{\xi}^{\rr})_0^{\wt{\ND}}$ to an open dense subset of\/ $(\Sol_{\xi}^{\rr})_0$;
this map is algebraic in the sense that each entry of\/ $A(\mu)$ is a rational function of the $\mu_i$ and their derivatives of order less than or equal to $n-3$.

In this case, the extended solution $\Phi = [A\ga_{\xi}]:M \to \Omega_r\U(n)^{\rr}$ corresponding to a choice of 
$\mu \in \M(M)^{p_1}$ is $S^1$-invariant and is given explicitly by \eqref{fact} where
$\al_i$ is the span of columns $c_j$ of\/ $A$ with $\xi_j < i$.
\end{enumerate}
 \end{theorem}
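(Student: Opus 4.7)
My plan is to prove the three parts together by induction on $n$, piggybacking on the algorithm constructed in the proof of Proposition \ref{prop:alg}. The essential reduction step is Lemma \ref{lem:algebra}: removing the outermost row and column sends $\Sol_\xi^{\rr}$ into $\Sol_{\wt\xi}^{\rr}$, and given $\wt A \in \Sol_{\wt\xi}^{\rr}$ the only remaining freedom in extending to $A \in \Sol_\xi^{\rr}$ sits in the new top row, which must satisfy the linear system \eqref{GrM1k} (in the $S^1$-invariant case) or its $\la$-graded version \eqref{la-q}; the new last column and top-right entry are then forced by complex-orthogonality. Combined with Proposition \ref{prop:BuGu-On} and Corollary \ref{cor:one-one}, this reduces (i)--(iii) to proving that the algorithmic parametrization of the new top row is locally surjective onto all solutions of that system.

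For local surjectivity, given $A \in \Sol_\xi^{\rr}$, I would apply the inductive hypothesis to $\wt A$ to find data $\wt\mu$ with $\wt h(\wt\mu) = \wt E \wt A$ for some constant $\wt E \in \O(n-2,\C)$; such an $\wt E$ lifts to $E \in \O(n,\C)$ acting as the identity on the border. The new-top-row entries are then recovered by inverting, column by column, the finite sequence of changes of parameter $\nu_{K-1,i} \rightsquigarrow \nu_{K,i}$ used in the algorithm, each of which is either trivial (a degenerate step) or of the form $\nu_{K-1,i} = (\nu_{K,i})'/e_i$; the latter relation is locally invertible on $M$ by antidifferentiation. Any constant of integration $c_K$ appearing in \eqref{a1K} is absorbed into $\nu_{K,i}$ whenever some $e_i \not\equiv 0$, and in the fully degenerate case is removed by further left multiplication by the elementary orthogonal matrix exhibited in the proof of Proposition \ref{prop:alg}. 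Iterating this process to $K = n-1-T_{r-1}$ produces $\mu$ with $h(\mu) = EA$ (with $E$ the accumulated product), establishing (i); a parallel argument in the $\la$-independent case gives the $S^1$-invariant statement using Remark \ref{rem:A}(v).

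For (ii) and (iii), I would define $\M(M)^p_{\wt{\ND}}$ as the complement of the vanishing loci of the finitely many expressions $e_i$ produced along the recursion. These $e_i$ are differential polynomials in the previously chosen parameters, so their zero sets are proper algebraic subvarieties, and $\M(M)^p_{\wt{\ND}}$ is therefore open and dense; on it the inversion above involves no case distinction, giving surjection onto the corresponding open dense subset $(\Sol_\xi^{\rr})_{\wt{\ND}}$. The algebraicity and the derivative-order bound $n-3$ follow by unwinding the recursion, since at dimension $n$ the algorithm introduces at most $n-1-T_{r-1}-1 \leq n-3$ generalized differentiations and these chains do not accumulate across the inductive steps. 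Finally, the formula for $\Phi = [A\ga_\xi]$ in the $S^1$-invariant case reduces by Remark \ref{rem:A}(iv)--(v) to the uniton factorization \eqref{fact} with $\al_i = \spa\{c_j : \xi_j < i\}$, and in the general case $\Phi$ is recovered from $W = A\ga_\xi\HH_+$ by the alternating factorization \eqref{alt-fact}--\eqref{alpha}--\eqref{fact}, whose factors are rational in the entries of $A$ and their complex conjugates.

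The main obstacle I anticipate is the bookkeeping at the degenerate steps: one must verify that the elementary matrices $E$ used to soak up stray constants of integration are genuinely in $\O(n,\C)$ and do not disturb entries of $A$ that were already matched in earlier stages of the recursion. Here the rider (R) on canonical elements, which forces the block structure to be symmetric about the second diagonal, is what keeps the construction of each such $E$ compatible with the orthogonality constraint.
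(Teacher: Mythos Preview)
Your proposal is correct and follows essentially the same approach as the paper's own proof, which is extremely terse: it simply observes that the \emph{initial} parameters $\{a_{ij}^{\xi_i-\xi_j-1} : \xi_i > \xi_j,\ j < \ibar\}$ can be read off directly from the entries of a given $A$, and that the final parameters are then recovered from these by a finite chain of local antiderivatives (plus left-multiplication by the constant $E$'s in the degenerate case); locality of integration is what downgrades surjectivity to local surjectivity. Your inductive framing via Lemma~\ref{lem:algebra} is just an explicit unpacking of this, and the paper treats parts (ii) and (iii) with nothing more than ``the rest is clear'', so your additional detail there is harmless.

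One small misattribution: the rider (R) plays no role in verifying that the correction matrices $E$ lie in $\O(n,\C)$ or leave earlier entries undisturbed. That follows directly from the explicit form of $E$ given in the proof of Proposition~\ref{prop:alg} (identity except for the pair of entries $e_{1K}=-c$, $e_{n+1-K,n}=c$, and possibly $e_{1n}$), which is manifestly $\TT$-orthogonal and acts only on the new border. The rider is purely a constraint on which $\xi$ are admissible canonical elements when $r$ is odd.
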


Note that, since the value of $h(\mu)$ at a point of $M$ only depends on the germ of $\mu$ at that point, $h$ restricts to a map on $\M(U)^p$ for any open subset $U$ of $M$.
To say that $h$ is \emph{locally surjective} means that, given an extended solution $\Phi:M \to \Omega_r\U(n)^{\rr}$ of type $\xi$, there is a discrete set $D$ of points such that, for any point of $M \setminus D$, there is an open neighbourhood $U$ of $p$, such that $\Phi|_U = h(\mu)$ for some $\mu \in \M(U)^p$ defined on $U$. Similarly for $h_0$.

\begin{proof} (i)  Given a solution, we can read off the values of the \emph{initial} parameters which give it from its entries, viz.
$\{a_{ij}^{\xi_i-\xi_j-1} : \xi_i > \xi_j\,, j < \ibar\, \}$.  
The values of our final parameters $\mu_i$ can then be found from those initial parameters by a finite number of integrations, and premultiplication by constant matrices in $\O(n,\C)$ in the degenerate case.  As we can only integrate on a simply-connected open set and must avoid the discrete set of simple poles for every integration, this shows local surjectivity. 

(ii) Let $(\Sol_{\xi}^{\rr})_{\wt{\ND}}$ be the subset of solutions in $\Sol_{\xi}^{\rr}$ given locally by the algorithm with no degenerate steps.  The rest is clear; similarly for (iii).
\end{proof}

Since, by Proposition \eqref{prop:BuGu-On}, every harmonic map has an associated extended solution $\Phi$ in $\Sol_{\xi}^{\rr}$, and the formulae for $\Phi$ given by \eqref{alpha}, \eqref{alt-fact} and
\eqref{fact} introduce complex conjugates, we deduce 

\begin{corollary}
All harmonic maps of finite uniton number from a surface to $\O(n)$ are given locally as rational functions of a finite number of meromorphic functions $\mu_i$ and their derivatives of order less than or equal to $n-3$, together with the complex conjugates of those.
\qed \end{corollary}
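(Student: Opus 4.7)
The plan is to assemble directly the two main results of the section. Proposition \ref{prop:BuGu-On} reduces every harmonic map of finite uniton number into $\O(n)$ to an extended solution of the form $\Phi = [A\ga_{\xi}]$ with $A \in \Sol_{\xi}^{\rr}$, while Theorem \ref{th:gen} gives an algebraic parametrization of $A$ by finitely many meromorphic functions $\mu = (\mu_1,\ldots,\mu_p) \in \M(M)^p$ and their derivatives of order at most $n-3$. The only step still requiring attention is tracking the dependence on $\mu$ as we pass from the complex extended solution $A\ga_{\xi}$ to the $\U(n)$-valued $\Phi$ and then to the harmonic map.

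First I would fix a harmonic map $\varphi : M \to \O(n)$ of finite uniton number. Proposition \ref{prop:BuGu-On} furnishes a canonical element $\xi$ of $\Omega_r\U(n)^{\rr}$ and a meromorphic $A : M \to \A_{\xi}^{\rr}$ in $\Sol_{\xi}^{\rr}$ such that $\Phi = [A\ga_{\xi}]$ is an extended solution equivalent to one associated with $\varphi$; thus, up to left-equivalence, $\varphi = \pm\Phi_{-1}$ (or $\pm \ii\,\Phi_{-1}$ when $r$ is odd). Restricting to a simply connected neighbourhood $U$ of a generic point and invoking Theorem \ref{th:gen}(i),(ii), after possibly pre-multiplying by a constant matrix in $\O(n,\C)$ I obtain $\mu \in \M(U)^p$ with $A|_U = h_{\xi}(\mu)$, whose entries are polynomials in $\la$ whose coefficients are rational in the $\mu_i$ and their derivatives of order $\leq n-3$.

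Next I would recover $\Phi$ from $A$ using the alternating factorization \eqref{alt-fact}, \eqref{alpha}, \eqref{fact} applied to the Grassmannian model $W = A\ga_{\xi}\H_+$. The filtration $(W_i)$ and the evaluation $P_0\Phi_{i-1}^{-1}W_i$ are built from algebraic operations on the columns of $A\ga_{\xi}$, but each uniton factor $\pi_{\al_i} + \la\,\pi_{\al_i}^{\perp}$ requires orthogonal projection onto a subbundle spanned by those columns; writing such a projection in terms of a basis uses Hermitian inner products, and it is exactly here that complex conjugates of the entries of $A$ (hence of the $\mu_i$ and their derivatives) enter. Consequently each entry of $\Phi$, and then of $\varphi = \pm\Phi_{-1}$ (or $\pm\ii\,\Phi_{-1}$), is rational in the $\mu_i$, their derivatives of order $\leq n-3$, and the complex conjugates of both.

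The main item to verify carefully is the derivative-order bookkeeping. Since the algorithm in Proposition \ref{prop:alg} starts from the trivial cases $n=1,2$ and increases the dimension by two at each inductive step, and since each border-adding step introduces at most one additional generalised derivative for each of the retained parameters, the order of derivation accumulated after reaching dimension $n$ is bounded by $n-3$; the subsequent passage from $A$ to $\Phi$ via uniton projections and then to $\varphi$ via evaluation at $\la=-1$ does not differentiate, so the bound survives. I expect this degree count to be the only slightly delicate point; everything else is direct quotation of the preceding results.
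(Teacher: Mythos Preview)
Your proposal is correct and follows essentially the same route as the paper: combine Proposition~\ref{prop:BuGu-On} with Theorem~\ref{th:gen}, and observe that the passage from $W=A\ga_{\xi}\H_+$ to $\Phi$ via the alternating factorization \eqref{alt-fact}, \eqref{alpha}, \eqref{fact} is where complex conjugates enter (through orthogonal projections). The paper's own proof is just this sentence.

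One caveat: your final paragraph on derivative-order bookkeeping is both unnecessary and not quite right. It is unnecessary because the bound $n-3$ is already part of the statement of Theorem~\ref{th:gen}(ii); you may simply cite it. It is not quite right because the claim that ``each border-adding step introduces at most one additional generalised derivative for each of the retained parameters'' does not match what the algorithm actually does: the parameters for $\wt A$ are not differentiated further, while the \emph{new} parameter introduced at the step from dimension $n-2$ to $n$ may be differentiated up to $n-3$ times all at once (see the type $(1,1,\ldots,1)$ case in Theorem~\ref{th:S1-11}, where $a_{12}=\nu^{(n-3)}$). Since the number of border-adding steps is roughly $n/2$, your ``one derivative per step'' count would not in any case yield $n-3$. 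Drop that paragraph and rely on Theorem~\ref{th:gen}(ii) directly.
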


\begin{remark}
(i) The algorithm in the proof finds the entries of $A$ in terms of \emph{generalized} derivatives of the $\mu_i$. However, these may be written in terms of \emph{ordinary} derivatives with respect to a local coordinate (and vice versa).
 
(ii) Although, in order to prove local surjectivity we have had to use integration,
\emph{the formulae we obtain for $A = h(\mu)$ are algebraic involving no integration, are globally defined on $\M(M)^p$  and are independent of local coordinates}.   
Further, there is a stratification of $\M(M)^p$ with top stratum $\M(M)^p_{\wt{ND}}$, with other strata determined by the list of degenerate steps in the algorithm, with different algebraic formulae on each stratum, see the examples in the next section.
 
(iii)  We could equally well give an algorithm with induction step which finds the new last \emph{column}, and then complete the matrix by finding the new first row and top-right element by algebra. That such a process would be equivalent to our method follows from Lemma \ref{lem:algebra}.

(iv) Our algorithm does not modify the parametrization of $\wt{A}$.  In the special case $n=6$ we get nicer parametrizations if we do that, see \S \ref{subsec:n=6}(c) and (e).
\end{remark}
 
\section{Classifications} \label{sec:exs}
 We use our algorithm to find all extended solutions
$\Phi:M \to \Omega_r\U(n)^{\rr}$ of canonical type, and so all harmonic maps $\varphi:M \to \O(n)$ of finite uniton number, 
in two cases: (i) $S^1$-invariant solutions of type $(1,1,\ldots,1)$; (ii) $n \leq 6$.
We shall interpret some of the resulting harmonic maps using terminology to be found in older papers, e.g. \cite{bahy-wood-G2}.
In \S \ref{subsec:totally-isotropic}, we will discuss how our constructions relate to totally isotropic holomorphic maps.    Recall from Remark \ref{rem:type111}(ii) and \cite[\S 6.3]{unitons} that
(i) if $n$ is odd, then the uniton number $r$ of $\varphi$ is even and $r \leq n-1$; 
(ii) if $n$ is even, then $r \leq n-2$.    

\subsection{$S^1$-invariant solutions of type $(1,1,\ldots,1)$}
\label{subsec:11...1}

Let $\xi_0$ denote the canonical element of type
$(1,1,\ldots,1)$, i.e., $\xi_0 = \ii\diag(n-1,n-2,\ldots,1,0)$; by Remark \ref{rem:type111}(ii), $n$ is odd. 
In this case, the algorithm of the last section becomes very simple and we can give a clearer statement.
When $n=1$, the only solution is $A=I$. Otherwise, the extended solution equation \eqref{GrM1} reads
\be{GrM1k-1}
a_{ik}' = \rho_{k-1,k}'\,a_{i,k-1} 
\quad \text{where} \quad \rho_{k-1,k} = a_{k-1,k}
\quad (i=1,\ldots,n, \ k=i+1,\ldots, n-1).
\ee

Note that the $\rho_{k-1,k} = a_{k-1,k}$ are the entries
of $A$ `in the $\g_1$-position', i.e., on the superdiagonal; we shall say that $A$ and the corresponding extended solution $\Phi=[A\ga_{\xi_0}]$ are
\emph{non-degenerate} if the superdiagonal elements $a_{k-1,k}$ of $A$ are non-constant, equivalently
their derivatives are not identically zero.  The weaker condition of non-degeneracy of $\wt{A}$ is also important; the development below shows that it is equivalent to our algorithm being non-degenerate.  
In either case,
we get the following more precise version of Theorem \ref{th:gen}.

\begin{theorem}\label{th:S1-11}
Let $M$ be a Riemann surface, $n =2m+1$ \ $(m \geq 0)$.
Given an $m$-tuple of meromorphic functions $(\mu_1,\ldots,\mu_m)$ on $M$,  
calculate generalized derivatives $\mu_i^{(j)}$ and functions $\rho_1,\ldots,\rho_{2m}$
inductively for $i =1,2,\ldots, m$ as follows$:$ 
\be{mu-derivs}
\mu_i^{(j)} = (\mu_i^{(j-1)})'/\rho_{m+i-j}' \quad (j=0,1,\ldots 2i-2);
\quad \quad \rho_{m-i+1} = \mu_i^{(2i-2)}, \ \rho_{m+i} = -\mu_i^{(2i-2)}.
\ee
Let $\M(M)^m_{\ND}$ be the space of $m$-tuples of meromorphic functions $(\mu_1,\ldots,\mu_m)$
satisfying the \emph{non-degeneracy condition:}
\be{mu-non-deg}
\mu_i^{(2i-2)} \quad \text{is non-constant for all} \quad i=1,\ldots, m.
\ee
Then there is a bijective map $h_0=h_0(\xi_0):\M(M)^m_{\ND} \to (\Sol^{\rr}_{\xi_0})_0^{\ND}$   
to the space of non-degenerate $S^1$-invariant extended solutions\/ $\Phi:M \to \Omega_{n-1}\U(n)^{\rr}$ of type $(1,1,\ldots,1)$ given
by $\Phi = [A \ga_{\xi_0}]$ for $A:M \to (\A_{\xi_0}^{\rr})_0$
where $A = A(\mu) =(a_{ij})$ is the unique unitriangular matrix with
\be{a-ij}
a_{ij} = \mu_{m+1-i}^{(2m+1-i-j)} \quad \text{for $i <j$ and $i+j \leq 2m+1$}.
\ee

The inverse is given by $\mu_i = a_{m-i+1\!,\,m+i}$ \ $(i=1,\ldots, m)$.

Slightly more generally, these formulae define a one-to-one correspondence between the subset $\M(M)^m_{\wt{ND}}  = \{\mu \in \M(M)^m:  \text{ \eqref{mu-non-deg} holds for } i=1,\ldots, m-1\}$ 
and the set
$\{A \in (\Sol^{\rr}_{\xi_0})_0 : \wt{A} \text{ is non-degenerate}\}$. 

The extended solution $\Phi= [A \ga_{\xi_0}]$ is given explicitly by \eqref{fact} where
$\al_i$ is the span of the last $i$ columns; the corresponding harmonic map $\varphi=\Phi_{-1}$ is given by \eqref{phi-S1-invt}.
\end{theorem}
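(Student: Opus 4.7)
The plan is to derive Theorem \ref{th:S1-11} from Theorem \ref{th:gen} by specialization to type $(1,1,\ldots,1)$ and $S^1$-invariance: in that case $A$ is independent of $\la$ (Remark \ref{rem:A}(v)) and the extended solution equation \eqref{GrM1} reduces to the scalar recursion \eqref{GrM1k-1}. I proceed by induction on $m$, the base $m=0$ being trivial ($A=I$).

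For the inductive step, assume the theorem for $m-1$, so that $(\mu_1,\ldots,\mu_{m-1})$ satisfying non-degeneracy \eqref{mu-non-deg} up to index $m-1$ yields a matrix $\wt A$ of size $n-2=2m-1$ with entries $\wt a_{IJ}=\mu_{m-I}^{(2m-1-I-J)}$. I then add a border as in Section \ref{subsec:add-border}, identifying the new meromorphic parameter $\mu_m$ with the top-right non-trivial entry $a_{1,2m}$ of $A$. Reading \eqref{GrM1k-1} in reverse as $a_{1,k-1}=a_{1,k}'/a_{k-1,k}'$ forces
\[
a_{1,k}=\mu_m^{(2m-k)}\qquad (k=2,\ldots,2m),
\]
where the generalized derivatives \eqref{mu-derivs} are well-defined because the denominators $a_{k-1,k}'$ (for $2\le k-1\le 2m-1$) are derivatives of superdiagonal entries \emph{inside} $\wt A$, non-constant by the inductive non-degeneracy. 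I then complete the new last column and top-right element by imposing the complex-orthogonality relations $(c_j,c_n)=0$ and $(c_n,c_n)=0$ as in Section \ref{subsec:add-border}. Lemma \ref{lem:algebra}(ii)(b) then yields that the whole matrix $A$ satisfies \eqref{GrM}, so $A\in(\Sol_{\xi_0}^{\rr})_0$.

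It remains to verify that the completed matrix matches the closed formula \eqref{a-ij} uniformly over all indices with $i+j\le 2m+1$, and that the bijection works. A short downward induction on $i$, based on expanding $(c_{\bar j},c_j)=0$ in the spirit of the proof of Lemma \ref{lem:top-right-deg}, shows that each newly determined entry above the second diagonal takes the form $\mu_{m+1-i}^{(2m+1-i-j)}$, and in particular that the reflected superdiagonal satisfies $a_{m+i,m+i+1}=-\mu_i^{(2i-2)}$ (so $\rho_{m+i}=-\rho_{m+1-i}$). The inverse map $A\mapsto(a_{m-i+1,m+i})_{i=1}^{m}$ is then immediate from \eqref{a-ij}, which evaluates at those positions to $\mu_i^{(0)}=\mu_i$. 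Non-degeneracy matches on both sides because the superdiagonal of $A$ is precisely $\{\rho_1,\ldots,\rho_{2m}\}=\{\pm\mu_i^{(2i-2)}:i=1,\ldots,m\}$. The "slightly more generally" statement follows from the same argument, since only the non-degeneracy of $\wt A$ is used in the inductive construction of $A$ from $\wt A$.

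The main obstacle is this last verification: one has to confirm that the purely algebraic completion enforced by $A\in\O(n,\cc)$ reproduces exactly the reflected $\rho$-pattern prescribed in \eqref{mu-derivs}. This is what distinguishes the orthogonal case from the general unitary setting of Theorem \ref{th:gen} and permits the single new parameter $\mu_m$ to propagate consistently through the entire border, giving the remarkably clean global formula \eqref{a-ij}.
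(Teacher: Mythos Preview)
Your proposal is correct and follows essentially the same route as the paper: induction on $m$ via adding a border, using the scalar recursion \eqref{GrM1k-1} to determine the new first row, then completing by algebra and invoking Lemma~\ref{lem:algebra}. The paper phrases the inductive step as running the general algorithm of Proposition~\ref{prop:alg} (starting from an initial parameter $\nu_0=a_{12}$ and successively replacing it until the final parameter is $\nu=a_{1,n-1}=\mu_m$), whereas you go straight to the endpoint by setting $\mu_m=a_{1,2m}$ and reading \eqref{GrM1k-1} backward; the two are equivalent and produce the same formula $a_{1,k}=\mu_m^{(2m-k)}$.

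One small simplification: your ``short downward induction on $i$'' via $(c_{\bar j},c_j)=0$ to verify \eqref{a-ij} is unnecessary for the entries with $i\ge 2$ and $i+j\le 2m+1$, since those lie entirely inside $\wt A$ and \eqref{a-ij} for them is exactly the inductive hypothesis after the index shift $I=i-1$, $J=j-1$. The orthogonality is only needed, as in the paper, to fill in the last column and to obtain $\rho_{m+i}=-\rho_{m+1-i}$ on the reflected superdiagonal (which, as you note, follows from $A^{\TT}=A^{-1}$ for unitriangular $A$).
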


\begin{proof}
This is trivially true for $m=0$, i.e.\ $n=1$, where $A$ is the $1 \times 1$ identity matrix, and there are no parameters.

Assume that it is true for $m$ replaced by $m-1$ for some $m > 0$, thus all solutions $\wt A = (a_{ij})_{i,j=2,\ldots, n-1}:M \to \A_{\wt\xi_0}^{\rr}$ to the extended solution equation
\eqref{GrM1k-1} are parametrized by an $(m-1)$-tuple
$(\mu_1,\ldots,\mu_{m-1})$ in the fashion described by the theorem.  
Following our algorithm, we add a border to give a square matrix $A$ of size $n$.  As usual, it suffices to find the new first row $(a_{11},\ldots, a_{1,n-1})$ by solving \eqref{GrM1k-1} for $i=1$. 
Of course,  $a_{11} = 1$, and the next entry $a_{12}$ satisfies no equation so we initially parametrize it by $\nu_0 = a_{12}$.
If $n=3$, there are no equations to satisfy and we complete the matrix by algebra, i.e.\ by using \eqref{cx-orthog}, see Example \ref{ex:357} below.

Otherwise, the first equation to satisfy in \eqref{GrM1k-1} is $a_{13}' = \rho_{23}'a_{12} = \rho_{23}'\nu_0$.  To integrate this, we replace $\nu_0$ by a new parameter $\nu_1 = a_{13}$ and set $\nu_0 = (\nu_1)^{(1)} :=(\nu_1)'/\rho_{23}'$. 
Substituting for $\nu_0$ in the expression $a_{12} = \nu_0$ gives $a_{12} = (\nu_1)^{(1)}$.
Inductively, to solve the $K$th equation $a_{1,K+2}' = \rho_{K+1,K+2}'a_{1,K+1}$, replace $\nu_{K\!\!-1}$ by a
new parameter  $\nu_K = a_{1,K+2}$ and set
$\nu_{K\!\!-1} = (\nu_K)^{(1)} := (\nu_K)'/\rho_{K+1,K+2}'$,
giving $(a_{12},a_{13}, \ldots, a_{1,K+2}) = 
\bigl((\nu_K)^{(K)},(\nu_K)^{(K\!\!-1)}, \ldots, \nu_K \bigr)$.

We end up with a final parameter $\nu = \nu_{n-3}$ such that the new first row is
$$
(a_{12},a_{13}, \ldots, a_{1,n-2},a_{1,n-1}) = 
(\nu^{(n-3)},\nu^{(n-4)},\ldots, \nu^{(1)}, \nu)
$$ 
where the generalized derivatives are given inductively
by $\nu^{(j)} = (\nu^{(j-1)})'/\rho_{n-2-j,n-1\!-j}'$ \ 
$(j=1,2, \ldots, n-3)$.
This agrees with \eqref{mu-derivs} as $\rho_i = a_{i,i+1} = \rho_{i,i+1}$ by \eqref{a-ij} for $i \leq m$ (and for $i >m$ by algebra).
The new last column and top-right entry $a_{1n}$ can now be found by algebra, i.e., by using $(c_i,c_n) = 0$ for $i=2,\ldots,n$.  Appending $\mu_m = \nu$ to the existing parameters $\mu_1,\ldots, \mu_{m-1}$ gives the desired parametrization $h_0$.
\end{proof} 

\begin{example} \label{ex:357}
For $n=7$, the theorem gives the following parametrization
by triples $(\mu_1,\mu_2,\mu_3)$ of meromorphic functions in
$\M(M)^3_{\wt{ND}} = \{(\mu_1,\mu_2,\mu_3) \in \M(M)^3: \mu_i^{(2i-2)} \text{ non-constant for } i=1,2 \}$ 
of all $A:M \to \A^{\rr}_{\xi_0}$ 
satisfying the extended solution equation with $\wt{A}$ non-degenerate.
This restricts to a parametrization by
$\M(M)^3_{\ND} = \{(\mu_1,\mu_2,\mu_3) \in \M(M)^3: \mu_i^{(2i-2)} \text{ non-constant for } i=1,2,3 \}$ of all non-degenerate $S^1$-invariant extended solutions of the maximum possible uniton number $6$.
The remaining entries $a_{ij}$ can be calculated by algebra, as below.

$$
A = \begin{pmatrix}
1 & \mu_3^{(4)} & \mu_3^{(3)} & \mu_3^{(2)} & \mu_3^{(1)} & \mu_3 & a_{17} \\
0 & 1 & \mu_2^{(2)} & \mu_2^{(1)} & \mu_2 & a_{26} & a_{27} \\
0 & 0 & 1         & \mu_1  & a_{35}    &a_{36} & a_{37}\\
0 & 0 & 0         & 1       & -\mu_1  & a_{46} & a_{47} \\
0 & 0 & 0         & 0       & 1       &  -\mu_2^{(2)} & a_{57} \\
0 & 0 & 0         & 0       & 0          & 1 & -\mu_3^{(4)} \\
0 & 0 & 0         & 0       & 0          & 0  & 1
\end{pmatrix}.
$$

To see this, we follow the induction starting with the middle $1 \times 1$ matrix which must be the identity matrix.  The middle $3 \times 3$ matrix is parametrized by
$\mu_1 (=a_{34})$, and its last column and top-right entry $a_{45}, a_{35}$ can be calculated from $(c_5, c_i)=0$ for $i=4,5$, thus $a_{45}=-\mu_1$ and
$a_{35} = -\half\mu_1^{\;2}$, cf. \S \ref{subsec:n=3}.
Then the middle $5 \times 5$ matrix, is parametrized by
$(\mu_1,\mu_2)$ and its last column and top-right entry $a_{56},\ldots, a_{26}$ can be calculated from $(c_6, c_i)=0$ for $i=3,4,5,6$, in particular $a_{56} = -\mu_2^{(2)}$, for the rest, see
 \S \ref{subsec:n=5}(c).  Finally, the $7 \times 7$ matrix $A$ is parametrized by $(\mu_1,\mu_2,\mu_3)$ and its last column and top-right entry $a_{67},\ldots, a_{17}$ can be calculated from $(c_7, c_i)=0$ for $i=2\ldots,7$;  in particular $a_{67} = -\mu_3^{(4)}$; the other entries $a_{i7}$ are polynomial in the $\mu_i$ and their derivatives: we leave the reader to work these out.
 
In degenerate cases, different formulae are obtained.  For example, if $\mu_1$ is constant, then by premultiplying by a suitable matrix $E$ as in the algorithm, we can make it $0$ and we obtain the middle $5 \times 5$ matrix in the right-hand matrix below.
Then, if $\mu_2$ is constant, again we can make it zero and we obtain the left-hand $7 \times 7$ matrix; if $\mu_2$ is not constant, we obtain the right-hand matrix.
$$
A = \begin{pmatrix}
1 & \mu_3 & 0 & 0 & 0 & 0 & 0 \\
0 &     1 & 0 & 0 & 0 & 0 & 0 \\
0 &     0 & 1 & 0 & 0 & 0 & 0 \\
0 &     0 & 0 & 1 & 0 & 0 & 0 \\
0 &     0 & 0 & 0 & 1 & 0 & 0 \\
0 &     0 & 0 & 0 & 0 & 1 & -\mu_3 \\
0 &     0 & 0 & 0 & 0 & 0 & 1
\end{pmatrix},
\qquad  
A = \begin{pmatrix}
1 & \mu_3^{(1)} & \mu_3 & 0 & 0  & 0 & 0 \\
0 & 1 & \mu_2 & 0 & 0 &  0 & 0 \\
0 & 0 & 1         & 0  &  0 & 0 &  0 \\
0 & 0 & 0         & 1       & 0  & 0 & 0 \\
0 & 0 & 0         & 0       & 1       &  -\mu_2 & \mu_2\mu_3^{(1)}-\mu_3 \\
0 & 0 & 0         & 0       & 0          & 1 & -\mu_3^{(1)} \\
0 & 0 & 0         & 0       & 0          & 0  & 1
\end{pmatrix}.
$$
Here $\mu_2$ and $\mu_3$ are arbitrary meromorphic functions and, in the right-hand matrix, $\mu_3^{(1)} := \mu_3'/\mu_2'$.
\end{example}

\subsection{Totally isotropic maps and extended solutions}
\label{subsec:totally-isotropic}

We now see how the extended solutions constructed in the last section relate to other interesting maps.
Recall (\cite{wolfson}, see also \cite[Example 4.7]{unitons}) that a harmonic map $f:M \to G_*(\C^n)$
 generates a \emph{harmonic sequence} $G^{(i)}(f)$ $(i \in \Z)$ of
\emph{Gauss bundles} or \emph{transforms}, all harmonic maps. 
By the \emph{(complex) isotropy order} of a \emph{harmonic} map $f:M \to \CP^{n-1}$, we mean the maximum $r$ such that $f$ is perpendicular to $G^{(i)}(f)$ for $i=1,\ldots,r$,
equivalently \cite[Lemma 3.1]{burstall-wood}, the maximum $r$ such that $G^{(i)}(f)$ is perpendicular to $G^{(j)}(f)$ for all $i, j \in \Z$ with $0 < |i - j| \leq r$. 

On the other hand, by the \emph{real isotropy order} of a full \emph{holomorphic} map
$f=[F]:M \to \CP^{n-1}$ we mean the maximum integer $t \geq -1$ such that
\be{isotropy}
(F^{(i)}, F^{(j)}) = 0  \quad \text{for all} \quad i,j \geq 0 \quad
\text{with} \quad i+j \leq t.
\ee
Here $F:U \to \C^n$ denotes a local holomorphic representative of\/ $f$ and $F^{(i)}$ denotes the $i$th derivative with respect to a local complex coordinate: 
the definition is independent of choice of\/ $F$ and of local coordinate.  Differentiation shows that, if
$(F^{(s)}, F^{(s)}) = 0$ for some $s$, then also
$(F^{(s+1)}, F^{(s)}) = 0$.  It follows that $t$ is odd, i.e.\ $t=2s+1$ for some $s \geq -1$; note that
$(F,F) = 0 \iff s \geq 0$.   The largest possible value of\/ $s$ is $[(n-3)/2]$:
in that case fullness implies that $n$ is odd and $t=n-2$, and we say that $f$ is \emph{totally isotropic} \cite{eells-wood}.
Note that the real isotropy order $t$ is not the same as the complex isotropy order: indeed, the latter is infinite for a holomorphic map. However, if $f$ is a holomorphic map of real isotropy order $t \geq 0$, the map $f \oplus \ov{f}:M \to G_2(\R^n)$ is a harmonic map called a \emph{real mixed pair};
by \cite[Lemma 2.14]{bahy-wood-G2} this has complex isotropy order $t$.

In \cite{calabi-quelques,calabi-JDG}, E.~Calabi showed how that all harmonic maps into $\RP^{2m}$ or $S^{2m}$ can be obtained from totally isotropic holomorphic maps, giving the
 bijections between (ii), (iii) and (iv) below; in particular, the bijection from (ii) to (iii) is given by $f \mapsto G^{(m)}(f)$.  We now explain how these relate to polynomial extended solutions of harmonic maps into $\O(2m+1)$ of type $(1,1,\ldots,1)$, and so of the maximum possible uniton number $2m$. The corresponding canonical element is $\xi_0 = \ii\diag(2m,2m-1,\ldots, 1,0)$.

\begin{theorem} \label{th:11-appl}
Let $M$ be a Riemann surface and $n =2m+1 \geq 3$ an odd integer. The following sets are in one-to-one correspondence$:$
\begin{itemize}
\item[(i)] non-degenerate $S^1$-invariant extended solutions
$\Phi:M \to \Omega_{n-1}\U(n)^{\rr}$ of type $(1,1,\ldots,1);$

\item[(i)$'$] non-degenerate solutions $A:M \to (\A_{\xi_0}^{\rr})_0$ to the extended solution equation \eqref{GrM}$;$

\item[(ii)] full totally isotropic holomorphic maps $f:M \to \CP^{n-1};$

\item[(iii)]  full harmonic maps $\varphi:M \to \RP^{n-1};$

\item[(iv)] antipodal pairs $\pm\wt\varphi:M \to S^{n-1}$ of full
harmonic maps.
\end{itemize}
In particular, we obtain an explicit algebraic parametrization of sets (i)--(iv) by 
$m$-tuples $(\mu_1,\ldots,\mu_m)$ of meromorphic functions satisfying the non-degeneracy condition \eqref{mu-non-deg}.
\end{theorem}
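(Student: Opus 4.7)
The plan is to prove the claimed bijections by establishing (i) $\Leftrightarrow$ (i)$'$, (i) $\Leftrightarrow$ (ii), (ii) $\Leftrightarrow$ (iii), and (iii) $\Leftrightarrow$ (iv). Three of these should be essentially on the shelf. For (i) $\Leftrightarrow$ (i)$'$ I would invoke Corollary \ref{cor:one-one} applied to the canonical element $\xi_0$, together with the observation that non-degeneracy of the superdiagonal entries of $A$ is exactly what makes the algorithm of Proposition \ref{prop:alg} produce no degenerate steps (this is visible from \eqref{GrM1k-1}). The equivalence (iii) $\Leftrightarrow$ (iv) should follow from standard covering theory for the double cover $S^{n-1} \to \RP^{n-1}$, with full-ness preserved and the lift defined up to the antipodal involution. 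The equivalence (ii) $\Leftrightarrow$ (iii) is Calabi's classical theorem \cite{calabi-quelques, calabi-JDG}: the map $f \mapsto G^{(m)}(f)$ sends full totally isotropic holomorphic maps into $\CP^{n-1}$ bijectively onto full harmonic maps into $\RP^{n-1}$.

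The main step is (i) $\Leftrightarrow$ (ii), for which I would pass through the Grassmannian model and use the example at the end of \S \ref{subsec:cx-Grass}. Given a full totally isotropic holomorphic $f = [F]:M \to \CP^{n-1}$, I would set $\al_i = f_{(i-1)}$ for $i = 1, \ldots, n-1$; fullness gives $\rank \al_i = i$, and the standard derivative property of associated curves makes $(\al_i)$ superhorizontal. Total isotropy reads $(F^{(i)}, F^{(j)}) = 0$ for $i + j \leq n - 2$, which translates to $\al_i \subset \al_{n-i}^{\circ}$; a dimension count (both sides have rank $n-i$) upgrades this inclusion to the reality condition $\al_i^{\circ} = \al_{n-i}$ of \S \ref{subsec:On-1}. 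Applying \eqref{Grass-S1} then produces an $S^1$-invariant Grassmannian model $W$, and its reality together with $S^1$-invariance places the corresponding extended solution $\Phi$ in $\Omega_{n-1}\U(n)^{\rr}$ of type $(1,1,\ldots,1)$; fullness of $f$ should match non-degeneracy of $\Phi$. Conversely, given $\Phi$ in (i), Remark \ref{rem:A}(iv) presents it in the form \eqref{fact} for a nested chain $\al_1 \subsetneq \al_2 \subsetneq \cdots \subsetneq \al_{n-1}$ of holomorphic subbundles with $\rank \al_i = i$, superhorizontal and satisfying $\al_i^{\circ} = \al_{n-i}$ by reality. Superhorizontality with these strict rank jumps forces $\al_i$ to be the $(i-1)$th associated curve of $f := \al_1$, and the relation $\al_i^{\circ} = \al_{n-i}$ unwinds to total isotropy of $f$.

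Composing the four bijections with the explicit parametrization of (i)$'$ furnished by Theorem \ref{th:S1-11} then yields the final parametrization of all five sets by $\M(M)^m_{\ND}$. The main obstacle will be the careful verification that total isotropy of $f$ is \emph{equivalent} to (not merely sufficient for) the reality $\al_i^{\circ} = \al_{n-i}$ of its osculating flag: the forward direction requires combining the vanishing pairings $(F^{(i)}, F^{(j)}) = 0$ with the rank identity $\rank \al_i + \rank \al_{n-i} = n$ to upgrade inclusion to equality, while the converse runs the same identities in reverse. Some care is also needed with the distinction between the real and complex isotropy orders noted at \eqref{isotropy}, so that the pairing conditions extracted from reality of the flag are exactly those defining total isotropy, and so that the matching between $\mu_i^{(2i-2)}$ non-constant (Theorem \ref{th:S1-11}) and fullness of the underlying $f$ can be made precise.
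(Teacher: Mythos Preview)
Your proposal is correct and takes essentially the same approach as the paper: both identify $f$ with $\alpha_1$ (the span of the last column of $A$), match the osculating flag $f_{(i-1)}$ with the chain $\alpha_i$, and read off total isotropy of $f$ from the reality condition $\alpha_i^{\circ} = \alpha_{n-i}$ (equivalently, the complex-orthogonality \eqref{cx-orthog} of the columns of $A$). The paper is marginally more concrete in the inverse direction $(\mathrm{ii})\to(\mathrm{i})'$, rebuilding $A$ from $f$ column-by-column via $c_j = c_{j+1}'/a_{j,j+1}'$ (fullness ensuring non-vanishing denominators) rather than passing through the Grassmannian model.
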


\begin{proof}
By Proposition \ref{prop:BuGu-On}, the map $\Phi = [A\ga_{\xi_0}]$ defines a bijection between
(i) and (i)$'$. 
Given $A$ in (i)$'$, its last column gives a full totally isotropic holomorphic map $f$; indeed, each associated curve $f_{(i)}$ is the span of the last $i+1$ columns of\/ $A$, so that $f$ is full and \eqref{isotropy} holds for $t = n-2$; thus $f$ is in set (ii).

Conversely, given $f$ in (ii), we can write $f = [F]$ where $F = (F_0,F_1,\ldots, F_{n-1})$ is meromorphic with $F_0 = 1$; define the last column of\/ $A$ by $c_n = F^{\TT}$, i.e., 
$a_{in} = F_{n-i}$ \ $(i=1,\ldots, n)$.  Then, for
$j=n-1,n-2, \ldots, 1$, define the $j$th column of\/ $A$ by
$c_j = c_{j+1}'/a_{j,j+1}'$; by fullness, no denominator is identically zero; this gives $A$ in (i)$'$.

The last statement follows by parametrizing set (i) as in Theorem \ref{th:S1-11}. 
\end{proof}

\subsection{Uniton number at most 2} \label{subsec:r=1,2}
In this case, we find all harmonic maps completely explicitly, as follows. In the sequel, all uniton factorizations will be the alternating factorization, see \S \ref{subsec:uniton-fact}.

\begin{proposition} \label{prop:r=0,1}
{\rm (i)} A harmonic map $\varphi:M \to \O(n)$ has uniton number $0$ if and only if it is constant; in particular, any harmonic map with $n \leq 2$ is of this type.
It has an associated extended solution $\Phi=[A\ga_{\xi}] = I$ of type $(n)$ given by $\xi = 0$ and $A = \ga_{\xi} = I$.  We shall refer to this as the \emph{trivial solution}.

{\rm (ii)} A harmonic map $\varphi:M \to \O(n)$ has uniton number $1$ if and only if $n=2m$
for some $m$, and up to left-multiplication by a constant matrix in $\O(2m)$, it is a holomorphic map into $\O(2m)/\U(m)$.   More precisely, $\varphi = \ii(\pi_V - \pi_V^{\perp}) = \ii(\pi_V - \pi_{\ov{V}})$ where $V$ is a maximally isotropic holomorphic subbundle of\/ $\CC^n;$ $\varphi$ has
 associated extended solution $\Phi = \pi_V + \la \pi_V^{\perp}$ with $\varphi = \ii\Phi_{-1}$.
\end{proposition}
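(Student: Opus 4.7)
For part (i), I would argue that a degree-$0$ polynomial extended solution $\Phi$ is a constant loop in $\Omega\U(n)$, necessarily equal to $I$ (since based at the identity), so any associated harmonic map $\varphi = g\Phi_{-1} = g$ is constant. Conversely, a constant map is harmonic with trivial extended solution $I$; in Proposition \ref{prop:BuGu-On} this corresponds uniquely to $\xi = 0$ (type $(n)$) and $A = \ga_{\xi} = I$. For $n \leq 2$, the constraints $\xi_1 = r$, $\xi_n = 0$, $\xi_{\ibar} = r - \xi_i$ defining a canonical element of $\Omega_r\U(n)^{\rr}$ (Definition \ref{def:can-el-R}), together with the rider (R) when $r$ is odd, force $r = 0$, so every harmonic map of finite uniton number in that dimension range is constant.

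For part (ii), I would take an associated degree-one extended solution of $\varphi$ and, via Proposition \ref{prop:BuGu-On}, replace it by an equivalent $\Phi = [A\ga_{\xi}]$ with $\xi$ a canonical element of $\Omega_1\U(n)^{\rr}$. The defining constraints on $\xi$ (integer entries in $\{0,1\}$, $\xi_1 = 1$, $\xi_n = 0$, $\xi_{\ibar} = 1 - \xi_i$) force $n$ to be even, $n = 2m$, and $\xi = \ii\diag(1,\ldots,1,0,\ldots,0)$ of type $(m,m)$. Then $A \in \A_{\xi}^{\rr}$ is block unitriangular with identity diagonal blocks and a single $m \times m$ off-diagonal block $B$ that is constant in $\la$ (since the sole relevant superdiagonal has $\xi_i - \xi_j - 1 = 0$); the extended solution equation \eqref{GrM} is automatically satisfied, and the only nontrivial condition on $A$ is the complex-orthogonality \eqref{cx-orthog}.

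I would then identify $V := \spa\{c_{m+1},\ldots,c_n\}$ as the desired maximally isotropic holomorphic subbundle. A direct computation in the null basis, using $a_{i,m+j} = b_{ij}$ for $i \leq m$ and $a_{i,m+j} = \delta_{i,m+j}$ for $i > m$, yields
\[
(c_{m+j}, c_{m+k}) = b_{m+1-k,\,j} + b_{m+1-j,\,k},
\]
so the relations $(c_{m+j}, c_{m+k}) = 0$ are equivalent to $V$ being isotropic for the bilinear form; since $\dim V = m$, it is maximally isotropic. Holomorphicity of $V$ follows from meromorphicity of the entries of $A$ and the smooth extension discussed at the end of \S\ref{subsec:cx-extd-solns}. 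The Grassmannian model computes directly to $W = \Psi\H_+ = V + \la\HH_+$.

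Finally, I would read off $\Phi = \pi_V + \la\pi_V^{\perp}$ as the loop (with Hermitian projections) satisfying $\Phi(1) = I$ and $\Phi\H_+ = W$, hence equal to the Iwasawa projection of $\Psi$. The key algebraic identity is that, for $V$ maximally isotropic with $V \oplus \ov V = \C^n$, the Hermitian complement $V^{\perp}$ coincides with the conjugate $\ov V$; this follows from $\langle v,w\rangle = (v,\ov w)$ and a dimension count. Consequently $\ov{\Phi} = \la^{-1}\Phi$, so $\Phi \in \Omega_1\U(n)^{\rr}$, and $\varphi = \ii\Phi_{-1} = \ii(\pi_V - \pi_{\ov V})$ is real-valued and squares to $I$, defining a map into $\O(2m)/\U(m) \subset \O(2m)$. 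The converse is direct: any maximally isotropic holomorphic subbundle $V$ of $\CC^{2m}$ defines a holomorphic map into the Hermitian symmetric space $\O(2m)/\U(m)$, which is automatically harmonic, with the stated extended solution; the constant matrix in $\O(2m)$ accounts for the equivalence in Proposition \ref{prop:BuGu-On}. I expect the main technical point to be the careful identification $V^{\perp} = \ov V$, which underpins both the reality of $\Phi$ and the equivalence with maximally isotropic subbundles; once it is in place, everything else amounts to direct verification.
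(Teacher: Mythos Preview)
Your proposal is correct and follows essentially the same route as the paper: reduce via Proposition~\ref{prop:BuGu-On} to a canonical element, observe that $r=1$ forces type $(m,m)$ with $n=2m$, write $A$ in block form with an off-diagonal block $B$ (your isotropy computation $(c_{m+j},c_{m+k})=b_{m+1-k,j}+b_{m+1-j,k}$ is exactly the condition $B^{\TT}=-B$ the paper states), and read off $W=V+\la\HH_+$ and $\Phi=\pi_V+\la\pi_V^{\perp}$. Your treatment is somewhat more detailed than the paper's---in particular the verification $V^{\perp}=\ov V$ via $\langle v,w\rangle=(v,\ov w)$ and the explicit handling of the $n\leq 2$ case through the rider~(R)---but the argument is the same.
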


\begin{proof}  (i) Evident, since it has a polynomial associated extended solution of degree
$0$, which must equal the identity matrix.  When $n \leq 2$ this is the only element of
$\O(n,\C)$.

(ii) By Proposition \ref{prop:BuGu-On}, $\varphi$ has an associated extended solution $\Phi=[A\ga_{\xi}]$
of canonical type with $r=1$.  The type must thus be $(m,m)$ for some $m$, so $n=2m$ and
$\xi = \ii\diag(1,\ldots,1,0,\ldots,0)$, which gives the canonical geodesic $\ga_{\xi} = \diag(\la,\ldots,\la,1,\ldots,1)$.
Now, any solution $A:M \to \O(2m,\C)$ to \eqref{GrM} with $r=1$
 is of the form\\[-4ex]
\begin{equation} \label{A-r1}
A = \begin{pmatrix} I & B \\ 0 & I \end{pmatrix}
\end{equation}
where $B:M \to \gl(m,\C)$ is meromorphic and has $B^{\TT} = -B$.
The resulting Grassmannian model is $W = V + \la\HH_+$ where $V$ is the span of the last $m$ columns of $A$, a maximally isotropic subbundle of $\CC^n$, equivalently a holomorphic map from $M$ to $\O(2m)/\U(m)$ (see \S \ref{subsec:On-1}); this $W$ corresponds to the stated extended solution.
\end{proof}
See \S \ref{subsec:n=4}(a)  and \S \ref{subsec:n=6}(b) for examples of this type.
We next discuss extended solutions of harmonic maps of uniton number $2$. 
Such a harmonic map has an associated polynomial extended solution of canonical type and of degree $2$, so it suffices to discuss those.

\begin{proposition} \label{prop:r=2}
{\rm (i)} Any extended solution $\Phi:M \to \Omega_2\U(n)^{\rr}$ of canonical type has a uniton factorization of the form
\be{r=2-gen}
\Phi = (\pi_X + \la\pi_X^{\perp})(\pi_V + \la\pi_V^{\perp})
\ee 
where $X$ and $V$ are holomorphic subbundles of\/ $\CC^n$ with $X^{\perp}$ and $V$ isotropic and $\pi_V X = V$.

This is $S^1$-invariant if and only if\/ $X$ is the polar\/
$V^{\circ} = \ov{V}^{\perp}$ of\/ $V$, in which case $X$, $V$ and $\ov{V}$ all commute and \eqref{r=2-gen} reads 
\be{r=2-S1-invt}
\Phi = \la(\pi_{\ov{V}} + \la^{-1}\pi_{\ov{V}}^{\perp})(\pi_V + \la\pi_V^{\perp})
	= \la(\pi_V + \la\pi_V^{\perp})(\pi_{\ov{V}} + \la^{-1}\pi_{\ov{V}}^{\perp}).
\ee
The corresponding harmonic map is then $\varphi = \Phi_{-1} = V \oplus \ov{V}:M \to G_{2s}(\C^n)$ (where $s=\rank V$), which is
a (higher dimensional) \emph{real mixed pair} \cite{bahy-wood-G2}, and has (minimal) uniton number $2$ unless $V$ is constant.

{\rm (ii)} All extended solutions $\Phi:M \to \Omega_2\U(n)^{\rr}$ of type $(1,t_1,1)$ are
$S^1$-invariant, and so are of the form \eqref{r=2-S1-invt} with
$\rank V=1$; the corresponding harmonic maps $\varphi = V \oplus \ov{V}$ are real mixed pairs.
\end{proposition}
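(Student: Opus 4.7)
For part (i), I would start by invoking Proposition~\ref{prop:BuGu-On} to reduce to $\Phi=[A\ga_\xi]$ for some canonical element $\xi=\ii\diag(2,\ldots,2,1,\ldots,1,0,\ldots,0)$ of type $(t_0,t_1,t_0)$ with $t_0,t_1\geq 1$. The alternating factorization of \S\ref{subsec:uniton-fact} applied to $r=2$ uses the filtration $W_0=\HH_+\supset W_1=\la^{-1}W\cap\HH_+ + \la\HH_+\supset W_2=W$, producing via \eqref{alpha} two unitons $\al_1,\al_2$ with $\Phi=(\pi_{\al_1}+\la\pi_{\al_1}^{\perp})(\pi_{\al_2}+\la\pi_{\al_2}^{\perp})$; both are holomorphic subbundles of $\CC^n$ by the uniton property. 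Setting $X=\al_1$ and $V=\al_2$, the reality $\Phi^{\T}\Phi=\la^2 I$ (equivalently $\Phi\Phi^{\T}=\la^2 I$) expands into polynomial identities in $\la$. The extremal coefficients (those of $\la^0$ and $\la^4$) yield, upon taking ranges and kernels, the inclusions $V\subset\ov V^{\perp}$ (so $V$ is isotropic) and $X^{\perp}\subset\ov X$ (so $X^{\perp}$ is isotropic), while the middle coefficient encodes the compatibility $\pi_VX=V$.

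For the $S^1$-invariance part of (i), I would argue that by Remark~\ref{rem:A}(v), $\Phi$ is $S^1$-invariant iff $A$ is $\la$-independent, iff the unitons form a nested superhorizontal sequence as in \eqref{nested}. In the real case, the sequence additionally satisfies $\al_i^{\circ}=\al_{r+1-i}$ (see \S\ref{subsec:On-1}), which for $r=2$ gives $X=\al_1=\al_2^{\circ}=V^{\circ}$. Conversely, assuming $X=V^{\circ}$ with $V$ isotropic, the key algebraic input is $\pi_V\pi_{\ov V}=0$ (isotropy of $V$ is equivalent to $V\perp\ov V$ in the Hermitian sense), which makes $\pi_V$ and $\pi_{\ov V}$ commute and gives the equivalence of the two products in \eqref{r=2-S1-invt}. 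Direct expansion using these identities yields $\Phi=\pi_V+\la(I-\pi_V-\pi_{\ov V})+\la^2\pi_{\ov V}$, manifestly $S^1$-invariant; specialising to $\la=-1$ gives $\Phi_{-1}=2\pi_{V\oplus\ov V}-I$, which under the Cartan embedding \eqref{cartan} represents the complex-conjugation-invariant subbundle $V\oplus\ov V$ of rank $2\,\rank V$, i.e.\ the (higher-dimensional) real mixed pair.

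For part (ii), I would exploit the block structure of $A\in\A_\xi^{\rr}$ for $\xi$ of type $(1,t_1,1)$. The blocks $B$ (size $1\times t_1$) and $D$ (size $t_1\times 1$) consist of entries of degree at most $\xi_i-\xi_j-1=0$ by Definition~\ref{def:A-complex}, and the single scalar top-right entry $a_{1n}$, \emph{a priori} of degree at most $\xi_1-\xi_n-1=1$, is in fact of degree at most $r-2\xi_n-2=0$ by Lemma~\ref{lem:top-right-deg} (equivalently, its potential top coefficient lies in $\g_r^{\rr}=0$ by Remark~\ref{rem:type111}(i)). Hence $A$ is $\la$-independent and $\Phi=[A\ga_\xi]$ is $S^1$-invariant by Remark~\ref{rem:A}(v). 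Part (i) then applies with $V=\al_2$ of rank $t_0=1$, and the corresponding $\varphi=V\oplus\ov V$ is a rank-$2$ real bundle, reproducing the classical real mixed pair of \cite{bahy-wood-G2}.

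The step I expect to be the main obstacle is extracting the rank relation $\pi_V X=V$ from $\Phi^{\T}\Phi=\la^2 I$: the isotropy conditions for $V$ and $X^{\perp}$ come fairly cleanly from the extremal $\la$-coefficients, but the middle coefficient entangles all four projections $\pi_V,\pi_X,\pi_{\ov V},\pi_{\ov X}$ and must be carefully manipulated to isolate the surjectivity of $\pi_V|_X$ onto $V$. A secondary pitfall is keeping track of which of $\al_1,\al_2$ (in the alternating order) plays the role of the isotropic $V$ of rank $t_0$ versus the larger $X$ of rank $t_0+t_1$, so that the formulation remains consistent both with Proposition~\ref{prop:r=0,1}(ii) and with the rank count $\rank V=1$ in part (ii).
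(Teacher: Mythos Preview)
Your argument for part~(ii) and for the $S^1$-invariant characterisation in part~(i) matches the paper's: both use Lemma~\ref{lem:top-right-deg} to force $A$ to be $\la$-independent when the type is $(1,t_1,1)$, and both identify $S^1$-invariance with $A$ being $\la$-free via Remark~\ref{rem:A}(v).

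Where you diverge is in establishing the isotropy of $V$ and $X^{\perp}$ and the relation $\pi_V X=V$. The paper does \emph{not} expand $\Phi^{\T}\Phi=\la^2 I$. Instead it writes each column of $A$ as $c_j=c_j^0+\la c_j^1$ (with $c_j^1=0$ for $j\le t_0+t_1$), reads off the Grassmannian model $W=\wt V+\la X+\la^2\HH_+$ with $X=\spa\{c_j^0:t_0<j\le n\}$, and then applies the alternating-factorisation formulae \eqref{alt-fact}, \eqref{alpha}, \eqref{fact} to obtain $V=\spa\{c_j^0+\pi_X^{\perp}c_j^1:t_0+t_1<j\le n\}$. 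The structural claims then come for free from the column orthogonality $(c_i,c_j)=\delta_{i\jbar}$ built into $A\in\A_{\xi}^{\rr}$: for instance $X^{\circ}=\spa\{c_j^0:j>t_0+t_1\}\subset X$, whence $X^{\perp}$ is isotropic; and $V\cap X^{\perp}=0$ because the $X$-component $c_j^0$ of each generator of $V$ is linearly independent, giving $\pi_V X=V$. The $S^1$-invariance criterion becomes simply ``$c_j^1\equiv 0$ for all $j$'', which visibly forces $V=X^{\circ}$.

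Your proposed route via the $\la$-expansion of $\Phi^{\T}\Phi=\la^2 I$ is more fragile than you suggest. The $\la^0$-coefficient is $S_0^{\T}S_0=\pi_{\ov V}\pi_{\ov X}\pi_X\pi_V=0$, which involves all four projections, not two; it does \emph{not} by itself yield $V\perp\ov V$ (that would follow if $V\subset X$, but in the non-$S^1$-invariant case $V\not\subset X$). Likewise the $\la^4$-coefficient $\pi_{\ov V}^{\perp}\pi_{\ov X}^{\perp}\pi_X^{\perp}\pi_V^{\perp}=0$ does not isolate $X^{\perp}\perp\ov{X^{\perp}}$. To make your approach work you would need to combine several coefficient equations together with the rank information $\rank V=t_0$, $\rank X=t_0+t_1$, effectively re-deriving the real alternating factorisation of \cite[\S 6.1]{unitons}. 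So the obstacle you flag at the end is real, but it already bites at the ``easy'' extremal coefficients, not only at the middle one. The paper's column-based argument sidesteps all of this.
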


\begin{proof}
(i) Write $\Phi = [A\ga_{\xi}]$; note that the type must be $(t_0,t_1,t_0)$ for some
$t_0, t_1$ with $2t_0 + t_1 = n$.
For each $j$, write the $j$th column of $A$ as $c_j = c_j^0 + \la c_j^1$, note $c_j^1=0$ for all $j \leq t_0+t_1$.
Set $X = \spa\{c_j^0 : t_0 < j \leq n\}$ and
$\wt{V} = \spa\{c_j = c_j^0+ \la c_j^1 : t_0+t_1 < j \leq n\}$.  Then the Grassmannian model $W = A\ga_{\xi}\H_+$ is
$W = \wt{V} + \la X + \la^2 \H_+$ so, from \eqref{alt-fact}, \eqref{alpha} and \eqref{fact}, the alternating uniton factorization
is given by \eqref{r=2-gen} where
$V = \spa\{c_j^0 + \pi_X^{\perp}c_j^1 : t_0+t_1 < j \leq n\}$.  

This is $S^1$-invariant if and only if $c_j^1 = 0$ for all $t_0+t_1  <j \leq n$, equivalently $X$ is the polar of $V$.
Then $(V,X)=(V,V^{\circ})$ is a \emph{$\pa'$-pair} in the sense of \cite{erdem-wood}.
Thus the Grassmannian model $W = A\ga_{\xi}\H_+$ is $W = V + \la V^{\circ} + \la^2 \HH_+$, giving extended solution \eqref{r=2-S1-invt}.

(ii) By Lemma \ref{lem:top-right-deg}, the maximum degree of any term of $A$ is $0$, giving an $S^1$-invariant extended solution.
\end{proof}

\subsection{All extended solutions for $n$ at most $6$} \label{subsec:n-leq-6}
We will now find all extended solutions of canonical type for $n \leq 6$.  To do this
we find all solutions $A:M \to \A_{\xi}^{\R}$ to \eqref{GrM} by our algorithm; we can then compute the corresponding extended solutions $\Phi=[A\ga_{\xi}]$ using the formulae in
\S \ref{subsec:uniton-fact}, or \S \ref{subsec:cx-Grass} in the $S^1$-invariant case.    By modifying our algorithm, and so the mappings $h$ and $h_0$ in some cases, we obtain the following improvement of Theorem \ref{th:gen}
where `locally surjective' is replaced by `surjective', or even, `bijective'.
 
\begin{theorem} 
Let $M$ be a Riemann surface and let $n \leq 6$.  Let\/ $\xi=\ii\diag(\xi_1,\ldots,\xi_n)$ be a canonical element of\/ $\Omega_r\U(n)^{\rr}$ for some $r \in \N;$
set $p = p(\xi) = \sum_{k=1}^r \dim \g_k^{\rr}(\xi)$ and
$p_1 = p_1(\xi) = \dim \g_1^{\rr}(\xi)$.  There are maps
$h=h_{\xi}:\M(M)^p \to \Sol_{\xi}^{\rr}$ and $h_0 = (h_0)_{\xi}:\M(M)^{p_1} \to (\Sol_{\xi}^{\rr})_0$ such that
\begin{enumerate}
\item[(i)] $h$ and $h_0$ are \emph{surjective} up to replacing $A$ by $EA$ for some constant matrix
$E \in \O(n,\C);$

\item[(ii)] $h$ restricts to a \emph{bijective} map $h:\M(M)^p_{\wt{\ND}} \to (\Sol_{\xi}^{\rr})_{\wt{\ND}}$, $\mu \mapsto A(\mu)$ to an open dense subset of\/ $\Sol_{\xi}^{\rr}$; this map
is algebraic in the sense that each entry of\/ $A(\mu)$ is polynomial
in $\la$ with coefficients rational functions of the $\mu_i$ and their derivatives of order less than or equal to $n-3$.
\end{enumerate}  
\end{theorem}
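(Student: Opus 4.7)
The plan is a case-by-case analysis over the (finite) list of canonical elements of $\Omega_r\U(n)^{\rr}$ for $n \leq 6$. Enumerating by type $(t_0,\ldots,t_r)$: for $n \leq 2$ only the trivial $(n)$ occurs; for $n=3$ we have $(3)$ and $(1,1,1)$; for $n=4$ we have $(4)$, $(2,2)$, $(1,2,1)$; for $n=5$ we have $(5)$, $(1,3,1)$, $(2,1,2)$, $(1,1,1,1,1)$; and for $n=6$ we have $(6)$, $(3,3)$, $(1,4,1)$, $(2,2,2)$, $(1,2,2,1)$, $(1,1,2,1,1)$. For each of these the algorithm of Proposition \ref{prop:alg} can be run explicitly and the resulting $h$, $h_0$ examined directly; the detailed verification is carried out in the subsections \S\ref{subsec:n=3}--\S\ref{subsec:n=6} below.

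For each type the strategy is as follows. Remove the border to obtain a canonical element $\wt\xi$ of smaller dimension; treating the exceptional $(2,2)$ for $n=4$ separately, downward induction supplies $\wt A = h_{\wt\xi}(\wt\mu)$. It then remains only to solve \eqref{GrM1j} for the new top row, which by Lemma \ref{lem:algebra}(ii)(b) is enough; the new last column and top-right entry are recovered by algebra from $\wt A$ and the new top row via the complex-orthogonality relations \eqref{cx-orthog}. Because $n \leq 6$, the new top row has at most four non-trivial entries to determine and only a few integration steps in the algorithm occur; each one is absorbed by \emph{defining} the next parameter $\mu_i$ to be (a generalized derivative of) the appropriate entry of $A$, so the resulting formulae for $A=h(\mu)$ are globally defined on the whole of $\M(M)^p$ rather than merely on a coordinate chart. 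The constants of integration that arise are absorbed into a constant left factor $E \in \O(n,\C)$ as in the proof of Proposition \ref{prop:alg}; this accounts for the caveat in (i).

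For the bijectivity in (ii), the inverse $A \mapsto \mu$ is written down explicitly in each case by reading off the $\mu_i$ as specific entries of $A$, compare formula \eqref{a-ij} of Theorem \ref{th:S1-11} for the $S^1$-invariant model. Non-degeneracy is precisely the hypothesis that the generalized-derivative denominators $\rho_{ij}'$ appearing in the algorithm do not vanish identically on $M$, so the procedure is reversible; this simultaneously shows injectivity and that the image is exactly $(\Sol_{\xi}^{\rr})_{\wt{\ND}}$.

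The principal obstacle is the case $n=4$ of type $(2,2)$, where removing the border produces the forbidden ``type $(1,1)$'' (it violates the rider (R) of Definition \ref{def:can-el-R}) so the inductive hypothesis is not directly available and the case must be handled ab initio; this case also feeds the $n=6$ types $(3,3)$ and $(1,2,2,1)$, whose inner block is of type $(2,2)$. A subsidiary nuisance, flagged in Remark~(iv) following Theorem~\ref{th:gen}, is that the naive algorithm on certain $n=6$ types yields unpleasant formulae, and to obtain clean global expressions one must also re-parametrize the inner matrix $\wt A$; this modified algorithm is carried out in \S\ref{subsec:n=6}(c) and (e). Assembling the verifications from \S\ref{subsec:n=3}--\S\ref{subsec:n=6} then yields both (i) and (ii).
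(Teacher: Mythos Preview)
Your proposal is correct and follows essentially the same approach as the paper: a case-by-case verification over all canonical types for $n\le 6$, carried out explicitly in \S\ref{subsec:n=3}--\S\ref{subsec:n=6}, with the border-adding induction, the special handling of type $(2,2)$, and the modifications for $n=6$ all noted. One small correction: in \S\ref{subsec:n=6}(c) the re-parametrization is of the \emph{new} border parameters (to repair injectivity and global surjectivity, not merely aesthetics), whereas it is only in (e) that a parameter of the inner matrix $\wt A$ itself is replaced.
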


We shall show this for each dimension in turn, concentrating on non-degenerate cases; the reader can easily calculate degenerate cases as in Example \ref{ex:357}.  Dimensions $n=1$ and $2$ are trivial, see Proposition
\ref{prop:r=0,1}, so we start with $n=3$.

\subsection{Dimension n=3} \label{subsec:n=3}
All solutions are obtained from the unique $n=1$ case $\wt A=(1)$ by adding a border.  This gives 
one non-trivial type, $(1,1,1)$, i.e., $\xi = \ii\diag(2,1,0)$ giving the closed geodesic
$\ga_{\xi} = \diag(\la^2, \la, 1)$. Any solution $A:M \to \O(3,\C)$ to \eqref{GrM} is
obtained from the identity matrix in $\O(1,\C)$ by choosing an arbitrary meromorphic function 
$a_{12} = -g$, say; in fact, this is the lowest-dimensional case of Theorem
\ref{th:S1-11} as in Example \ref{ex:357} (with $\mu_1 = -g$).  Filling in the last column by algebra, i.e., using $(c_i,c_3)=0$ for $i=2,3$
(see \S \ref{subsec:add-border}) gives a complex extended solution $\Psi = A \ga_{\xi}$ where
\be{n=3}
 A = \begin{pmatrix} 1 & -g & -\half g^2 \\
					0 & 1 & g \\
					0 & 0 & 1 \end{pmatrix}.
\ee
Let $\Phi = [A \ga_{\xi}]$ and $\varphi = \Phi_{-1}$ be the corresponding extended solution and harmonic map.
 As in Proposition \ref{prop:r=2}(ii), 
$\varphi: M \to G_2(\R^3)$ is the real mixed pair
given by $\varphi^{\perp} = V \oplus \ov{V}$.
More explicitly, let $Q_{n-2}$ denote the complex quadric
$\{[z_0,\ldots,z_{n-1}] \in \CP^{n-1}: \sum_{i=0}^{n-1}z_i z_{n-i-1} = 0\}$; then,  with $\iota$ denoting the Cartan embedding, $\varphi$ is the composition:
 $$
\begin{gathered}
\xymatrixrowsep{-0.3pc}\xymatrixcolsep{1pc}\xymatrix{
M \ar[r]^g & \Cinfty \ar[r]^{\equiv} & \CP^1 \ar[r]^{\equiv}& Q_1
\ar[r]^{\equiv} & S^2 \ar[r]^{2:1} & G_2(\R^3) \ar[r]^{\iota} & \O(3)  \\
z \ar[r]& g = g(z)\ar[r] & [1,g] \ar[r]& h=[1,g,-\half g^2]\ar[r] & (h\oplus\ov{h})^{\perp}
	\ar[r] & h \oplus \ov{h}\ar[r] & \pi_{h\oplus\ov{h}} - \pi_{h\oplus\ov{h}}^{\perp} 
}
\end{gathered}
$$
Here and in the rest of the paper, $\equiv$ denotes a standard identification.
The real line $(h\oplus\ov{h})^{\perp}$ is given a canonical orientation so that it gives a point of $S^2$; the composition $\Cinfty \to \CP^1 \to Q_1 \to S^2$ of the maps above is stereographic projection.

Note that, if $g$ is constant, then $\varphi$ is constant and has (minimal) uniton number $0$, otherwise it has uniton number $2$.

\subsection{Dimension n=4} \label{subsec:n=4}
There are two non-trivial types, as follows. 

\medskip

(a) {\bf Type (2,2)}. Here $r=1$ and $\xi = \ii\diag(1,1,0,0)$,
and, as in Proposition \ref{prop:r=0,1}(ii),
$$
A = \begin{pmatrix} 1 & 0 &-g & 0 \\
					0 & 1 & 0 & g \\
					0 & 0 & 1 & 0 \\
					0 & 0 & 0 & 1 \end{pmatrix}.
$$
for some arbitrary meromorphic function $g$ on $M$.
Then $W = V \oplus \la \HH_+$, where $V$ is the maximally 
isotropic subbundle of $\CC^4$ spanned by the last two columns $c_3$ and $c_4$
of $A$ and the extended solution $\Phi=[A\ga_{\xi}]$ is $\Phi=\pi_V+\la\pi_V^{\perp}$. The corresponding harmonic map $\Phi_{-1}$ is the holomorphic map $V:M \to \O(4)/\U(2)$.
More explicitly, it is the composition:
$$
\begin{gathered}
\xymatrixrowsep{-0.3pc}\xymatrixcolsep{2pc}\xymatrix{
M \ar[r]^g & \Cinfty \ar[r]^{\equiv} & \CP^1 \ar[r]^{\equiv} & \SO(4)/\U(2) \ar[r]^{\text{inclusion}} & \O(4)/\U(2) \ar[r]^{\iota} & \O(4) \\
z \ar[r] & g = g(z) \ar[r] & [1,g] \ar[r] & V = \spa{(c_3,c_4)} \ar[r] & V \ar[r]
& \ii(\pi_V - \pi_{\ov V})	 
}
\end{gathered}
$$

\medskip

(b) {\bf Type (1,2,1)}. Here $r=2$, the maximum possible for $n=4$, and $\xi = \ii\diag(2,1,1,0)$.
We obtain the solution by adding a border to the unique solution $\wt{A}=I$ of type (2).  Then we have two new entries $a_{12}$, $a_{13}$ in the $\g_1$-position (i.e., on the block superdiagonal), we set
 $a_{12} = -g_1$, $a_{13} = -g_2$ where $g_1,g_2$ are arbitrary meromorphic functions.  Filling in the last column by algebra
(see \S \ref{subsec:add-border}) gives
$$
A = \begin{pmatrix} 1 & -g_1 & -g_2 &  -g_1 g_2 \\
					0 &   1  &   0  &   g_2 \\
					0 &   0  &   1  &   g_1 \\
					0 &   0  &   0  &    1  \end{pmatrix}.
$$

Let $h$ denote the span of the last column $c_4$, 
thus $h = [1,g_1,g_2,-g_1g_2]^{\TT}$ where $\hphantom{.}^{\TT}$ denotes the second transpose as in \S \ref{subsec:On}; by \eqref{cx-orthog} the polar
$h^{\circ} = \ov{h}^{\perp}$ of $h$ is the span of the last three columns.  The above $A$ gives
$W = h \oplus \la h^{\circ} \oplus \la^2\HH_+$,
and as in Proposition \ref{prop:r=2}, the corresponding extended solution is
$$
\Phi = (\pi_h + \la \pi_h^{\perp}) 
					(\pi_{h^{\circ}} + \la \pi_{h^{\circ}}^{\perp})
=
\la (\pi_h + \la \pi_h^{\perp})(\pi_{\ov{h}} + \la^{-1} \pi_{\ov{h}}^{\perp}).
$$
This is an extended solution of the real mixed pair
$h \oplus \ov{h}: M \to G_2(\R^4)$ (or its orthogonal complement).
More explicitly, it is the composition:
$$
\begin{gathered}
\xymatrixrowsep{-0.3pc}\xymatrixcolsep{2pc}\xymatrix{
M \ar[r]^{\!\!\!\!\!\!\!\!\!\!\!\!\!\!\!\!\!\!\!\!\!\!(g_1,g_2)} & \Cinfty\times\Cinfty \ar[r]^{\equiv} & \CP^1\times\CP^1 \ar[r]^{\:\:\:\:\equiv}& Q_2
	\ar[r]^{\!\!\!\!\!\!2:1} & G_2(\R^4) \ar[r]^{\iota} & \O(4) \\
z \ar[r] & (g_1,g_2) = (g_1(z),g_2(z)) \ar[r] & ([1,g_1],[1,g_2]\ar[r] & h \ar[r] &  h\oplus\ov{h}
	\ar[r] & \pi_{h\oplus\ov{h}} - \pi_{h\oplus\ov{h}}^{\perp}
}
\end{gathered}
$$

Up to now, there have been no equations to satisfy and no terms in $\la$; this shows the following, which is a consequence of \cite[Proposition 6.20]{unitons}.

\begin{proposition} \label{prop:n-leq-4} 
When $n \leq 4$,
\begin{enumerate}
\item[(i)] every extended solution of canonical type  $M \to \Omega\O(n)$ is $S^1$-invariant$;$
\item[(ii)] in particular, every extended solution of canonical type  $M \to \Omega\O(n)$ satisfies the symmetry condition
\eqref{Phi-Grass}, and so the corresponding harmonic map $\varphi = \Phi_{-1}$ maps into a real Grassmannian or into $\O(2m)/\U(m)$ with $n = 2m$.
\end{enumerate}
\end{proposition}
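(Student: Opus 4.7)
The plan is to argue directly from the structural description of $\A_{\xi}^{\rr}$ established earlier, invoking Remark~\ref{rem:A}(v), which says that $\Phi=[A\ga_{\xi}]$ is $S^1$-invariant precisely when $A$ is independent of $\la$. By Proposition~\ref{prop:BuGu-On}, any extended solution of canonical type is of this form, so it suffices to show that for $n \leq 4$ and every canonical element $\xi$, the matrix $A$ can only depend on $z \in M$.

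First I would enumerate the possible canonical elements of $\Omega_r\U(n)^{\rr}$ for $n \leq 4$ using Definition~\ref{def:can-el-R} together with the palindromic condition $t_i = t_{r-i}$ and the rider~(R). Apart from the trivial type $(n)$ in each dimension, the possibilities are: the type $(1,1,1)$ in dimension $3$; and the types $(2,2)$ and $(1,2,1)$ in dimension $4$. Note in particular that the type $(1,1,1,1)$ is excluded by~(R).

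Next, for each of these non-trivial types I would check that every entry of $A$ is of degree $0$ in $\la$. Entries on the block superdiagonal ($\xi_i - \xi_j = 1$) have degree at most $\xi_i - \xi_j - 1 = 0$ by the very definition of $\A_{\xi}^{\cc} \supset \A_{\xi}^{\rr}$ (Definition~\ref{def:A-complex}). The only entries with $\xi_i - \xi_j \geq 2$ in these three types are $a_{13}$ in type $(1,1,1)$ and $a_{14}$ in type $(1,2,1)$, both of which lie on the second diagonal $j = \ibar$ and satisfy $\xi_i - \xi_{\ibar} = 2$. For such entries, Lemma~\ref{lem:top-right-deg} sharpens the bound to $\xi_i - \xi_{\ibar} - 2 = 0$. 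Hence $A$ is independent of $\la$ in every case, proving~(i) via Remark~\ref{rem:A}(v).

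Part~(ii) is then immediate: setting $\mu = -1$ in the $S^1$-invariance relation \eqref{Phi-S1-invt} gives the symmetry condition \eqref{Phi-Grass}, and the discussion in \S\ref{subsec:On-1} of extended solutions satisfying \eqref{Phi-Grass} identifies $\varphi = \pm\Phi_{-1}$ (or $\pm\ii\Phi_{-1}$ when $r$ is odd) as a harmonic map into a real Grassmannian $G_*(\R^n)$ or into $\O(2m)/\U(m)$. There is no real obstacle here; the argument is essentially a finite enumeration, and the only subtle point is recognizing that Lemma~\ref{lem:top-right-deg} is exactly tailored to kill the one potentially $\la$-dependent entry in each of the small-dimensional types.
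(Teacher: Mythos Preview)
Your argument is correct and follows essentially the same line as the paper: both establish that for $n\leq 4$ the matrix $A$ has no $\la$-dependence and then invoke Remark~\ref{rem:A}(v). The paper, however, derives this as a byproduct of the explicit case-by-case computations in \S\S\ref{subsec:n=3}--\ref{subsec:n=4}, simply observing afterwards that ``there have been no equations to satisfy and no terms in $\la$''. Your route is slightly more direct: rather than computing $A$ explicitly for each type, you read off the degree bounds from Definition~\ref{def:A-complex} and use Lemma~\ref{lem:top-right-deg} to handle the single second-diagonal entry in types $(1,1,1)$ and $(1,2,1)$. This is a cleaner a~priori argument that does not depend on having the classification in hand; the paper's approach, on the other hand, has the advantage of producing the explicit parametrizations needed for the rest of \S\ref{sec:exs} as a side effect.
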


That neither statement is true for $n=5$ is shown by the examples in \S \ref{subsec:n=5}(a) and (c) below.

Note that the proposition together with the last statement of Proposition \ref{prop:BuGu-On} shows that \emph{every harmonic map of finite uniton number from a surface to $\O(n)$ with $n \leq 4$ has an associated extended solution which is $S^1$-invariant}.

\subsection{Dimension n=5} \label{subsec:n=5}
All solutions are obtained from one of the two $n=3$ cases of \S \ref{subsec:n=3}, i.e., type (3) or type (1,1,1), by adding a border.  This gives
three non-trivial types, as follows.

\medskip

(a) {\bf Type (2,1,2)}. Here $r=2$ and $\xi = \ii\diag(2,2,1,0,0)$.
We apply the algorithm in the proof of Theorem \ref{th:gen} to obtain this case from
the $(1,1,1)$ case \eqref{n=3}; we shall give the details in the non-degenerate case, i.e., when $g$ is non-constant.
We have one new entry $a_{13}$ in the $\g_1$-position; we initially set this equal to 
an arbitrary meromorphic function $\nu_1$.  Write
$a_{14} = a_{14}^0 + \la a_{14}^1$.  Then  $a_{14}^1$ is arbitrary, say $\sigma$, and $a_{14}^0$ satisfies
$(a_{14}^0)' = -g' a_{13}$ $\mod \la$. 
According to the algorithm, to integrate this, we replace our initial choice $\nu_1$ of parameter by a new parameter $\nu = a_{14}^0$ so that $\nu_1 = \nu^{(1)}$, where generalized derivatives
$\nu^{(d)}$ are taken with respect to $g$.
  As no further integrations are necessary, $\nu$ is our final parameter. Then, filling in the last column by algebra, i.e., using $(c_i,c_5)=0$ for $i=3,4,5$
(see \S \ref{subsec:add-border}), we obtain
$$
A = \begin{pmatrix}
1 & 0 & -\nu^{(1)} & \nu + \la\sigma & -\half(\nu^{(1)})^2 \\
0 & 1 & g            & -\half g^2   & -\nu + g \nu^{(1)} - \la\sigma \\
0 & 0 & 1            &  -g        & \nu^{(1)} \\
0 & 0 & 0            &   1        & 0 \\
0 & 0 & 0            &   0        & 1 \end{pmatrix}.
$$

By Remark \ref{rem:A}(v), this gives an $S^1$-invariant extended solution $\Phi=[A\ga_{\xi}]$ if and only if $\sigma \equiv 0$, i.e., $\sigma$ is identically zero;
in which case it has corresponding harmonic map $\varphi = \al_1 \oplus \ov{\al_1}$ where $\al_1$ is the span of the last two columns.
Define $h:M \to \CP^4$ as the span of $c_5 + \nu^{(2)} c_4$.  When $\nu^{(3)} \not\equiv 0$, the last two columns $c_4,c_5$ are spanned by $h$ and its derivative, thus 
$\varphi = h_{(1)} \oplus \ov{h_{(1)}}: M \to G_4(\R^5)$. Its orthogonal complement is the harmonic map $\varphi^{\perp}:M \to \RP^4$ given by the middle vertex of the following harmonic sequence ---
 by being careful with orientations $\varphi^{\perp}$ actually defines a map into $S^4$.
$$
h \to G^{(1)}(h) \to \varphi^{\perp} \to \ov{G^{(1)}(h)} \to \ov{h}.
$$
If $\sigma$ is not identically zero, then the harmonic map $\Phi_{-1}$ does not lie in a Grassmannian.

\begin{remark} This example is equivalent to that of \cite[Example 6.21]{unitons}.
The reality conditions (i)--(iii) of that example, which were hard to solve using the methods of \cite{unitons}, are automatically satisfied by our method.
\end{remark}

\medskip

(b) {\bf Type (1,3,1)}, so $r=2$. This is obtained from $n=3$, type (3), i.e., the identity matrix, by adding a border giving
$$
A = \begin{pmatrix}
1 & \nu_1 & \nu_2    & \nu_3&   -\nu_1\nu_3 - \half \nu_2^{\;2} \\
0 & 1 & 0            &   0        & -\nu_3   \\
0 & 0 & 1            &   0        & -\nu_2 \\
0 & 0 & 0            &   1        & -\nu_1\\
0 & 0 & 0            &   0        & 1 \end{pmatrix}.
$$
The resulting extended solution and harmonic map are described by Proposition \ref{prop:r=2}(ii).

\medskip 

(c) {\bf Type (1,1,1,1,1)}. Here $r=4$ and $\xi = \ii\diag(-2,-1,0,1,2)$.
As in the $(2,1,2)$ case above, we apply the algorithm in the proof of Theorem \ref{th:gen} to obtain this case from the $(1,1,1)$ case \eqref{n=3}.
As in Theorem \ref{th:S1-11}, this shows that any $S^1$-invariant extended solution
with middle $3 \times 3$ matrix $\wt{A}$ non-degenerate, i.e., $g$ non-constant,
has a complex extended solution $\Psi = A\ga_{\xi}$ where
\be{11111}
A = \begin{pmatrix}
1 & - \nu_1^{(2)} & \nu_1^{(1)} & \nu_1  
							& \nu_1 \nu_1^{(2)}-\half {\bigl(\nu_1^{(1)}\bigr)}^{\!2} \\
0 & 1         & -g      & -\half g^2 & -\nu_1 + g\nu_1^{(1)} - \half g^2 \nu_1^{(2)} \\
0 & 0         & 1       & g          & -\nu_1^{(1)} + g \nu_1^{(2)} \\
0 & 0         & 0       & 1          &  \nu_1^{(2)}  \\
0 & 0         & 0       & 0          & 1 \end{pmatrix}
\ee
for arbitrary meromorphic functions $g$ and $\nu_1$ with $g$ non-constant, and generalized derivatives are taken with respect to $g$. When $\wt{A}$ is degenerate, i.e.\ $g$ is constant, we obtain a simpler formula, see Example \ref{ex:357}.

Note that $A$ itself is non-degenerate if and only if both $g$ and $\nu_1^{(2)}$ are non-constant; equivalently, the last column spans a full holomorphic map $h:M \to \CP^n$.
Then $\varphi=\Phi_{-1}$ is the harmonic map
$\varphi = h \oplus G^{(2)}(h) \oplus G^{(4)}(h)$;
as in Theorem \ref{th:11-appl}, $h$ totally isotropic, i.e., $G^{(4)}(h) = \ov{h}$, so that
$\varphi$ is a harmonic map into the \emph{real} Grassmannian $G_3(\R^5)$.   Also, $G^{(2)}(h)$ defines a harmonic map into $\RP^4$ and into its double cover $S^4$.
Finally note that the middle three components of $h$ give a `null curve' in $\C^3$, see \S \ref{subsec:null-R3}.  

We now look for the general solution $A = A_0 + \la A_1 + \la^2 A_2$ with $A_0$ non-degenerate.  Of course,
$A_0$ is given by \eqref{11111}, but there are now two more initial parameters $\nu_2^1$
and $\nu_3^1$ with  $a_{13}^1 = \nu^1_2$ and $a_{14}^2 = \nu_3^1$.
As in the proof of Theorem \ref{th:gen}, we have to satisfy the equation
$$
(a^1_{14})' = \rho_{34}'a^1_{13} + \rho_{24}'a^0_{12} = g'\nu^1_2\,.
$$
Following our algorithm, we replace $\nu_2^1$ by $\nu_2^2 = a^1_{14}$ where
$(\nu^2_2)^{(1)} := (\nu^2_2)'/g' = \nu^1_2$ so that
$a^1_{13} = (\nu^2_2)^{(1)}$. Writing $\nu_2 =\nu^2_2$ and $\nu_3= \nu_3^1$ for our final choice of parameters, this gives
$$
\la A_1 + \la^2 A_2 = \begin{pmatrix}
0 & 0 & \la \nu_2^{(1)} & \la \nu_2 + \la^2 \nu_3 & \la\zeta_1 + \la^2 \zeta_2 \\
0 & 0 & 0 & 0 & -\la(\nu_2 - g \nu_2^{(1)}) - \la^2 \nu_3 \\
0 & 0 & 0 & 0 & -\la \nu_2^{(1)} \\
0 & 0 & 0 & 0 & 0 \\
0 & 0 & 0 & 0 & 0 \end{pmatrix}.
$$
where $\zeta_1$ and $\zeta_2$ are determined by algebra.
To see what harmonic map this gives, writing $H_i$ for the last column of $A_i$ \ $(i=0,1,2)$ and $h=\spa{H_0}$, we have
$$
W = \spa(H_0+\la H_1 + \la^2 H_2) \oplus \la\{\spa(H_0 + \la H_1)\}_{(1)} + \la^2 h_{(2)} \oplus \la^3 h_{(3)} + \la^4 \HH_+\,.
$$
By Remark \ref{rem:A}(v), this satisfies the symmetry condition \eqref{Phi-Grass} (and so gives a harmonic map into a Grassmannian) if and only if the parameter $\nu_2 \equiv 0$,
equivalently $H_1 \equiv 0$. It gives an $S^1$-invariant solution if and only if $\nu_2 \equiv \nu_3 \equiv 0$,
equivalently $H_1 \equiv H_2 \equiv 0$.
In all cases, we can find the alternating factorization \eqref{fact} of $\Phi$ into unitons by using \eqref{alt-fact} and \eqref{alpha}.
 We work this out for the Grassmannian case $\nu_2\equiv 0$:  for simplicity we write $H=H_0$ and $K=H_2$; then the extended solution is
$$
\Phi = (\pi_{\al_1} + \la\pi_{\al_1}^{\perp}) 
(\pi_{\al_2}+\la\pi_{\al_2}^{\perp})(\pi_{\al_3}+\la\pi_{\al_3}^{\perp}) (\pi_{\al_4}+\la\pi_{\al_4}^{\perp})
$$
where 
$\al_1 = h_{(2)}$, $\al_2 = h_{(1)}$,
$\al_3 = \beta \oplus G^{(2)}(h) \oplus G^{(3)}(h)$
with $\beta = \spa\{H + \pi_{h_{(2)}}K,\,H' + \pi_{h_{(2)}}K'\}$, and
$\al_4 = \spa\{H + \pi_{h_{(2)}}K\}$.
Note that $\al_2$ (resp. $\al_4$) is isotropic and is the polar of
$\al_1$ (resp. $\al_3$).  Thus $\Phi$ is the product of two `real' quadratic factors:
\be{Phi-11111}
\Phi = (\pi_{\al_2} + \la\pi_{\al_2 \oplus \ov{\al_2}}^{\perp} + \la^2\pi_{\ov{\al_2}}) (\pi_{\al_4} + \la\pi_{\al_4 \oplus \ov{\al_4}}^{\perp} + \la^2\pi_{\ov{\al_4}}).
\ee
This gives the harmonic map $\varphi=\Phi_{-1}$ as a product of two maps into $\O(5)$: 
\be{varphi-11111}
\varphi = (\pi_{\al_2 \oplus \ov{\al_2}} - \pi_{\al_2 \oplus \ov{\al_2}}^{\perp})
(\pi_{\al_4 \oplus \ov{\al_4}} - \pi_{\al_4 \oplus \ov{\al_4}}^{\perp}),
\ee
which is the map $M \to G_3(\R^5)$ given by
$\varphi = \al_4 \oplus G^{(2)}(h) \oplus \ov{\al_4}$.

When $H_1 \equiv 0$ but $H_2 \not\equiv 0$, \eqref{varphi-11111} gives an example of a harmonic map into $G_2(\R^5)$ with non-$S^1$-invariant extended solution.
When $H_1 \not\equiv 0$, \eqref{Phi-11111} doesn't satisfy the symmetry condition \eqref{Phi-Grass}.  Thus, in this example,
the corresponding harmonic map $\Phi_{-1}$ does not, in general, have values in a Grassmannian.  By Proposition \ref{prop:n-leq-4}, this cannot happen in dimension $n \leq 4$.

\subsection{Dimension n=6} \label{subsec:n=6}
All solutions are obtained from one of the three $n=4$ cases in \S \ref{subsec:n=4} by adding a border.  This gives five non-trivial types, as follows.

\smallskip

(a) {\bf Type (1,4,1)}, so $r=2$. This is similar to $n=5$, type $(1,3,1)$ above. 

\medskip

(b) {\bf Type (3,3)}.  This has $r=1$ and is obtained from type $(2,2)$ by adding a border; there are two new parameters $\nu_1,\nu_2$ in the
$\g_1$-position, call these $-h$ and $-k$ giving the $S^1$-invariant solution depending on three arbitrary meromorphic functions:
$$
A = \begin{pmatrix} 1 & 0 & 0 & -h & -k & 0 \\
					0 & 1 & 0 & -g &  0 & k \\
					0 & 0 & 1 &  0  & g & h \\
					0 & 0 & 0 &  1 &  0 & 0 \\
					0 & 0 & 0 &  0 &  1 & 0 \\
					0 & 0 & 0 &  0 &  0 & 1 \end{pmatrix}.
$$
By Proposition \ref{prop:r=0,1}(ii), the corresponding harmonic map is the holomorphic map
$V: M \to \SO(6)/\U(3)$
given by the maximally isotropic subspace $V$ spanned by the last three columns of $A$.
Now the holomorphic map $\C^3 \to \SO(6)/\U(3)$ given by $(g,h,k) \mapsto V$  extends to a holomorphic diffeomorphism from $\CP^3$ to $\SO(6)/\U(3)$ given by $[\ell,g,h,k] \mapsto $ the span of the four vectors
$(0,0,\ell,0,-g,-h)$, $(0,\ell,0,g,0,-k)$, $(\ell,0,0,h,k,0)$, $(g,-h,k,0,0,0)$; 
whether $d$ is zero or non-zero, these vectors are linearly dependent and span a maximally isotropic subspace of dimension $3$, cf.\ \cite[Example 2.4]{baird-wood-higher} or
\cite[\S 3.1]{borisov-salamon-viaclovsky}, thus $V$ defines a holomorphic map into $\CP^3$. 

\medskip

(c) {\bf Type (2,2,2)}.  This has $r=2$ and is obtained from type (1,2,1) in \S \ref{subsec:n=4}
above by adding a border.
The entries in the first row in the $\g_1$-position  are $a_{13}$ and $a_{14}$, giving two new parameters, and the $\la$-term of $a_{15}$ gives a further parameter.
Carrying out our algorithm in the case that $g_1$ and $g_2$ are non-constant gives
$$
A = \begin{pmatrix} 1 & 0 & (\nu_1)^{(1)} & (\nu_2)^{(1)} & \nu_1+\nu_2+\la\nu_3  & a_{16} \\
					0 & 1 & -g_1   & -g_2 &  -g_1 g_2 & a_{15} \\
					0 & 0 &   1    &   0  &   g_2     & a_{14} \\
					0 & 0 &   0    &   1  &   g_1     & a_{13} \\
					0 & 0 &   0    &   0  &    1      & 0 \\
					0 & 0 &   0    &   0  &    0      & 1
\end{pmatrix}.
$$
Here $(\nu_1)^{(1)} = \nu_1'/g_2'$ and $(\nu_2)^{(1)} = \nu_2'/g_1'$, and our final new parameters are
$\nu_1$, $\nu_2$ and $\nu_3$, together with the existing parameters $g_1$, $g_2$.  The remaining entries $a_{in}$ are given by algebra, i.e., using $(c_i,c_6)=0$ for $i=3,4,5,6$.
This illustrates that our algorithm does not always give an injective map, indeed we may replace $\nu_1$ and $\nu_2$ by $\nu_1+c$ and $\nu_2-c$ for any constant $c$.  Also, although it is surjective \emph{locally} as $\nu_1$ and $\nu_2$ can be found by integration from $a_{13}$ and $a_{14}$, it is not \emph{globally} surjective.  For example, if $M = S^2$, $g_1 = g_2 = z$ and $a_{13}=-a_{14} = 1/z$, then
$\nu_1 = -\nu_2 =  \int(1/z)dz = \log z$ which is not globally defined, though $a_{15} = 0$ is.

However, we can modify our algorithm for this case as follows. Replace the final new parameters
$\nu_1$ and $\nu_2$ by $\wt\nu_1$, $\wt\nu_2$ with $a_{13} = \wt\nu_1$ and $a_{15} = \wt\nu_2 + \la\nu_3$, 
then we obtain
$$
A = \begin{pmatrix} 1 & 0 & \wt\nu_1 & (\wt\nu_2'-g_2'\wt\nu_1)/g_1'
										  & \wt\nu_2+\la\nu_3  & a_{16} \\
					0 & 1 & -g_1   & -g_2 &  -g_1 g_2 & a_{15} \\
					0 & 0 &   1    &   0  &   g_2     & a_{14} \\
					0 & 0 &   0    &   1  &   g_1     & a_{13} \\
					0 & 0 &   0    &   0  &    1      & 0 \\
					0 & 0 &   0    &   0  &    0      & 1
\end{pmatrix}.
$$
(which holds even if $g_2$ is constant)
where the remaining entries $a_{ij}$ are calculated by algebra, as usual.
The resulting harmonic maps are described by
Proposition \ref{prop:r=2}(i).

\medskip

(d) {\bf Type (1,2,2,1)}.
This has $r=3$ and is obtained from type $(2,2)$ by adding a border; it has two new initial parameters $\nu_1^1$, $\nu_2^1$ in the $\g_1$, i.e., block superdiagonal positions $a_{12}, a_{13}$, and two further parameters $\nu_3, \nu_4$ on the second block superdiagonal.  Carrying out our algorithm in the non-degenerate case when $g$ is non-constant replaces
$\nu_1^1$, $\nu_2^1$ by $\nu_1$, $\nu_2$ giving 
$$
A = \begin{pmatrix} 1 & (\nu_1)^{(1)} & (\nu_2)^{(1)} & \nu_1+\la \nu_3
				   & -\nu_2+\la \nu_4 & \zeta_0+\la\zeta_1 \\
0 & 1 & 0 & g &  0 & -g \nu_2^{(1)} + \nu_2 - \la \nu_4 \\
0 & 0 & 1 & 0 & -g &  g \nu_1^{(1)} - \nu_1 - \la \nu_3	\\
0 & 0 & 0 & 1 & 0 & -\nu_2^{(1)} \\
0 & 0 & 0 & 0 & 1 & -\nu_1^{(1)} \\
0 & 0 & 0 & 0 & 0 & 1 \end{pmatrix}.
$$

Here our final parameters $g,\nu_1, \nu_2, \nu_3, \nu_4$ are arbitrary meromorphic functions, and
all generalized derivatives are taken with respect to $g$.
The top-right entry $\zeta_0+\la\zeta_1$ is determined by algebra from
$(c_6,c_6)=0$, in fact, $\zeta_0 = \nu_1^{(1)}\nu_2 - \nu_2^{(1)}\nu_1$ and 
$\zeta_1 = -\nu_1^{(1)}\nu_4 - \nu_2^{(1)}\nu_3$. 
We now calculate the corresponding extended solution.
Write the $j$th column of $A$ as $c_j = c_j^0 + \la c_j^1$, so $c_j^1=0$ \ $(j=1,2,3)$; then, as in \S \ref{subsec:On-1},
$$
W = \spa{c_6} \oplus \la\spa\{c_6,c_5,c_4\} \oplus
\la^2\spa\{c_6,c_5,c_4,c_3,c_2\}  + \la^3\HH_+\,.
$$
This is the extended solution of a map into a Grassmannian if and only if $\nu_3 \equiv \nu_4 \equiv 0$; in that case we have an $S^1$-invariant extended solution:
$$
W = \delta_1 \oplus \la\delta_2 \oplus \la^2 \delta_3 + \la^3\HH_+
$$
where $0 = \delta_0 \subset \delta_1 \subset \delta_2 \subset \delta_3 \subset \delta_4 = \CC^6$ are the subbundles given by
 $\delta_1 = \spa\{c_6^0\}$, $\delta_2 = \spa\{c_6^0,c_5^0,c_4^0\}$ and 
$\delta_3 = \spa\{c_6^0,c_5^0,c_4^0,c_3^0,c_2^0\}$.
Note that $\delta_3$ is the polar of $\delta_1$ and $\delta_2$ is self-polar, i.e., maximally isotropic. 
As in \S \ref{subsec:On-1}, the corresponding
harmonic map $\varphi_0$ is $\psi_0 \oplus \psi_2$ where
$\psi_i = \delta_i ^{\perp} \cap \delta_{i+1}$, or its orthogonal complement 
$\psi_1 \oplus \psi_3$.  Since these are conjugates of each other, 
$\varphi_0$ is a harmonic map into $\O(6)/\U(3)$.

In the general case with $\nu_3$ or $\nu_4$ not necessarily zero,
we calculate the alternating factorization \eqref{fact} into unitons from \eqref{alt-fact} and \eqref{alpha} to be 
$\Phi = (\pi_{\al_1} + \la\pi_{\al_1^{\perp}}) 
(\pi_{\al_2} + \la\pi_{\al_2^{\perp}})(\pi_{\al_3} + \la\pi_{\al_3^{\perp}})$
where 
$\al_1 = \delta_2$,
$\al_2 = \spa\{c_6^0 + \pi_{\delta_2}^{\perp}c_6^1,\,
c_5^0 + \pi_{\delta_2}^{\perp}c_5^1,\, c_4^0 + \pi_{\delta_2}^{\perp}c_4^1\} \oplus \psi_2$ and
$\al_3 = \spa\{c_6^0 + \pi_{\delta_2}^{\perp}c_6^1\}$.
Note that $\al_1$ is maximally isotropic, and $\al_3$ is isotropic and is the polar of $\al_2$  (in the $S^1$-invariant case, $\al_2=\delta_3$ and
$\al_3 = \delta_1$).
The corresponding harmonic map $\varphi = \ii\Phi_{-1}$ is given by the product of the two maps into $\O(6)$: 
$\varphi = \ii(\pi_{\delta_2} - \pi_{\delta_2}^{\perp}).
(\pi_{\al_3 + \ov{\al_3}} - \pi_{\al_3 + \ov{\al_3}}^{\perp})$.
This example is related to \cite[Example 6.22]{unitons} (that example has a $\la^2$-term which can be removed by a suitable transformation of the data);
it provides extended solutions of harmonic maps into $\O(6)$ which do not lie in a Grassmannian but have $S^1$-invariant limits into $\O(6)/\U(3)$.

\medskip

(e) {\bf Type (1,1,2,1,1)}  This has $r=4$ and, like type (2,2,2) above, is obtained from $n=4$, type $(1,2,1)$ by adding a border.  However, due to the special nature of 
$\SO(4)$ as being double-covered by the product of $\SU(2)$ with itself, there is an easier way which involves \emph{first finding the new last column} of $A$ then filling in the top-right element and new first row by algebra (see \S \ref{subsec:add-border}); for the $S^1$-invariant case this is as follows, with
all generalized derivatives with respect to $g_1$:

Write the last column as $[1,\chi_1,\chi_2,\chi_3,\chi_4,\zeta]^{\TT}$.
{}From the extended solution equation \eqref{GrM} we have the following, assuming that $g_1$ is non-constant. 

(i) First, $\chi_2' = g_1 \chi_1'$.  Integrating by parts gives
$$
\chi_2 = g_1 \chi_1 - \int\! g_1' \chi_1\,.
$$
Replace $\chi_1$ by a new parameter $h_1$ and set $\chi_1 = h_1^{(1)} := h_1'/g_1'$.  Then
$\chi_2 = g_1 h_1^{(1)} - h_1$.

(ii) Next, $\chi_3' = g_2 \chi_1'$ so that
$\chi_4' =  -g_1 g_2 \chi_1' = -g_1 \chi_3'$.
{}From this equation we have, in a similar way to (i),
$$
\chi_4 = -g_1 \chi_3 + \int\! g_1' \chi_3\,.
$$
Replace $\chi_3$ by a new parameter $h_2$ and set $\chi_3 = h_2^{(1)} := h_2'/g_1'$.  Then
$\chi_4 = -g_1 h_2^{(1)} + h_2$.

The remaining entries $a_{1j}$ are found by algebra, i.e., using 
$(c_j,c_n)=0$ for $j=2,\ldots, n$.
Thus we obtain, with generalized derivatives taken with respect to $g_1$,
\be{11211}
A = \begin{pmatrix}
1 & -h_1^{(1)} & h_1 & g_2 h_1^{(1)}-h_2^{(1)} & g_2 h_1 - h_2 
									& h_1 h_2^{(1)} - h_2 h_1^{(1)} \\
0 & 1       & -g_1   & -g_2   & -g_1 g_2 & h_2 - g_1 h_2^{(1)} \\
0 & 0       & 1      & 0      & g_2      & h_2^{(1)} \\
0 & 0       & 0      & 1      & g_1      & -h_1 + g_1 h_1^{(1)} \\
0 & 0       & 0      & 0      & 1        & h_1^{(1)} \\
0 & 0       & 0      & 0      & 0        & 1 
\end{pmatrix}.
\ee
Here $g_1,h_1,h_2$ are arbitrary meromorphic functions.  If $h_1^{(1)}$ is non-constant, then $g_2 = h_2^{(2)}/h_1^{(2)}$.
Note how this departs from our usual algorithm by replacing a parameter in the middle $4 \times 4$ matrix $\wt A$, in this case $g_2$ by $h_2$.  Note that the parameters $g_1,h_1,h_2$ can be read off from the matrix $A$ as entries, or combinations of entries.
Note also that the middle four entries of the last column give the standard formula for null curves in $\C^4$,
see \S \ref{subsec:null-R4}.

\begin{proposition} For \ $m \leq 3$, any harmonic map of finite uniton number  $M \to \O(2m)/\U(m)$ has an $S^1$-invariant associated extended solution.
\end{proposition}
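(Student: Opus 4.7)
The plan is to split into cases on $m$ and use the explicit matrix forms already computed in \S \ref{subsec:n-leq-6}. For $m=1$ the target $\O(2)/\U(1)$ is discrete, so any harmonic map is constant and the trivial extended solution $\Phi = I$ is $S^1$-invariant. For $m=2$ one invokes Proposition \ref{prop:n-leq-4}(i), which asserts that every canonical-type extended solution into $\Omega\O(4)$ is already $S^1$-invariant; combined with Proposition \ref{prop:BuGu-On} this gives the result.

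For $m=3$ I would combine the existence result recalled in \S \ref{subsec:On-1} --- that every harmonic map into $\O(2m)/\U(m)$ of finite uniton number admits an associated extended solution with $r$ odd satisfying the symmetry condition \eqref{Phi-Grass} --- with a case analysis on the canonical type. Assuming one can arrange Proposition \ref{prop:BuGu-On} to produce an equivalent canonical-form extended solution that still has $r$ odd and satisfies \eqref{Phi-Grass}, the problem reduces to classifying canonical elements of $\Omega_r\U(6)^{\rr}$ with $r$ odd. The rider (R) in Definition \ref{def:can-el-R}, together with $\sum t_i = 6$, $t_i = t_{r-i}$ and each $t_i \geq 1$, rules out $r=5$ (where the rider demands $t_2 = t_3 \geq 2$, forcing $\sum t_i \geq 2(1+1+2) = 8 > 6$) and leaves only the types $(3,3)$ with $r=1$ and $(1,2,2,1)$ with $r=3$.

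For each remaining type I verify $S^1$-invariance using Remark \ref{rem:A}(v), which says that $\Phi$ is $S^1$-invariant iff $A$ is independent of $\la$, while $\Phi$ satisfies \eqref{Phi-Grass} iff $A$ is a function of $\la^2$. For type $(3,3)$, case (b) of \S \ref{subsec:n=6} shows that the matrix $A$ is manifestly independent of $\la$. For type $(1,2,2,1)$, case (d) of \S \ref{subsec:n=6} gives an $A$ in which every entry has degree at most one in $\la$; requiring $A$ to be a function of $\la^2$ then forces all $\la$-coefficients --- namely $\nu_3$, $\nu_4$ and consequently $\zeta_1 = -\nu_1^{(1)}\nu_4 - \nu_2^{(1)}\nu_3$ --- to vanish, which is exactly the $S^1$-invariant case.

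The hard part is justifying the technical assumption above, namely that the reduction of Proposition \ref{prop:BuGu-On} can be carried out while preserving both the parity of $r$ and the symmetry condition \eqref{Phi-Grass}. My plan would be to revisit its proof and check that the $\Z/2$-symmetry on $\La\U(n)^{\cc}$ underlying \eqref{Phi-Grass} descends to a symmetry on the centreless quotient $\SO(n)/C$, and that the Bruhat decomposition of \cite{burstall-guest} and the resulting equivalence loop can be chosen compatibly with this symmetry; alternatively, one can apply the Burstall--Guest theory directly to the compact symmetric space $\O(2m)/\U(m)$.
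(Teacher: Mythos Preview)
Your approach is correct and is essentially the paper's argument, presented case by case rather than uniformly. The paper handles all $m\le 3$ at once: from Remark~\ref{rem:A}(v), a symmetric $\Phi=[A\ga_\xi]$ that is not $S^1$-invariant must have $A$ a non-constant function of $\la^2$, so some entry of $A$ has degree $\ge 2$; the degree bounds defining $\A_\xi^{\cc}$ together with Lemma~\ref{lem:top-right-deg} then force either $r\ge 5$, or $r=3$ with the extreme block of size $t_0>1$, and both are excluded for $n=2m\le 6$ once the rider~(R) is imposed. Your check for type $(1,2,2,1)$ that every entry has $\la$-degree at most one is exactly this bound specialised: the only entry with $\xi_i-\xi_j-1=2$ is $a_{16}$, which lies on the second diagonal and so has degree $\le 1$ by Lemma~\ref{lem:top-right-deg}. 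One small caveat: the explicit matrix you cite from \S\ref{subsec:n=6}(d) is only the non-degenerate parametrization ($g$ non-constant), so to make your degree bound airtight you should invoke the general bounds in $\A_\xi^{\rr}$ directly rather than that particular formula.

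You are right to flag the issue of carrying the parity of $r$ and the symmetry condition~\eqref{Phi-Grass} through the reduction to canonical form. The paper does not spell this out either: it simply asserts ``$\varphi$ has a symmetric extended solution $\Phi=[A\ga_\xi]$ with $r$ odd'' with a pointer to \S\ref{subsec:On-1}. Your proposed resolution --- applying the Burstall--Guest theory directly to the inner symmetric space $\O(2m)/\U(m)$, so that the resulting canonical element automatically lies in the correct twisted component --- is the standard way to justify this.
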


\begin{proof}
As in \S \ref{subsec:On-1}, $\varphi$ has a symmetric extended solution
$\Phi= [A\ga_{\xi}]$ with $r$ odd.
By Remark \ref{rem:parity}, if $\Phi$ is not $S^1$-invariant then $A$ must contain a term in $\la^2$.  By Lemma \ref{lem:top-right-deg} this means that, either $r=3$ with $t_1 >1$, or $r \geq 5$.  Given that $\sum_{i=1}^r{t_i} = 2m$, neither of these is possible with $m \leq 3$.
\end{proof}

That this result is sharp is shown by the following example which is a particular case of \cite[Example 6.26]{unitons}.  In that paper, reality conditions had to be solved: this was only done for $m \geq 5$;  by using our approach, the reality conditions in that example are automatic and give us an example for $m=4$. Explicitly, take
$\xi$ of type $(2,2,2,2)$.  By our method we may construct a solution $A: M \to \A_3^{\rr}$ in the form $A = A_0 + \la^2 A_2$ where the penultimate entry of the top row of $A_2$ is a freely chosen parameter $\nu$.  Completing the matrix $A$ by algebra and setting $\Phi=[A\ga_{\xi}]$ gives an extended solution which is $S^1$-invariant if and only if $\nu\equiv 0$.

\section{Null curves, extended solutions and the Weierstrass representation} \label{sec:null-isotropic}

By a \emph{(generalized) minimal surface in $\R^n$} we mean a
non-constant weakly conformal map from a Riemann surface $M$ to $\R^n$ whose image is minimal away from branch points, equivalently, a weakly conformal harmonic map.  Such a map is, on a simply connected domain, the real part of a \emph{null holomorphic curve} by which is meant (somewhat confusingly) \emph{a holomorphic map 
$\chi:M \to \C^n$ with $(\chi',\chi') = 0$ and $\chi'$ not identically zero}.  We extend this definition to \emph{null meromorphic curve}: note that for such a curve,
 $[\chi']:M \to Q_{n-2}$ is a well-defined holomorphic map to the complex quadric and gives the Gauss map of the minimal surface.  The usual Weierstrass representation parametrizes all such $\chi'$ so that $\chi$ is given by an \emph{integral} with real part the minimal surface.  In contrast, in the \emph{Weierstrass representation in free form}, the null curve itself is parametrized and no integral is necessary.  We see how this is related to our work.

\subsection{Null curves in $\C^3$ and extended solutions} \label{subsec:null-R3}

Let $M$ be a Riemann surface.     In Theorem \ref{th:11-appl}, we related Calabi's construction of harmonic maps into spheres with polynomial extended solutions of harmonic maps into $\O(n)$ ($n$ odd) of type $(1,1,\ldots, 1)$.  In the case $n=5$ we can add one further bijection: that with null meromorphic curves, showing how the Weierstrass representation in free form appears naturally from polynomial extended solutions for harmonic maps into $\O(5)$ of type $(1,1,1,1,1)$ and so of the maximum possible uniton number $4$; the corresponding canonical element is $\xi_0 = \ii\diag(4,3,2,1,0)$.  In part (iii), the generalized derivative $\nu^{(2)}$ is taken with respect to $g$.

\begin{theorem} \label{th:Weier3}
The following sets are in one-to-one correspondence$:$
\begin{itemize}
\item[(i)] null meromorphic curves\/ $\chi:M \to \C^3$ with $[\chi']:M \to Q_1$ non-constant$;$

\item[(ii)] non-degenerate $S^1$-invariant extended solutions $\Phi:M \to \Omega_4\U(5)^{\rr}$ of type $(1,1,1,1,1);$

\item[(ii)$'$] non-degenerate solutions $A:M \to (\A_{\xi_0}^{\rr})_0$ to the extended solution equation \eqref{GrM}$;$

\item[(iii)] pairs of meromorphic functions $(g,\nu)$ on $M$ with $g$ and $\nu^{(2)}$ non-constant$;$

\item[(iv)] full totally isotropic holomorphic maps $f:M \to \CP^4;$

\item[(v)]  full harmonic maps $\varphi:M \to \RP^4;$

\item[(vi)] antipodal pairs $\pm\wt\varphi:M \to S^4$ of full
harmonic maps.
\end{itemize}

The bijection from (ii)$'$ to (ii) is given by $\Phi=[A\ga_{\xi_0}]$ as in
Proposition \ref{prop:BuGu-On}.

The bijection from (ii)$'$ to (i) is given by 
$\chi = (a_{45},a_{35},a_{25})$.
That from (ii)$'$ to (iii) is given by
 \be{A-g-nu}
g= a_{34} \quad \text{and} \quad \nu = a_{14},
\ee 
 and that from (ii)$'$ to  (iv) is given by taking the last column: $f = [c_5]$ as in
Theorem \ref{th:11-appl}.  
\end{theorem}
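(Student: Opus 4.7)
The plan is to build on Theorem \ref{th:11-appl} (applied with $n = 5$), which already furnishes the bijections between (ii), (ii)$'$, (iv), (v) and (vi), and to splice in the two new equivalences with the null-curve side (i) and the analytic parameter side (iii). The setup of \S\ref{subsec:n=5}(c), in particular the explicit matrix \eqref{11111}, will do most of the work.

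First, I would establish (ii)$'$ $\leftrightarrow$ (iii). By Theorem \ref{th:S1-11} with $m = 2$, every $A \in (\Sol_{\xi_0}^{\rr})_0$ whose middle $3 \times 3$ block $\wt{A}$ is non-degenerate has the form \eqref{11111}, uniquely determined by the two meromorphic functions $g = a_{34}$ and $\nu = a_{14}$; non-degeneracy of the full matrix $A$ is then equivalent to both $g$ and $\nu^{(2)} = a_{45}$ being non-constant, where generalized derivatives are taken with respect to $g$. This gives the bijection with (iii), and simultaneously provides the formula \eqref{A-g-nu}.

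Next, I would establish (ii)$'$ $\leftrightarrow$ (i). Given $A$ as in \eqref{11111}, set $\chi := (a_{45}, a_{35}, a_{25})$ and compute $\chi'$ using $\nu' = g'\nu^{(1)}$ and $(\nu^{(1)})' = g'\nu^{(2)}$; an almost-telescoping calculation yields
\begin{equation*}
\chi' = (\nu^{(2)})' \,(1,\, g,\, -\tfrac{1}{2} g^2).
\end{equation*}
With respect to the null inner product on $\C^3$ (indices $1,2,3$ with $\bar 1 = 3$), the vector $(1, g, -\tfrac{1}{2}g^2)$ satisfies $2\cdot 1\cdot(-\tfrac{1}{2}g^2) + g^2 = 0$, so $\chi$ is a null meromorphic curve, and $[\chi'] = [1, g, -\tfrac{1}{2}g^2]$ is non-constant precisely when $g$ is non-constant, while $\chi' \not\equiv 0$ precisely when $\nu^{(2)}$ is non-constant. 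So non-degeneracy of $A$ matches the non-degeneracy condition in (i). For the inverse, given a null meromorphic $\chi = (\chi_1, \chi_2, \chi_3)$ with $[\chi']$ non-constant, the null condition $2\chi_1'\chi_3' + (\chi_2')^2 = 0$ forces $\chi' = \chi_1'(1, g, -\tfrac{1}{2}g^2)$ with $g := \chi_2'/\chi_1'$; I would then define
\begin{equation*}
\nu \;:=\; \tfrac{1}{2}g^2 \chi_1 - g\chi_2 - \chi_3,
\end{equation*}
and verify by a direct differentiation (or by integration by parts against the null relation) that $\nu'/g' = g\chi_1 - \chi_2$ and $(g\chi_1-\chi_2)'/g' = \chi_1$, so that $\nu^{(1)} = g\chi_1 - \chi_2$ and $\nu^{(2)} = \chi_1$. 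Substituting these into the definition of $\chi$ recovers $(\chi_1, \chi_2, \chi_3)$, so the map is a bijection. Notice that this definition of $\nu$ uses only algebraic operations and a single quotient of derivatives: \emph{no integration is needed}, which is precisely the Weierstrass representation in free form for null curves in $\C^3$.

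The main obstacle, or rather the main verification, is the consistency of $\nu$ defined from a null curve: one must check that the two generalized derivatives $\nu^{(1)}$, $\nu^{(2)}$ produced by the formula really agree with the entries $a_{24}$ and $a_{45}$ of \eqref{11111}. This is a short but critical computation which uses the null condition in an essential way --- without nullness, the candidate $\nu$ would not satisfy $(\nu^{(1)})' = g'\nu^{(2)}$. Once this is in place, composing the bijection (i) $\leftrightarrow$ (ii)$'$ with the chain from Theorem \ref{th:11-appl} closes the loop and completes the proof.
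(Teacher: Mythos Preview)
Your proposal is correct and follows essentially the same route as the paper, just with the two new bijections established in reversed order: the paper first builds the matrix \eqref{11111-chi} directly from $\chi$ (placing $\chi$ as the middle of the last column, differentiating to fill the remaining columns, and completing the first row by algebra) to get (i)$\leftrightarrow$(ii)$'$, and then reads off $(g,\nu)$ from $A$; you instead invoke Theorem~\ref{th:S1-11} to get (ii)$'\leftrightarrow$(iii) via \eqref{11111} first, and then extract $\chi$ and verify nullness by the explicit computation $\chi' = (\nu^{(2)})'\,(1,g,-\tfrac12 g^2)$. Your explicit formula $\nu = \tfrac12 g^2\chi_1 - g\chi_2 - \chi_3$ is exactly the entry $a_{14}$ of the paper's matrix \eqref{11111-chi}, so the two arguments coincide in content. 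One small point you leave implicit (and the paper makes explicit): before writing $g=\chi_2'/\chi_1'$ you should note that $\chi_1'\not\equiv 0$, which follows from the null relation $2\chi_1'\chi_3' + (\chi_2')^2 = 0$ together with non-constancy of $[\chi']$.
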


\begin{proof}
Given $\chi = (\chi_1,\chi_2,\chi_3)$ as in (i), note first that
$\chi_1'$ is not identically zero; otherwise
since $\chi$ is a null curve, $\chi_1'\chi_3' = -\half \chi_2'$ so that $\chi_2' \equiv 0$ which implies that $[\chi']$ is constant.  There is a unique solution $A:M \to \O(5,\C)$ of type $(1,1,1,1,1)$ to the extended solution equation \eqref{GrM1k-1} with the middle of the last column given by $(a_{45},a_{35},a_{25}) = (\chi_1,\chi_2,\chi_3)$, namely,
\be{11111-chi}
A = \begin{pmatrix}
1 & -\chi_1     &g\chi_1-\chi_2   &\half g^2\chi_1-g\chi_2-\chi_3 &-\chi_1\chi_3 - \half\chi_2^{\;2} \\
0 & 1         & -g      & -\half g^2 & \chi_3 \\
0 & 0         & 1       & g          & \chi_2 \\
0 & 0         & 0       & 1          & \chi_1  \\
0 & 0         & 0       & 0          & 1 \end{pmatrix}
\quad \text{where } g = \chi_2'/\chi_1'.  
\ee
Indeed, all but the first row of $A$ is found by differentiating
four times the middle of the last column; the remaining entries $a_{1j}$ are filled in by algebra, i.e., using $(c_j,c_5)=0$ for $j=2,3,4,5$.  
Thus $\chi \mapsto A$ gives a bijection from set (i) to set (ii)$'$ with inverse $\chi = (a_{45},a_{35},a_{25})$.

\smallskip

Given $A$ as in (ii)$'$, define $(g,\nu)$ by \eqref{A-g-nu}.
 {}From the extended solution equation \eqref{GrM1k-1}, with generalized derivatives with respect to
$g$, $a_{13}= \nu^{(1)}$ and $a_{12}= -\nu^{(2)}$; then $A$ is given by \eqref{11111}. By non-degeneracy of $A$, $g$ and $\nu^{(2)}$ are non-constant.
The assignment $A \to (g,\nu)$ gives a bijection between sets (ii)$'$ and (iii) with inverse given by \eqref{11111}.
\end{proof}

Composing the above bijections we deduce the Weierstrass representation in free form of null meromorphic curves:
\begin{corollary}
There is a bijection between the following sets$:$
\begin{enumerate}
\item[(i)] the set of pairs of meromorphic functions $(g,\nu)$ on $M$ with $g$ and $\nu^{(2)}$ non-constant,
\item[(ii)] the set of null meromorphic curves\/ $\chi:M \to \C^3$ with $[\chi']:M \to Q_1$ non-constant,
\end{enumerate}
given by
\be{Weier3}
\chi = (\nu^{(2)},\, -\nu^{(1)} + g \nu^{(2)},\, -\nu + g\nu^{(1)} - \half g^2 \nu^{(2)}).
\ee
\end{corollary}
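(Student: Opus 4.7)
The plan is to simply compose two of the bijections already established in Theorem \ref{th:Weier3}. Specifically, I would chain the bijection from set (iii) to set (ii)$'$, given by the explicit matrix \eqref{11111}, with the bijection from set (ii)$'$ to set (i), given by $\chi = (a_{45}, a_{35}, a_{25})$. Both bijections (and their inverses) are already proved in the preceding theorem, so there is nothing substantial left to verify.

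The only real content is reading off the relevant entries from matrix \eqref{11111}. Starting from a pair $(g,\nu)$ with $g$ and $\nu^{(2)}$ non-constant, the associated $A \in (\A_{\xi_0}^{\rr})_0$ has last column whose middle three entries are precisely
\[
a_{45} = \nu^{(2)}, \qquad a_{35} = -\nu^{(1)} + g\nu^{(2)}, \qquad a_{25} = -\nu + g\nu^{(1)} - \tfrac{1}{2} g^2 \nu^{(2)},
\]
where generalized derivatives are taken with respect to $g$. Substituting into $\chi = (a_{45}, a_{35}, a_{25})$ yields exactly formula \eqref{Weier3}.

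In the opposite direction, starting from a null meromorphic curve $\chi = (\chi_1, \chi_2, \chi_3)$ with $[\chi']$ non-constant, one recovers $(g, \nu)$ by setting $g = \chi_2'/\chi_1'$ (as already noted in the proof of Theorem \ref{th:Weier3}, $\chi_1'$ is not identically zero) and $\nu = -a_{25}^{} = -\chi_3 + g\chi_2 - \tfrac{1}{2}g^2\chi_1$ after the identifications $\nu^{(2)} = \chi_1$, $\nu^{(1)} = g\chi_1 - \chi_2$. Non-constancy of $g$ and $\nu^{(2)} = \chi_1$ corresponds exactly to non-constancy of $[\chi']$.

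There is no genuine obstacle: every step is a direct consequence of Theorem \ref{th:Weier3}. The only mild care needed is to record that the composition really produces the closed-form expression \eqref{Weier3} with no integration, which is the whole point of calling this a \emph{free} Weierstrass representation.
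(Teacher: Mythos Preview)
Your approach is correct and is exactly what the paper does: the corollary is stated immediately after Theorem~\ref{th:Weier3} with only the sentence ``Composing the above bijections we deduce the Weierstrass representation in free form of null meromorphic curves,'' and no further proof. Your reading of the entries $a_{45},a_{35},a_{25}$ from \eqref{11111} is accurate and yields \eqref{Weier3} as claimed.

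One minor correction: in your description of the inverse, the expression ``$\nu=-a_{25}=-\chi_3+g\chi_2-\tfrac{1}{2}g^2\chi_1$'' has sign errors. Solving \eqref{Weier3} for $\nu$ gives $\nu=-\chi_3-g\chi_2+\tfrac{1}{2}g^2\chi_1$ (and $\nu$ is not simply $-a_{25}$; in the paper's theorem the inverse is read off as $\nu=a_{14}$). This does not affect the argument, since the bijectivity is already established in Theorem~\ref{th:Weier3} and you are only illustrating the inverse; but if you include the formula, it should be corrected.
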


Recall that minimal surfaces in $\R^3$ appear as the real part of such curves $\chi$.
The representation \eqref{Weier3} seems to have been first given by K.~Weierstrass \cite{weierstrass}; explanations are given by N.J.~Hitchin
\cite{hitchin} and A.~Small \cite{small-R3}. 
The new feature in our work is the correspondence with extended solutions for harmonic maps, 
specifically, \emph{the free Weierstrass data} $(g,\nu)$ of $\chi$ is given simply by the two entries \eqref{A-g-nu} of the matrix $A$ associated to $\chi$ by \eqref{11111-chi}, and this matrix defines an extended solution $\Phi=[A\ga_{\xi_0}]$ for a harmonic map
$M \to \O(5)$.

\subsection{Null curves in $\C^4$ and extended solutions} \label{subsec:null-R4}

Theorem \ref{th:Weier3} has an analogue in $\C^4$ as follows.  For a null curve
$\chi=(\chi_1,\chi_2,\chi_3,\chi_4):M \to \C^4$, by definition, $\chi'$ 
is not identically zero, so by permuting coordinates if necessary, we can assume that $\chi_1$ is non-constant.  Then
we can set $g_1 = \chi_2'/\chi_1'$ and $g_2= \chi_3'/\chi_1'$
so that $[\chi'] = [1,g_1,g_2,-g_1g_2]$ and $[\chi']$ is non-constant if and only if at least one of the \emph{Gauss maps} $g_1$ or $g_2$ is non-constant; again, after permuting coordinates, if necessary, we can assume that $g_1$ is non-constant.  By \emph{$A$ non-degenerate} we shall now mean that $a_{i,i+1}$ is non-constant for $i \neq 3$.
The extended solutions in (ii) below are polynomial extended solutions for harmonic maps into $\O(6)$, and, as in the $\C^3$ case, are of type $(1,1,2,1,1)$, and so of the maximum possible uniton number, $4$; the corresponding canonical element is $\xi=\ii\diag(4,3,2,2,1,0)$.

\begin{theorem} \label{th:Weier4}
The following sets are in one-to-one correspondence$:$
\begin{itemize}
\item[(i)] null meromorphic curves\/ $\chi:M \to \C^4$ with $\chi_1$ and $g_1 := \chi_2'/\chi_1'$ non-constant$;$

\item[(ii)] non-degenerate $S^1$-invariant extended solutions $\Phi:M \to \Omega_4\U(6)^{\rr}$ of type $(1,1,2,1,1);$

\item[(ii)$'$] non-degenerate solutions $A:M \to (\A_{\xi}^{\rr})_0$ to the extended solution equation \eqref{GrM}$;$

\item[(iii)] triples of meromorphic functions $(g_1,h_1,h_2)$ on $M$ with $g_1$ and
$h_1^{(1)} := h_1'/g_1'$ non-constant.
\end{itemize}

The bijection from (ii)$'$ to (ii) is given by $\Phi=[A\ga_{\xi}]$ as in
Proposition \ref{prop:BuGu-On}.

The bijection from (ii)$'$ to (i) is given by
$\chi = (a_{56},a_{46},a_{36},a_{26})$.
 
The bijection from (ii)$'$ to (iii) is given by
\be{A-e-h}
g_1 = a_{45}, \quad h_1 = a_{13}, \quad h_2 = a_{13}a_{35} - a_{15}.
\ee
\end{theorem}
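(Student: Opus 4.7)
The plan is to mirror the proof of Theorem \ref{th:Weier3}, with the matrix \eqref{11211} playing the role that \eqref{11111-chi} played there. The bijection between (ii) and (ii)$'$ is immediate from Proposition \ref{prop:BuGu-On} together with Corollary \ref{cor:one-one}, once one notes that the type $(1,1,2,1,1)$ corresponds to the canonical element $\xi = \ii\diag(4,3,2,2,1,0)$. The bijection between (ii)$'$ and (iii) is essentially the content of \S\ref{subsec:n=6}(e): formula \eqref{11211} parametrizes every non-degenerate $A \in (\Sol_{\xi}^{\rr})_0$ uniquely by a triple $(g_1, h_1, h_2)$ of meromorphic functions with $g_1$ and $h_1^{(1)}$ non-constant, while the recipe \eqref{A-e-h} reads $(g_1, h_1, h_2)$ back off from entries of $A$; in particular, $h_2 = g_2 h_1 - a_{15} = a_{13}a_{35} - a_{15}$ follows from the entries $a_{35} = g_2$ and $a_{15} = g_2 h_1 - h_2$ in \eqref{11211}.

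The substantive step is the bijection between (ii)$'$ and (i). Starting from $A$ of the form \eqref{11211}, I would set
\[
\chi := (a_{56}, a_{46}, a_{36}, a_{26}) = \bigl(h_1^{(1)},\ -h_1 + g_1 h_1^{(1)},\ h_2^{(1)},\ h_2 - g_1 h_2^{(1)} \bigr),
\]
and differentiate, using $h_i' = g_1' h_i^{(1)}$ $(i=1,2)$, to obtain
$\chi' = (h_1^{(1)})'(1, g_1, 0, 0) + (h_2^{(1)})'(0, 0, 1, -g_1)$. Since the null-basis inner product on $\C^4$ is $(v,w) = v_1 w_4 + v_2 w_3 + v_3 w_2 + v_4 w_1$, the quantity $(\chi',\chi') = 2(\chi_1' \chi_4' + \chi_2' \chi_3')$ vanishes identically. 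Moreover $\chi_1 = h_1^{(1)}$ is non-constant by non-degeneracy of $A$, and $\chi_2'/\chi_1' = g_1 = a_{45}$ is also non-constant, placing $\chi$ in set (i).

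For the inverse (i) $\to$ (ii)$'$, given $\chi$ as in (i), I would set $g_1 := \chi_2'/\chi_1'$, $h_1 := g_1 \chi_1 - \chi_2$, and $h_2 := \chi_4 + g_1 \chi_3$, then build $A$ via \eqref{11211}. A direct calculation, using $\chi_2' = g_1 \chi_1'$ and the consequence $\chi_4' = -g_1 \chi_3'$ of nullness (obtained by dividing $\chi_1'\chi_4' + \chi_2'\chi_3' = 0$ by $\chi_1'$), gives $h_1' = g_1' \chi_1$ and $h_2' = g_1' \chi_3$; hence $h_1^{(1)} = \chi_1$ and $h_2^{(1)} = \chi_3$, and substituting back into $\chi$ recovers the original data. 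The main (and really the only) subtlety --- and the defining feature of the free Weierstrass representation in this setting --- is to notice that the formulae for $(g_1, h_1, h_2)$ in terms of $\chi$ involve \emph{no integration}: this is forced by the nullness of $\chi$, which supplies the algebraic identity $\chi_4' = -g_1 \chi_3'$ needed to recover $h_2$ algebraically rather than by quadrature, so the entire correspondence is expressed purely through differentiation and rational operations.
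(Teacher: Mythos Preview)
Your proposal is correct and follows essentially the same approach as the paper. The paper's proof is terser---it constructs the matrix \eqref{11211a} directly from $\chi$ (with $g_1=\chi_2'/\chi_1'$, $g_2=\chi_3'/\chi_1'$, and the first row by algebra) rather than passing through $(g_1,h_1,h_2)$ as an intermediate step, and it leaves the verification of nullness and the inverse computations implicit---but your route through the parametrization \eqref{11211} and your explicit checks that $(\chi',\chi')=0$ and that the two assignments invert each other amount to the same argument carried out in more detail.
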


\begin{proof}
Given $\chi$ in set (i) there is a unique $A$ in set (ii)$'$ which
satisfies $\chi = (a_{56},a_{46},a_{36},a_{26})$, namely,
\be{11211a}
A = \begin{pmatrix}
1 & a_{12} & a_{13}  & a_{14} & a_{15} 	 & a_{16}  \\
0 & 1       & -g_1   & -g_2   & -g_1g_2  & \chi_4 \\
0 & 0       & 1      & 0      & g_2      & \chi_3 \\
0 & 0       & 0      & 1      & g_1      & \chi_2 \\
0 & 0       & 0      & 0      & 1        & \chi_1 \\
0 & 0       & 0      & 0      & 0        & 1 
\end{pmatrix}.
\ee
Here $g_1 = \chi_2'/\chi_1'$ and $g_2 = \chi_3'/\chi_1'$; the remaining entries $a_{1j}$ can be  found by algebra, i.e., using $(c_j,c_6)=0$ for $j=2,3,4,5,6$.
Thus $\chi \mapsto A$ gives a bijection from set (i) to set (ii)$'$ with inverse $\chi = (a_{56},a_{46},a_{36},a_{26})$.
 
Given $(g_1,h_1,h_2)$ in set (iii), set $A$ equal to \eqref{11211} where $g_2 = h_2^{(2)}/h_1^{(2)}$.  It is easily checked that this is the inverse of the map \eqref{A-e-h}.
\end{proof}

\begin{corollary}
There is a bijection between the following sets$:$
\begin{enumerate}
\item[(i)] the set of triples of meromorphic functions $(g_1,h_1,h_2)$ on $M$ with $g_1$ and $h_1^{(1)}$ non-constant,
 \item[(ii)] the set of null meromorphic curves\/ $\chi:M \to \C^4$ with $\chi_1$ and $g_1 := \chi_2'/\chi_1'$ non-constant,
\end{enumerate}
given by
\be{Weier4}
\chi = (h_1^{(1)},\, -h_1 + g_1 h_1^{(1)},\, h_2^{(1)},\, h_2 - g_1 h_2^{(1)}).
\ee
\end{corollary}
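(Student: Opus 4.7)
The plan is to derive this corollary as an immediate consequence of Theorem \ref{th:Weier4}, by composing the bijection between triples $(g_1,h_1,h_2)$ and matrices $A$ (given by the explicit formula \eqref{11211}) with the bijection between matrices $A$ and null curves $\chi$ (given by $\chi=(a_{56},a_{46},a_{36},a_{26})$). Both bijections were already established in the theorem, so no genuinely new work is required: the corollary merely repackages them in terms of the explicit free-form data $(g_1,h_1,h_2)$, bypassing the intermediate matrix and extended solution.

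Concretely, I would start with a triple $(g_1,h_1,h_2)$ satisfying the stated non-degeneracy hypotheses, form $A$ via \eqref{11211}, and simply read off the middle four entries of the last column to obtain
\[
\chi = (a_{56}, a_{46}, a_{36}, a_{26}) = (h_1^{(1)},\, -h_1 + g_1 h_1^{(1)},\, h_2^{(1)},\, h_2 - g_1 h_2^{(1)}),
\]
which is formula \eqref{Weier4}. For the inverse, given a null curve $\chi$ with $\chi_1$ and $g_1 := \chi_2'/\chi_1'$ non-constant, I would construct $A$ via \eqref{11211a} and extract $(g_1,h_1,h_2)$ using \eqref{A-e-h}, i.e. $g_1 = a_{45}$, $h_1 = a_{13}$, $h_2 = a_{13}a_{35} - a_{15}$. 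The matching of non-degeneracy conditions is straightforward: $g_1$ non-constant gives $a_{45}$ non-constant, and $h_1^{(1)}$ non-constant ensures that $g_2 = h_2^{(2)}/h_1^{(2)}$ is well-defined (as noted in the proof of Theorem \ref{th:Weier4}), which is precisely what is needed for $A$ to lie in the non-degenerate stratum.

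As a sanity check I would verify that $\chi$ is actually null, although this is automatic from $A \in \O(6,\C)$: the last column $c_6$ satisfies $(c_6, c_6) = 0$ in the null basis, where the inner product reads $(v,w) = 2v_1 w_4 + 2 v_2 w_3$. A direct computation using $h_i' = g_1' h_i^{(1)}$ gives $\chi_1' = g_1' h_1^{(2)}$, $\chi_2' = g_1 g_1' h_1^{(2)}$, $\chi_3' = g_1' h_2^{(2)}$, $\chi_4' = -g_1 g_1' h_2^{(2)}$, whence $\chi_1'\chi_4' + \chi_2'\chi_3' = 0$ by immediate cancellation. There is no real obstacle here; the only conceivable point of concern is ensuring no misalignment between the set labels of Theorem \ref{th:Weier4} and those of the corollary, which is handled by explicit identification of the two sets $(g_1,h_1,h_2)$-triples and the one-one correspondence with the null curves via the last column of $A$.
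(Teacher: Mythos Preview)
Your proposal is correct and follows exactly the approach the paper intends: the corollary is obtained by composing the bijections (iii)$\leftrightarrow$(ii)$'$ and (ii)$'\leftrightarrow$(i) of Theorem \ref{th:Weier4}, reading the formula \eqref{Weier4} directly from the last column of the matrix \eqref{11211}. The paper does not even write out a separate proof for this corollary (compare the analogous $\C^3$ corollary, introduced by ``Composing the above bijections we deduce\ldots''), so your explicit unpacking, including the direct verification that $\chi$ is null, is if anything more detailed than what the paper provides.
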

Again minimal surfaces in $\R^4$ appear as the real part of such $\chi$.
This seems to have been first given by M.~de~Montcheuil
\cite{montcheuil}, see also L.~Eisenhart \cite{eisenhart};  explanations are given by A.~Small \cite{small-R4} and W.T.~Shaw \cite{shaw}. 
As before, the free Weierstrass data $(g_1,h_1,h_2)$ of $\chi$ are given very simply by \eqref{A-e-h} from the entries of the matrix $A$
associated to $\chi$ by \eqref{11211a}, and this matrix defines an extended solution $\Phi=[A\ga_{\xi}]$ for a harmonic map
$M \to \O(6)$.

\end{document}